%% file: ex_article.tex
\documentclass[hidelinks,onefignum,onetabnum]{siamart251216}

\usepackage{subcaption}
\usepackage{booktabs}
\usepackage{bbm}

\input{ex_shared}

\ifpdf
\hypersetup{
  pdftitle={A weighted quantile filter based framework for interface optimal design problems},
  pdfauthor={Sihao Cheng, Ziming Shao, Dong Wang}
}
\fi

\begin{document}

\maketitle

\begin{abstract}
We present a robust and efficient numerical framework based on a median filter scheme for solving a broad class of interface optimal design problems, from image segmentation to topology optimization. A key innovation of our work is the extension of the binary scheme into a level-set scheme via a weighted quantile interpretation. Unlike traditional binary iterative convolution-thresholding method (ICTM), this continuous weighted quantile filter scheme effectively overcomes the pinning effect caused by spatial discretization, achieving interface evolution even with small time steps. We also provide a rigorous theoretical analysis, proving the unconditional energy stability of the iterative scheme. Furthermore, we prove that for a wide class of data fidelity terms, the convex relaxation inherently enforces a binary solution, justifying the effectiveness of the method without explicit penalization. Numerical experiments on the Chan--Vese model, the local intensity fitting (LIF) model, and topology optimization in Stokes flow demonstrate that the proposed efficient continuous framework effectively eliminates the pinning effect, guarantees unconditional energy stability, and accurately converges to binary solutions.
\end{abstract}

\begin{keywords} 
interface optimal design, median filter, level set method, threshold dynamics, topology optimization
\end{keywords}

\begin{MSCcodes}
65K10, 65M12, 49Q10, 68U10
\end{MSCcodes}

\section{Introduction}
Interface optimal design problems constitute a fundamental class of challenges in computational mathematics, permeating diverse fields ranging from computer vision to fluid mechanics. Prominent examples include the segmentation of complex structures in image processing \cite{Chan_2001, Li_2007, Ma_2021, Mumford_1989}, the tracking of immiscible multiphase fluid interfaces in hydrodynamics \cite{Hirt_1981, Jacqmin_1999, Sussman_1994}, and the optimal distribution of materials in topology optimization \cite{Allaire_2004, Bendsoe_1989, Takezawa_2010}. In these variational problems, the primary objective is to determine an optimal partition of a bounded domain $\Omega$ into distinct sub-regions, such as separating foreground from background or distinguishing solids from fluids, in order to minimize a specific, often non-convex energy functional. However, the numerical treatment of these problems causes significant challenges. The challenges stem not only from the geometric complexity of the interfaces, potentially exhibiting singularities, but also from the topological changes that occur naturally during the optimization process, such as merging and splitting \cite{Deckelnick_2005, saye2020review}. Consequently, developing numerical algorithms that are both computationally efficient and mathematically stable, while robustly handling these geometric evolutions, remains an ongoing area of research.

Traditionally, interface evolution modeling uses the level set method \cite{Osher_1988}, the phase field method \cite{Du_2020} and so on. Although these methods provide a natural framework for handling complex topological changes, they also present computational challenges. For example, the level set method often requires frequent re-initialization to maintain the properties of the signed distance function, which leads to high computational costs \cite{Hartmann_2010}. Similarly, the phase field method is governed by nonlinear high-order partial differential equations that depend on a small interface width parameter, and resolving this thin interface requires high grid resolution. Therefore, explicit schemes suffer from severe restrictions of the CFL stability condition, while implicit schemes, though more stable, require solving large nonlinear algebraic systems at each step, incurring high computational costs \cite{Li_2026, Zhang_2026}.

As an alternative, Merriman, Bence, and Osher introduced the threshold dynamics method to simulate the mean curvature flow, also known as the MBO method \cite{merriman1992diffusion, Merriman_1994}. The mathematical foundation of this approach was significantly advanced by Esedo\=glu and Otto \cite{Esedo_Lu_2014}. By introducing a novel movement limiter mechanism to decouple non-local interactions, they rigorously established the variational formulation of threshold dynamics. This formulation not only interpreted the threshold dynamics scheme as a sequence of coordinate descent steps but also provided a theory for its unconditional energy stability. The idea of diffusion generated approaches has been successfully extended to a wide range of general interface related problems, including wetting dynamics \cite{Wang_2019b, Xu_2017}, image segmentation \cite{Liu_2022, Liu_2023, Wang_2017, Wang_2022b}, topology optimization \cite{Chen_2024, Chen_2026, Chen_2022}, target-valued harmonic maps \cite{Osting_2019, Wang_2019a}, Dirichlet partition \cite{Wang_2022a}, and data clustering \cite{van_Gennip_2014}. However, MBO schemes typically suffer from the well-known pinning effect, where the interface fails to move if the time step is chosen too small relative to the mesh size, leading to superficial convergence or loss of geometric details \cite{Esedog_lu_2010, Merriman_1994}. This limitation often requires a delicate balance between spatial and temporal discretization, restricting its applicability in high-precision simulations.

In this paper, we propose a weighted quantile filter based framework to solve interface optimal design problems, inspired by the recently developed median-filter method for mean curvature flow \cite{Esedo_lu_2024}. While drawing on the efficiency of threshold dynamics, our method differs fundamentally from the traditional binary scheme. Using a weighted quantile interpretation of the heat kernel convolution, we extend the discrete thresholding step into a continuous level-set update scheme. This innovation effectively eliminates the pinning effect associated with spatial discretization while retaining the unconditional energy stability and computational efficiency of the original MBO method. In addition, we theoretically establish that, for a wide class of data fidelity terms, the proposed convex relaxation naturally enforces a binary solution, thereby recovering a clear interface without the need for explicit penalty terms. Furthermore, we demonstrate that this framework is not limited to mean curvature flow, but can be rigorously applied to minimize general energy functionals that contain complex data fidelity terms and physical constraints, such as those governed by the Stokes equations. 

The remainder of this paper is organized as follows. In Section \ref{sec:model}, we introduce the mathematical formulation of interface optimal design problems and define the associated energy functional in the space of functions of bounded variation. In Section \ref{sec:algorithm}, we derive the proposed continuous weighted quantile filter scheme. Using a weighted quantile interpretation of the heat kernel convolution, this method effectively overcomes the pinning effect inherent in traditional threshold dynamics. Section \ref{sec:theory} is dedicated to the rigorous theoretical analysis of the framework. We first prove that for a wide class of data configurations, the convex relaxation automatically enforces a binary solution, ensuring the recovery of sharp interfaces, and then establish the unconditional energy stability of the iterative scheme. Section \ref{sec:numerical_experiments} validates the versatility and robustness of the algorithm through extensive numerical experiments, ranging from image segmentation models, such as the Chan-Vese model and the local intensity fitting (LIF) model, to topology optimization in Stokes flow. Finally, concluding remarks are given in Section \ref{sec:conclusion}.

\section{Mathematical Model}
\label{sec:model}

In this section, we introduce a mathematical model for a class of interface related optimization problems. The objective is to find an optimal partition of a bounded domain $\Omega \subset \mathbbm{R}^{2}$ and the corresponding optimal parameters that characterize the sub-regions. To ensure the well-posedness of the problem, throughout this paper we assume that $\Omega$ is an open, bounded subset with a Lipschitz boundary.

Let $\Omega_{1} \subset \Omega$ represent the region of interest (e.g., the foreground in image segmentation or the fluid region in Stokes flow), and let $\Omega_{2} = \Omega \setminus \Omega_{1}$ be the background. Since we are dealing with the geometry of interfaces, the natural mathematical framework is the space of functions of bounded variation, denoted by $BV(\Omega)$. We represent the partition using the characteristic function $u = \mathbbm{1}_{\Omega_1}$. Consequently, we define the admissible set for our binary partition variable as follows.
\begin{equation} \label{eq:admissible_set}
     BV(\Omega; \{0,1\}) := \left\{ u \in BV(\Omega) : u(\textbf{x}) \in \{0, 1\} \right\}.
\end{equation}

Our goal is to find the optimal function $u \in BV(\Omega; \{0,1\})$ and the associated parameters $\mathbf{c}$ by minimizing an energy functional. This functional is composed of a fidelity term, which measures the discrepancy between the data and the model parameters within each region, and a geometric regularization term that penalizes the complexity of the interface. We formulate the general energy functional as follows:
\begin{equation} \label{eq:general_energy}
    E(u, \mathbf{c}) = \int_{\Omega} u(\textbf{x}) F_{1}(\textbf{x}, \mathbf{c}) \, d\textbf{x} + \int_{\Omega} (1-u(\textbf{x})) F_{2}(\textbf{x}, \mathbf{c}) \, d\textbf{x} + \lambda \text{Per}(\Omega_{1}).
\end{equation}
Here, $\mathbf{c}$ represents a vector of parameters (e.g., the average intensity in image segmentation \cite{Chan_2001} or the velocity fields in Stokes flow \cite{Borrvall_2002}) that, together with the function $u$, completely parameterize the energy functional. The fidelity functions $F_1, F_2$ measure the cost in each region. We assume that for any fixed $\mathbf{c}$, $F_1(\cdot, \mathbf{c})$ and $F_2(\cdot, \mathbf{c})$ belong to $L^\infty(\Omega)$, ensuring that the data fidelity terms are finite. The term $\text{Per}(\Omega_{1})$ denotes the perimeter of the interface and, weighted by a parameter $\lambda > 0$, penalizes the complexity of the interface to ensure smoothness. 

\section{Derivation of the Algorithm}
\label{sec:algorithm}

In this section, we present our efficient weighted quantile filter scheme to minimize the energy functional defined in \eqref{eq:general_energy}. 

\subsection{Approximation of the Perimeter via Heat Kernel}
\label{subsec:approximation}

Direct minimization of the energy functional in \eqref{eq:general_energy} is computationally challenging due to its non-differentiability. To overcome this, we adopt an approximation of the perimeter functional based on the heat kernel. 

According to \cite{miranda2007short}, we can approximate the boundary perimeter using the short-time heat content energy:
\begin{equation} \label{eq:heat_approx}
    \text{Per}(\Omega_1) \approx \mathcal{P}_\tau(u) := \sqrt{\frac{\pi}{\tau}} \int_\Omega (1-u)(\textbf{x}) \left(G_\tau * u\right)(\textbf{x}) \, d\textbf{x},
\end{equation}
where $\tau > 0$ is a small time-scale parameter, and $G_\tau$ is the Gaussian kernel (heat kernel) with variance $2\tau$:
\begin{equation}
    G_\tau(\textbf{x}) = \frac{1}{4\pi\tau} \exp\left( -\frac{|\textbf{x}|^2}{4\tau} \right).
\end{equation}
As $\tau \to 0$, this energy $\mathcal{P}_\tau(u)$ converges to the perimeter of $\Omega_1$ in the sense of $\Gamma$-convergence. Substituting \eqref{eq:heat_approx} into \eqref{eq:general_energy}, we obtain the approximated energy functional $E_\tau$. 
\begin{equation} \label{eq:total_approx_energy}
    E_\tau(u, \textbf{c}) = \int_{\Omega} u(\textbf{x}) F_{1}(\textbf{x}, \mathbf{c}) + (1-u(\textbf{x})) F_{2}(\textbf{x}, \mathbf{c}) \, d\textbf{x} + \lambda \sqrt{\frac{\pi}{\tau}} \int_\Omega (1-u)(\textbf{x}) \left(G_\tau * u\right)(\textbf{x}) \, d\textbf{x}.
\end{equation}
This transformation allows us to handle the geometric regularization effectively using algebraic operations involving convolutions.

\subsection{Threshold Dynamics for Characteristic Functions}
\label{subsec:threshold_dynamics}

To minimize the energy functional \eqref{eq:total_approx_energy}, we adopt a coordinate descent strategy, following the alternating iterative framework utilized in \cite{Wang_2022b}. This iterative process involves two main steps:
\begin{enumerate}
    \item \textbf{Update the parameters:} Fix the partition $u$ and update the region parameters $\mathbf{c}$ (e.g., computing the mean intensity). This step is standard and minimizes the fidelity term.
    \item \textbf{Update the characteristic function:} Fix the parameters $\mathbf{c}$ and find the optimal partition $u$. This is the critical step where the geometric evolution occurs.
\end{enumerate}
In the following derivation, we focus on the second step: minimizing $E_\tau$ with respect to $u$ while keeping $\mathbf{c}$ fixed.

To minimize $E_\tau$ with respect to $u$, we employ a linearization technique. Since the term $\mathcal{P}_\tau$ is concave, we linearize the convolution term at the current iteration $u^k$. The variation of the energy with respect to $u$ leads to the following pointwise condition for $u^{k+1}$. We set $u^{k+1}(\textbf{x}) = 1$ if the functional derivative is non-positive, and $0$ otherwise. Specifically, $u^{k+1}(\textbf{x}) = 1$ if and only if:
\begin{equation}
    F_1(\textbf{x}) - F_2(\textbf{x}) + \lambda \sqrt{\frac{\pi}{\tau}} \left( 1 - 2 (G_\tau * u^k)(\textbf{x}) \right) \leq 0.
\end{equation}
Rearranging the terms, we obtain the threshold dynamics updating rule:
\begin{equation} \label{eq:threshold_update}
    u^{k+1}(\textbf{x}) = \mathbbm{1}_{ \left\{ (G_\tau * u^k)(\textbf{x}) \geq \frac{1}{2} + \frac{\sqrt{\tau}}{2\lambda\sqrt{\pi}} \left( F_1(\textbf{x}) - F_2(\textbf{x}) \right) \right\} }(\textbf{x}).
\end{equation}

\subsection{Weighted Quantile Filter for Level Set Functions}
\label{subsec:median_filter}

While the threshold dynamics scheme derived in Section \ref{subsec:threshold_dynamics} effectively minimizes the energy for binary functions, achieving sub-grid accuracy often requires a level-set formulation. In this subsection, we extend the binary scheme to continuous functions following the framework in \cite{Esedo_lu_2024, Guo_2025}.

Before detailing the specific update rules of the operator, it is crucial to establish the mathematical justification and the practical motivation for extending the binary framework to a continuous level-set formulation. As we will rigorously prove later in Section \ref{subsec:equivalence}, the original binary energy formulation \eqref{eq:total_approx_energy} is theoretically equivalent to a relaxed, continuous level-set energy functional \eqref{eq:relaxed_energy_def} via the coarea formula. 

The primary advantage of shifting to this continuous level-set formulation is twofold. First, from a numerical perspective, allowing the function $\phi$ to take continuous values in $[0, 1]$ effectively provides sub-grid resolution. This enables the interface to undergo infinitesimal spatial shifts, successfully overcoming the well-known pinning effect that severely limits traditional binary threshold dynamics. Second, from an optimization perspective, the continuous formulation convexifies the original combinatorial problem, paving the way for unconditionally stable and efficient iterative solvers while naturally recovering the sharp binary interfaces. With this continuous energy landscape in mind, the subsequent weighted quantile filter scheme operates as an exact minimizer of this relaxed formulation.

Let $\mathcal{S}_{\tau}$ denote the operator defined by the update rule \eqref{eq:threshold_update} for a binary characteristic function $u$. A crucial observation is that $\mathcal{S}_{\tau}$ satisfies the monotonicity property:

\begin{lemma}[Monotonicity] 
\label{lem:monotonicity}
    Let $u, v \in BV(\Omega; \{0,1\})$ be two binary functions such that $u(\textbf{x}) \le v(\textbf{x})$ for almost every $\textbf{x} \in \Omega$. Then, the operator $\mathcal{S}_{\tau}$ preserves this order, i.e.,
    \begin{equation}
        (\mathcal{S}_{\tau}u)(\textbf{x}) \le (\mathcal{S}_{\tau}v)(\textbf{x}), \quad \text{for all } \textbf{x} \in \Omega.
    \end{equation}
\end{lemma}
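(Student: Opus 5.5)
The plan is to reduce the order-preservation of $\mathcal{S}_\tau$ to two facts: convolution with the Gaussian kernel preserves order, and the threshold in \eqref{eq:threshold_update} does not depend on the input function. First I fix the parameters $\mathbf{c}$, so that $F_1(\cdot,\mathbf{c})$ and $F_2(\cdot,\mathbf{c})$ are fixed functions in $L^\infty(\Omega)$. I then define the threshold function
\[
T(\textbf{x}) := \frac{1}{2} + \frac{\sqrt{\tau}}{2\lambda\sqrt{\pi}}\bigl(F_1(\textbf{x}) - F_2(\textbf{x})\bigr).
\]
With this notation, $(\mathcal{S}_\tau u)(\textbf{x}) = \mathbbm{1}_{\{G_\tau * u \ge T\}}(\textbf{x})$, and $T$ is the same whether the operator acts on $u$ or on $v$.

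\textbf{Monotonicity of the convolution.} Suppose $u \le v$ almost everywhere. Then $v-u \ge 0$ almost everywhere, and $G_\tau > 0$ pointwise. Hence for every $\textbf{x}$,
\[
(G_\tau * v)(\textbf{x}) - (G_\tau * u)(\textbf{x}) = \int G_\tau(\textbf{x}-\textbf{y})\bigl(v(\textbf{y})-u(\textbf{y})\bigr)\,d\textbf{y} \ge 0 .
\]
The convolution is defined pointwise everywhere, not merely almost everywhere: $u,v$ are bounded and $G_\tau$ is integrable, so the integral exists at each $\textbf{x}$ (taking $u,v$ extended by zero outside $\Omega$, or restricted to $\Omega$; either convention works). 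Changing $u$ or $v$ on a null set does not change the integral. This is why the conclusion holds for all $\textbf{x}$ rather than almost every $\textbf{x}$.

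\textbf{Comparison of the indicators.} Fix $\textbf{x}$. If $(\mathcal{S}_\tau u)(\textbf{x}) = 1$, then $(G_\tau * u)(\textbf{x}) \ge T(\textbf{x})$. The convolution bound gives $(G_\tau * v)(\textbf{x}) \ge (G_\tau * u)(\textbf{x}) \ge T(\textbf{x})$, so $(\mathcal{S}_\tau v)(\textbf{x}) = 1$. If instead $(\mathcal{S}_\tau u)(\textbf{x}) = 0$, the inequality is trivial because $\mathcal{S}_\tau v \ge 0$. Together the two cases give $\mathcal{S}_\tau u \le \mathcal{S}_\tau v$ everywhere.

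\textbf{Main obstacle.} The only delicate point is that $F_1 - F_2$ lies only in $L^\infty$, so $T$ is defined only up to null sets. I would handle this by fixing one representative of $T$ once and using it in both indicators; the comparison then holds pointwise for that representative. I would also note that the comparison never depends on the particular values of $T$, because it is identical on both sides.
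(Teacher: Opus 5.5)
Your proof is correct and follows essentially the same route as the paper. Both arguments use two facts: convolution with the nonnegative Gaussian kernel preserves order, and the threshold $T(\textbf{x})$ does not depend on the input, so $(\mathcal{S}_\tau u)(\textbf{x})=1$ forces $(\mathcal{S}_\tau v)(\textbf{x})=1$. Your extra remarks are sensible refinements that the paper leaves implicit: the convolution is defined at every point, which justifies the ``for all $\textbf{x}$'' conclusion, and the same representative of $T$ is used on both sides.
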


\begin{proof}
    Since the heat kernel $G_\tau$ is non-negative, the convolution operation is order-preserving. The assumption $u(\textbf{x}) \le v(\textbf{x})$ for almost every $\textbf{x} \in \Omega$ implies:
    \begin{equation}
        (G_\tau * u)(\textbf{x}) = \int_\Omega G_\tau(\textbf{x}-\textbf{y})u(\textbf{y}) \, d\textbf{y} \le \int_\Omega G_\tau(\textbf{x}-\textbf{y})v(\textbf{y}) \, d\textbf{y} = (G_\tau * v)(\textbf{x}).
    \end{equation}
    Let $T(\textbf{x}) = \frac{1}{2} + \frac{\sqrt{\tau}}{2\lambda\sqrt{\pi}} \left( F_1(\textbf{x}) - F_2(\textbf{x}) \right)$ be the local threshold. If $(\mathcal{S}_{\tau}u)(\textbf{x}) = 1$, it implies $(G_\tau * u)(\textbf{x}) \geq T(\textbf{x})$. By the inequality above, we necessarily have $(G_\tau * v)(\textbf{x}) \geq T(\textbf{x})$, which implies $(\mathcal{S}_{\tau}v)(\textbf{x}) = 1$. Since the functions are binary, this confirms that $\mathcal{S}_{\tau}u \le \mathcal{S}_{\tau}v$.
\end{proof}

This monotonicity allows us to apply the scheme to a general continuous level-set function $\phi\in[0, 1]$ by applying the operator to all its super-level sets simultaneously. For a level set function $\phi^k$, its super-level set at level $\mu$ is defined as $\mathcal{T}_{\mu}\phi^k = \{\textbf{x} \in \Omega : \phi^k(\textbf{x}) \ge \mu\}$. The updated value $\phi^{k+1}(\textbf{x})$ is determined by the highest level $\mu$ for which $\textbf{x}$ remains inside the set evolved from $\mathcal{T}_{\mu}\phi^k$. 

Mathematically, this leads to the following explicit update formula involving the heat kernel weights:
\begin{equation} \label{eq:median_update}
    \phi^{k+1}(\textbf{x}) = \sup \left\{ \{0\} \cup \left\{ \mu \in (0,1] : \int_{\{\textbf{y} : \phi^k(\textbf{y}) \ge \mu\}} G_\tau(\textbf{x}-\textbf{y}) \, d\textbf{y} \geq T(\textbf{x}) \right\} \right\}.
\end{equation}
Here, $T(\textbf{x}) = \frac{1}{2} + \frac{\sqrt{\tau}}{2\lambda\sqrt{\pi}} \left( F_1(\textbf{x}) - F_2(\textbf{x}) \right)$ is the local threshold term derived in Section \ref{subsec:threshold_dynamics}. 

The formula \eqref{eq:median_update} essentially searches for a weighted quantile of the distribution of $\phi^k$ in the neighborhood of $\textbf{x}$. Specifically, if we treat $G_\tau(\textbf{x}-\cdot)$ as a probability measure, $\phi^{k+1}(\textbf{x})$ is the value $\mu$ such that the cumulative weight of neighbors with values greater than or equal to $\mu$ exceeds the threshold $T(\textbf{x})$.

\subsection{Algorithm Description}
\label{subsec:algorithm}

Based on the weighted quantile interpretation in \eqref{eq:median_update}, the numerical implementation avoids explicit convolution and instead relies on a local sorting strategy. This corresponds to the ``Weighted Quantile Filter'' solver.

In the numerical implementation, to enhance computational efficiency, we employ a discretization approximation for the convolution step. Specifically, we restrict the support of the convolution kernel to a discrete set of points equidistant from the center $\textbf{x}$ (i.e., on a circle of fixed radius $R = \sqrt{2\tau}$). This specific choice of sampling on a 1D circle rather than within a full 2D disk is driven by a critical balance between geometric accuracy and computational efficiency. Mathematically, by Taylor expansion, the uniform median on a circle of radius $R = \sqrt{2\tau}$ yields the exact same leading-order approximation to the full 2D Gaussian weighted median over a disk \cite{Oberman_2004}. Algorithmically, the quantile filter scheme requires sorting the sampled values. Sampling on a circle reduces the number of points from an area complexity $O(M^2)$ to a boundary complexity $O(M)$, significantly accelerating the sorting process. While using a full disk might marginally reduce higher-order truncation errors, it does not alter the leading-order geometric evolution but would reduce the computational speed.

Due to the radial symmetry of the heat kernel $G_\tau$ defined in Section \ref{subsec:approximation}, its value remains constant along the circle. Consequently, the discrete weights $w_j$ at these sample points are identical, effectively reducing the convolution to a uniform average on the circle. Although this geometric approximation might alter the specific pre-factor derived in Section \ref{subsec:threshold_dynamics}, in practice, this constant factor is absorbed into the tunable weight parameter $\lambda$ and does not affect the variational nature of the model.

\begin{algorithm}[H]
\caption{Weighted Quantile Filter Scheme}
\label{alg:median_filter}
\begin{algorithmic}[1]
\STATE \textbf{Input:} Data $F_1, F_2$, parameters $\lambda, \tau$.
\STATE \textbf{Initialization:} Choose a continuous level-set function $\phi^0$.
\FOR{$k = 0$ to $K_{max}$}
    \STATE \textbf{Step 1: Update Parameters.}
    \STATE Compute region parameters $\mathbf{c}^k$ based on the current level-set function $\phi^k$.
    
    \STATE \textbf{Step 2: Quantile Filtering.}
    \STATE For each grid point $\textbf{x}$:
    \STATE \quad (a) Calculate the threshold $T(\textbf{x}) = \frac{1}{2} + \frac{\sqrt{\tau}}{2\lambda\sqrt{\pi}} \left( F_1(\textbf{x}) - F_2(\textbf{x}) \right)$.
    \STATE \quad (b) Update $\phi^{k+1}(\textbf{x}) = \text{GeneralQuantileFilter}(\phi^k, T(\textbf{x}))$.
    
    \STATE \textbf{Step 3: Check Convergence.}
    \STATE If $\|\phi^{k+1} - \phi^k\| < \epsilon$, break.
\ENDFOR
\STATE \textbf{Output:} Final level-set function $\phi^{k+1}$.
\end{algorithmic}
\end{algorithm}

\begin{remark}
    In Step 2(b), the operator $\text{GeneralQuantileFilter}(\phi, T)$ represents a class of solvers that evolve the interface according to the mean curvature flow subject to an external force $T$. In Section \ref{sec:numerical_experiments}, we will detail the specific implementations of this operator.    
\end{remark}

\section{Theoretical Analysis}
\label{sec:theory}

In this section, we establish the theoretical foundations of the proposed algorithm. We first prove the equivalence between the level-set formulation and the original binary characteristic formulation using the coarea formula. Furthermore, we prove the existence and uniqueness of the solution and justify its binary property. Next, we demonstrate the unconditional energy stability of the iterative scheme. Finally, we discuss the consistency of the scheme with the mean curvature flow and analyze its stability properties based on the monotonicity principle established in Lemma \ref{lem:monotonicity}.

\subsection{Equivalence of Level-Set and Binary Formulations}
\label{subsec:equivalence}

In this subsection, we establish the theoretical equivalence between the binary optimization problem introduced in Section \ref{sec:model} and the continuous level-set formulation. To streamline the notation and highlight the role of tunable parameters, we introduce an effective weighting parameter $\tilde{\lambda}$ that absorbs the geometric scaling factors inherent in the heat kernel approximation. We define the effective parameter $\tilde{\lambda}$ as:
\begin{equation} \label{eq:effective_lambda}
    \tilde{\lambda} := \lambda \sqrt{\frac{\pi}{\tau}}.
\end{equation}

To construct a continuous relaxation of the binary energy functional \eqref{eq:total_approx_energy}, we rely on the layer cake representation following \cite{Esedo_lu_2024}. The core rationale is that a natural way to define the energy of a continuous level-set function $\phi \in BV(\Omega; [0,1])$ is to integrate the binary energy evaluated on its super-level sets, $\mathcal{T}_\mu\phi = \{\textbf{x} \in \Omega : \phi(\textbf{x}) \ge \mu\}$, across all threshold levels $\mu \in [0, 1]$. Using this threshold decomposition, we formally define the relaxed energy. 

\begin{definition}[Relaxed Energy Functional via Layer Cake Representation]
    \label{def:relaxed_energy_functional}
    For any function $\phi \in BV(\Omega; [0,1])$ and fixed parameter $\textbf{c}$, we define the relaxed energy functional $\mathcal{E}_\tau(\phi, \textbf{c})$ as:
    \begin{equation}
    \label{eq:layer_cake_total}
    \begin{split}
    \mathcal{E}_\tau(\phi, \textbf{c}) := {} & \frac{2}{\tilde{\lambda}} \int_{0}^{1} E_{\tau}(\mathbbm{1}_{\mathcal{T}_\mu\phi}, \textbf{c}) \, d\mu \\
    = {} & 
        2 \int_0^1 \left[ \int_\Omega (1 - \mathbbm{1}_{\mathcal{T}_\mu\phi})(G_\tau * \mathbbm{1}_{\mathcal{T}_\mu\phi}) \, d\textbf{x} + \frac{1}{\tilde{\lambda}} \int_{\Omega} \mathbbm{1}_{\mathcal{T}_\mu\phi} F_1 + (1-\mathbbm{1}_{\mathcal{T}_\mu\phi}) F_2 \, d\textbf{x} \right] d\mu.
    \end{split}
    \end{equation}
    By applying the generalized coarea identity \cite{Esedo_lu_2024} to the approximate perimeter term:
    \begin{equation} \label{eq:coarea}
        2 \int_0^1 \left[ \int_\Omega (1 - \mathbbm{1}_{\mathcal{T}_\mu\phi})(G_\tau * \mathbbm{1}_{\mathcal{T}_\mu\phi}) \, d\textbf{x} \right] d\mu
        = \int_\Omega\int_\Omega G_\tau(\textbf{x}-\textbf{y})|\phi(\textbf{x})-\phi(\textbf{y})| \, d\textbf{y} \, d\textbf{x}.
    \end{equation}
    By applying Cavalieri's Principle \cite{Lieb_2001} to the data fidelity terms:
    \begin{equation} \label{eq:layer_cake}
        2 \int_0^1 \left[ \frac{1}{\tilde{\lambda}} \int_{\Omega} \mathbbm{1}_{\mathcal{T}_\mu\phi} F_1 + (1-\mathbbm{1}_{\mathcal{T}_\mu\phi}) F_2 \, d\textbf{x} \right] d\mu
        = \frac{2}{\tilde{\lambda}} \int_{\Omega}\phi F_1+(1-\phi)F_2 \, d\textbf{x}.
    \end{equation}
    Therefore, the relaxed energy functional explicitly evaluates to the following double-integral form:
    \begin{equation}
    \label{eq:relaxed_energy_def}
    \begin{split}
    \mathcal{E}_\tau(\phi, \textbf{c}) = {} & 
        \int_\Omega\int_\Omega G_\tau(\textbf{x}-\textbf{y}) 
            |\phi(\textbf{x})-\phi(\textbf{y})| \, d\textbf{y} \, d\textbf{x} \\
    & + \frac{2}{\tilde{\lambda}} \int_{\Omega} 
        \Big[ \phi(\textbf{x})F_1(\textbf{x}, \textbf{c}) 
             + (1-\phi(\textbf{x}))F_2(\textbf{x}, \textbf{c}) 
        \Big] \, d\textbf{x}.
    \end{split}
    \end{equation}
\end{definition}

\begin{remark}
    It is worth noting the relationship between the relaxed formulation \eqref{eq:layer_cake_total} and the original discrete formulation \eqref{eq:total_approx_energy} when evaluated at binary states. If the argument $\phi$ is strictly a binary characteristic function, i.e., $\phi \in BV(\Omega; \{0,1\})$, its super-level set indicator trivially satisfies $\mathbbm{1}_{\mathcal{T}_{\mu}\phi} = \phi$ for all threshold levels $\mu \in [0, 1]$. Consequently, the integral in Definition \ref{def:relaxed_energy_functional} simplifies directly to $\mathcal{E}_{\tau}(\phi, \textbf{c}) = \frac{2}{\tilde{\lambda}} E_{\tau}(\phi, \textbf{c})$. This straightforward consistency confirms that the continuous functional $\mathcal{E}_{\tau}$ acts as an exact relaxation of the discrete functional up to the global scaling constant $\frac{2}{\tilde{\lambda}}$.
\end{remark}

Inspired by \cite{Chan_2006}, we present the following equivalence theorem for our model.

\begin{theorem}[Equivalence via Threshold Decomposition]
    \label{thm:equivalence}
    Let $\phi^*$ be a global minimizer of the relaxed energy functional $\mathcal{E}_\tau(\phi, \textbf{c})$. Then, for almost every threshold level $\mu \in [0, 1]$, the characteristic function $\mathbbm{1}_{\mathcal{T}_\mu\phi^*}$ associated with the super-level set 
    \begin{equation}
        \mathcal{T}_\mu\phi^* = \{\textbf{x} \in \Omega : \phi^*(\textbf{x}) \ge \mu\}
    \end{equation}
    is a global minimizer of the original binary energy functional \eqref{eq:total_approx_energy}.
\end{theorem}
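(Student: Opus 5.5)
The argument follows the Chan--Esedo\=glu--Nikolova threshold-decomposition idea. It rests on the layer-cake identity built into Definition~\ref{def:relaxed_energy_functional}:
\begin{equation*}
\mathcal{E}_\tau(\phi,\textbf{c}) = \frac{2}{\tilde{\lambda}}\int_0^1 E_\tau(\mathbbm{1}_{\mathcal{T}_\mu\phi},\textbf{c})\,d\mu ,
\end{equation*}
which holds for every $\phi\in BV(\Omega;[0,1])$. We also use the preceding remark: on binary functions $\mathcal{E}_\tau = \frac{2}{\tilde{\lambda}}E_\tau$. Since $\tilde{\lambda}>0$, minimizing $\mathcal{E}_\tau$ over binary functions is the same problem as minimizing $E_\tau$ over binary functions.

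\textbf{Step 1.} Set $m := \inf\{E_\tau(u,\textbf{c}) : u\in BV(\Omega;\{0,1\})\}$. This is finite: $F_1,F_2\in L^\infty$, and the heat-content term is nonnegative and bounded by $|\Omega|$ because $\int G_\tau = 1$.

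\textbf{Step 2.} Show that $\min \mathcal{E}_\tau = \frac{2}{\tilde{\lambda}}m$.
\begin{itemize}
\item Upper bound. Binary functions are admissible for $\mathcal{E}_\tau$, so $\mathcal{E}_\tau(\phi^*) \le \mathcal{E}_\tau(u) = \frac{2}{\tilde{\lambda}}E_\tau(u)$ for every binary $u$. Taking the infimum over $u$ gives $\mathcal{E}_\tau(\phi^*)\le \frac{2}{\tilde{\lambda}}m$.
\item Lower bound. Each super-level set $\mathcal{T}_\mu\phi^*$ is admissible for the binary problem, so $E_\tau(\mathbbm{1}_{\mathcal{T}_\mu\phi^*})\ge m$ for every $\mu$. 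Integrating the identity above over $\mu\in[0,1]$ gives $\mathcal{E}_\tau(\phi^*)\ge \frac{2}{\tilde{\lambda}}m$.
\end{itemize}
Hence $\mathcal{E}_\tau(\phi^*) = \frac{2}{\tilde{\lambda}}m$.

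\textbf{Step 3.} From Step 2,
\begin{equation*}
\int_0^1 \bigl(E_\tau(\mathbbm{1}_{\mathcal{T}_\mu\phi^*},\textbf{c}) - m\bigr)\,d\mu = 0 .
\end{equation*}
The integrand is nonnegative and measurable. It therefore vanishes for a.e.\ $\mu$, so $E_\tau(\mathbbm{1}_{\mathcal{T}_\mu\phi^*}) = m$ for a.e.\ $\mu$. Each such super-level set attains the infimum, which makes its indicator a global minimizer of \eqref{eq:total_approx_energy}.

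\textbf{Main obstacle: measurability and admissibility.}
\begin{itemize}
\item Measurability of $\mu\mapsto E_\tau(\mathbbm{1}_{\mathcal{T}_\mu\phi^*})$ in $\mu$. This follows by Fubini/Tonelli on $\Omega\times\Omega\times[0,1]$, since $(\textbf{x},\mu)\mapsto\mathbbm{1}_{\{\phi^*(\textbf{x})\ge\mu\}}$ is jointly measurable. The same argument justifies the coarea and Cavalieri identities already invoked in the definition.
\item Admissibility of the super-level sets. One needs $\mathcal{T}_\mu\phi^*$ to have finite perimeter for a.e.\ $\mu$. This holds by the $BV$ coarea formula, since $\phi^*\in BV$.
\end{itemize}
With these two points settled, the infimum $m$ is actually attained. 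This also shows that minimizers of the binary problem exist whenever $\phi^*$ does.
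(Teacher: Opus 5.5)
Your proposal is correct and follows the paper's proof essentially step for step: you obtain the two-sided bound $\mathcal{E}_\tau(\phi^*) = \frac{2}{\tilde{\lambda}}m$ from the layer-cake identity and admissibility of binary functions, then conclude that the nonnegative integrand $E_\tau(\mathbbm{1}_{\mathcal{T}_\mu\phi^*},\textbf{c}) - m$ vanishes for a.e.\ $\mu$. Your remarks on measurability in $\mu$ and on a.e.\ finite perimeter of the super-level sets via the $BV$ coarea formula are small refinements the paper glosses over; with them, the lower bound in Step 2 holds for a.e.\ $\mu$ rather than every $\mu$, which is all the integration needs.
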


\begin{proof}
    By Definition \ref{def:relaxed_energy_functional}, the relaxed energy is exactly the normalized integration of the binary energy evaluated on its super-level sets:
    \[
    \mathcal{E}_{\tau}(\phi^{*}, \textbf{c}) = \frac{2}{\tilde{\lambda}} \int_{0}^{1} E_{\tau}\left(\mathbbm{1}_{\mathcal{T}_{\mu}\phi^{*}}, \textbf{c}\right) \, d\mu .
    \]
    Let 
    \[
    m := \inf_{u \in BV(\Omega; \{0,1\})} E_{\tau}(u, \textbf{c})
    \]
    be the global minimum of the binary problem. Since $\mathbbm{1}_{\mathcal{T}_{\mu}\phi^{*}}$ is a binary function, we necessarily have $E_{\tau}(\mathbbm{1}_{\mathcal{T}_{\mu}\phi^{*}}, \textbf{c}) \ge m$ for all $\mu \in [0, 1]$. Hence, 
    \[
    \mathcal{E}_{\tau}(\phi^{*}, \textbf{c}) \ge \frac{2}{\tilde{\lambda}} m .
    \]
    On the other hand, since $\phi^{*}$ is a global minimizer of $\mathcal{E}_{\tau}$, for any binary function $u$ we have $\mathcal{E}_{\tau}(\phi^{*}, \textbf{c}) \le \mathcal{E}_{\tau}(u, \textbf{c}) = \frac{2}{\tilde{\lambda}} E_{\tau}(u, \textbf{c})$. Taking the infimum over all binary $u$ gives
    \[
    \mathcal{E}_{\tau}(\phi^{*}, \textbf{c}) \le \frac{2}{\tilde{\lambda}} m .
    \]
    Therefore, 
    \[
    \mathcal{E}_{\tau}(\phi^{*}, \textbf{c}) = \frac{2}{\tilde{\lambda}} m .
    \]
    
    Now suppose, for contradiction, that there exists a set of positive measure in $\mu \in [0,1]$ such that $E_{\tau}(\mathbbm{1}_{\mathcal{T}_{\mu}\phi^{*}}, \textbf{c}) > m$, it would strictly imply $\mathcal{E}_{\tau}(\phi^{*}, \textbf{c}) > \frac{2}{\tilde{\lambda}} m$. Therefore,
    \[
    E_{\tau}(\mathbbm{1}_{\mathcal{T}_{\mu}\phi^{*}}, \textbf{c}) = m
    \]
    must hold for almost every $\mu \in [0, 1]$, proving that the super-level sets are indeed global minimizers of the binary energy.
\end{proof}

\begin{remark}
    In Section \ref{subsec:equivalence}, the parameter \textbf{c} is considered fixed. For notational simplicity, we will omit \textbf{c}.
\end{remark}

While Theorem \ref{thm:equivalence} ensures that the super level sets of any minimizer are optimal for the binary problem, it does not explicitly guarantee that the minimizer $\phi^*$ itself is binary. In the following, we demonstrate that for generic data, the convex relaxation inherently enforces the binary constraint $\phi \in \{0,1\}$. This justifies the use of the convex model without requiring explicit penalization terms.

To formalize this, we consider the space $X = L^1(\Omega)$ and the admissible set:
\begin{equation}
    \mathcal{K} = \left\{ \phi \in BV(\Omega) : 0 \le \phi(\textbf{x}) \le 1 \text{ a.e., and } |D\phi|(\Omega) \le C \right\},
\end{equation}
where $|D\phi|(\Omega)$ is called the variation of $\phi$ in $\Omega$. Note that the set $\mathcal{K}$ is uniformly bounded in the $BV(\Omega)$ norm. Since the embedding of $BV(\Omega)$ into $L^1(\Omega)$ is compact \cite{Ambrosio_2000}, any bounded subset of $BV(\Omega)$ is relatively compact in $L^1(\Omega)$. Therefore, $\mathcal{K}$ is relatively compact in $L^1(\Omega)$. Furthermore, since $\mathcal{K}$ is closed in $L^1$, it becomes a compact subset of $X$.

Define the interaction functional $J(\phi) = \int_\Omega\int_\Omega G_\tau(\textbf{x}-\textbf{y})|\phi(\textbf{x})-\phi(\textbf{y})| \, d\textbf{y} \, d\textbf{x}$. Note that $J$ is convex thus lower semicontinuous. The total energy is, up to an additive constant independent of $\phi$, a linear perturbation of $J$ by the data field $h(\textbf{x}) = \frac{2}{\tilde{\lambda}}(F_1(\textbf{x}) - F_2(\textbf{x}))$:
\begin{equation}
\mathcal{E}_{\tau}(\phi) = J(\phi) + \int_{\Omega} \phi(\textbf{x}) h(\textbf{x}) \, d\textbf{x}.
\end{equation}

\begin{lemma}[Existence and Uniqueness] \label{lem:existence_uniqueness}
    Let $X$ be a Banach space, and let $\mathcal{K} \subset X$ be a compact, convex subset. 
    Let $J: \mathcal{K} \to \mathbbm{R}$ be a lower-bounded lower semicontinuous functional. 
    Consider the perturbed functional $\mathcal{E}_{\tau}(\phi) = J(\phi) + \langle h, \phi \rangle$ defined on $\mathcal{K}$. 
    Then there exists a dense $G_\delta$ subset $\mathcal{G}$ of $X^*$ such that for every perturbation $h \in \mathcal{G}$, the functional $\mathcal{E}_{\tau}$ admits a unique global minimizer $\phi^*$ in $\mathcal{K}$.
\end{lemma}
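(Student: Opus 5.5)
The plan is a generic-uniqueness argument of Stegall type. Let $R := \sup_{\phi \in \mathcal{K}} \|\phi\|_X < \infty$, which is finite since $\mathcal{K}$ is compact. For $h \in X^*$ and $\varepsilon > 0$, write
\[
\mathcal{M}_\varepsilon(h) := \{ \phi \in \mathcal{K} : J(\phi) + \langle h, \phi\rangle \le m(h) + \varepsilon \}, \qquad m(h) := \inf_{\mathcal{K}} \big( J + \langle h, \cdot \rangle \big).
\]

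First, existence for every $h$. The functional $\phi \mapsto J(\phi) + \langle h,\phi\rangle$ is lower semicontinuous on the compact set $\mathcal{K}$, because $J$ is l.s.c. and $\langle h,\cdot\rangle$ is continuous. It is bounded below because $J$ is lower bounded and $|\langle h,\phi\rangle| \le \|h\| R$. Hence a minimizer exists, so $m(h)$ is finite and attained. Moreover $m$ is concave and $R$-Lipschitz on $X^*$, being an infimum of affine functions of $h$ whose slopes have norm at most $R$.

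For uniqueness I will define, for each $n \ge 1$,
\[
U_n := \{ h \in X^* : \exists\, \varepsilon > 0 \text{ with } \operatorname{diam}\mathcal{M}_\varepsilon(h) < 1/n \},
\]
and set $\mathcal{G} := \bigcap_n U_n$. If $h \in \mathcal{G}$, then the minimizer set $\mathcal{M}_0(h)$ is nonempty and has diameter smaller than every $1/n$, so it is a single point. This gives uniqueness, in fact in the strong sense that every minimizing sequence converges. It remains to show that each $U_n$ is open and dense, after which the Baire category theorem in the Banach space $X^*$ shows that $\mathcal{G}$ is a dense $G_\delta$.

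Openness is a routine perturbation estimate. Take $h \in U_n$ with parameter $\varepsilon$, and let $\|h'-h\| < \varepsilon/(4R)$. Then the objectives for $h$ and $h'$ differ by at most $\varepsilon/4$ uniformly on $\mathcal{K}$, and so $|m(h) - m(h')| \le \varepsilon/4$. Consequently $\mathcal{M}_{\varepsilon/2}(h') \subset \mathcal{M}_\varepsilon(h)$, which has diameter less than $1/n$, so $h' \in U_n$.

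The main obstacle is density. Here I would follow Stegall's variational principle, whose key geometric input is that a compact convex set is dentable, i.e.\ it has the Radon--Nikodým property. Fix $h_0 \in X^*$ and $\delta > 0$. The set $\mathcal{M}_0(h_0)$ is nonempty and compact; let $C$ be its closed convex hull, which is compact and convex inside $\mathcal{K}$.
\begin{enumerate}
\item By dentability, and in the compact setting by taking an exposed point (Straszewicz/Klee), there are $x^* \in X^*$ with $\|x^*\| < \delta$ and $\alpha > 0$ such that the slice $S := \{\phi \in C : \langle x^*,\phi\rangle < \min_C x^* + \alpha\}$ has diameter less than $1/(2n)$.
\item Set $h = h_0 + x^*$. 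The minimizers of $J + \langle h,\cdot\rangle$ should then lie near $S$. To see this, argue by contradiction with a sequence $\varepsilon_k \to 0$ and compactness of $\mathcal{K}$: near-minimizers of the perturbed problem accumulate at minimizers of $h_0$ that also minimize $x^*$ over $\mathcal{M}_0(h_0)$. This would show $\operatorname{diam}\mathcal{M}_\varepsilon(h) < 1/n$ for small $\varepsilon$, hence $h \in U_n$.
\end{enumerate}
The delicate point is that $J$ is not assumed convex, so $\mathcal{M}_0(h_0)$ need not be convex. The slice must therefore be taken relative to the functional values and not merely to the set $C$. If that direct approach gets messy, I would instead run Stegall's iterative construction: successively shrink $\delta$-slices of $\mathcal{K}$ restricted to sublevel sets of $J$, and take the limit of the perturbations, which converges in $X^*$. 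This is exactly the argument proving that a lower-bounded l.s.c.\ function on a closed bounded convex RNP set admits a dense $G_\delta$ of linear perturbations attaining a strong minimum. Compactness of $\mathcal{K}$ guarantees the RNP hypothesis, so in the write-up one may equally cite Stegall's theorem directly once this hypothesis is verified.
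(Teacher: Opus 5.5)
Your proposal is correct and ends up on the same route as the paper: compactness of $\mathcal{K}$ makes every subset dentable (Rieffel), Stegall's variational principle then gives a dense $G_\delta$ of $h\in X^*$ for which $J+\langle h,\cdot\rangle$ attains a strong minimum, and strong minimizers are unique. Your sets $U_n$, the openness estimate and the Baire step only unpack what Stegall's theorem already packages.

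If you keep the direct density sketch, fix step 2. With $h=h_0+x^*$ held fixed, near-minimizers need not accumulate at $\mathcal{M}_0(h_0)$: a competitor whose value exceeds $m(h_0)$ by less than the size of the perturbation can become the new minimizer. Instead take $h=h_0+t\,x^*$, let $t\to 0^+$, and choose $\varepsilon=o(t)$. Cluster points then lie in $\arg\min_{\mathcal{M}_0(h_0)} x^*\subset\arg\min_C x^*\subset S$. This inclusion also shows that the non-convexity of $J$, which you flagged as the delicate point, causes no difficulty.
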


\begin{proof}
    See Appendix \ref{app:existence_uniqueness}.
\end{proof}

\begin{theorem}[Generic Binary Enforcement]
    \label{thm:binary_enforcement}
    Let $\mathcal{K}$ be the compact convex set defined above, and let $\mathcal{E}_{\tau}(\phi) = J(\phi) + \int_{\Omega} \phi(\mathbf{x}) h(\mathbf{x}) \, d\mathbf{x}$ be the energy functional defined on $\mathcal{K}$. Then there exists a dense $G_\delta$ subset $\mathcal{G}$ of $L^\infty(\Omega)$ such that for every $h \in \mathcal{G}$, the functional $\mathcal{E}_{\tau}$ has a unique minimizer $\phi^*$ in $\mathcal{K}$. Furthermore, if $\mathbbm{1}_{\mathcal{T}_\mu\phi^*} \in \mathcal{K}$ for almost every $\mu \in (0,1)$, then $\phi^*(\mathbf{x}) \in \{0,1\}$ almost everywhere.
\end{theorem}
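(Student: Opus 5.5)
The argument has two parts. First we obtain the generic uniqueness statement from Lemma \ref{lem:existence_uniqueness}. Then we show that uniqueness combined with the layer-cake structure forces the minimizer to be binary.

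For the first part, take $X = L^1(\Omega)$, so that $X^* = L^\infty(\Omega)$ and the pairing $\langle h,\phi\rangle = \int_\Omega \phi h\,d\mathbf{x}$ is exactly the linear term in $\mathcal{E}_\tau$. The set $\mathcal{K}$ has already been shown to be compact in $L^1$. It is convex because the pointwise bounds $0\le\phi\le1$ are preserved under convex combinations and total variation is convex. The functional $J$ is convex and lower semicontinuous, as noted above. It is also bounded below by $0$; indeed it is even Lipschitz on $L^1$, since $|J(\phi)-J(\psi)|\le 2\|G_\tau\|_{L^1}\|\phi-\psi\|_{L^1}$. All hypotheses of Lemma \ref{lem:existence_uniqueness} therefore hold. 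The lemma produces a dense $G_\delta$ set $\mathcal{G}\subset L^\infty(\Omega)$ such that for each $h\in\mathcal{G}$ the functional $\mathcal{E}_\tau$ has a unique minimizer $\phi^*\in\mathcal{K}$.

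For the second part, fix $h\in\mathcal{G}$ and let $\phi^*$ be the unique minimizer. Assume $\mathbbm{1}_{\mathcal{T}_\mu\phi^*}\in\mathcal{K}$ for a.e. $\mu$. We run the argument of Theorem \ref{thm:equivalence}, with the minimization now restricted to $\mathcal{K}$. By the coarea identity \eqref{eq:coarea} and Cavalieri's principle \eqref{eq:layer_cake},
\[
\mathcal{E}_\tau(\phi^*) = \int_0^1 \mathcal{E}_\tau(\mathbbm{1}_{\mathcal{T}_\mu\phi^*})\,d\mu .
\]
This identity holds for general $\phi$ and is equivalent to Definition \ref{def:relaxed_energy_functional}, since $\mathcal{E}_\tau$ on a binary function equals $\frac{2}{\tilde\lambda}E_\tau$.

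Each $\mathbbm{1}_{\mathcal{T}_\mu\phi^*}$ lies in $\mathcal{K}$ for a.e. $\mu$, so $\mathcal{E}_\tau(\mathbbm{1}_{\mathcal{T}_\mu\phi^*})\ge \mathcal{E}_\tau(\phi^*)$ for a.e. $\mu$. Combined with the integral identity, this forces $\mathcal{E}_\tau(\mathbbm{1}_{\mathcal{T}_\mu\phi^*})=\mathcal{E}_\tau(\phi^*)$ for a.e. $\mu\in(0,1)$. Hence almost every super-level-set indicator is itself a minimizer in $\mathcal{K}$. Uniqueness then gives $\mathbbm{1}_{\mathcal{T}_\mu\phi^*}=\phi^*$ in $L^1$ for a.e. $\mu\in(0,1)$. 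In particular $\phi^*$ coincides a.e. with a characteristic function, so $\phi^*(\mathbf{x})\in\{0,1\}$ almost everywhere.

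To make this last step airtight, we pick two admissible levels $\mu_1<\mu_2$ in $(0,1)$. The equalities $\mathbbm{1}_{\{\phi^*\ge\mu_1\}}=\phi^*=\mathbbm{1}_{\{\phi^*\ge\mu_2\}}$ a.e. imply that the set $\{\mu_1\le\phi^*<\mu_2\}$ is null. Letting $\mu_1\downarrow0$ and $\mu_2\uparrow1$ along admissible levels shows $|\{0<\phi^*<1\}|=0$.

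The main obstacle is the middle step: the layer-cake identity must be justified as an exact equality of integrals, including measurability of $\mu\mapsto\mathcal{E}_\tau(\mathbbm{1}_{\mathcal{T}_\mu\phi^*})$. That identity is supplied by \eqref{eq:coarea} and \eqref{eq:layer_cake}, via Fubini applied to $|\phi(\mathbf{x})-\phi(\mathbf{y})|=\int_0^1|\mathbbm{1}_{\phi(\mathbf{x})\ge\mu}-\mathbbm{1}_{\phi(\mathbf{y})\ge\mu}|\,d\mu$. The argument also needs the hypothesis that the super-level sets stay in $\mathcal{K}$, i.e. that their perimeters are at most $C$, which is exactly why the theorem assumes it. Once the identity holds, the "average of quantities each $\ge$ the minimum equals the minimum" argument together with uniqueness closes the proof.
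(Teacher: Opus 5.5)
Your proof is correct and follows the paper's route. You invoke Lemma~\ref{lem:existence_uniqueness} with $X=L^1(\Omega)$ and $X^*=L^\infty(\Omega)$, use the layer-cake identity to show that almost every super-level indicator $\mathbbm{1}_{\mathcal{T}_\mu\phi^*}$ is also a minimizer in $\mathcal{K}$, and conclude from uniqueness that $\phi^*$ is binary; the differences are cosmetic: you compare directly with $\mathcal{E}_\tau(\phi^*)$ rather than with the binary infimum $m$, you argue directly rather than by contradiction, and your closing $\mu_1<\mu_2$ step is redundant, since a single admissible level already gives $\phi^*=\mathbbm{1}_{\mathcal{T}_\mu\phi^*}$ a.e.
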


\begin{proof}
    By Lemma \ref{lem:existence_uniqueness}, there exists a dense $G_\delta$ subset $\mathcal{G}$ of $L^\infty(\Omega)$ such that for every $h \in \mathcal{G}$, the functional $\mathcal{E}_{\tau}$ has a unique minimizer $\phi^*$ in $\mathcal{K}$. This establishes the uniqueness statement. It remains to show that $\phi^*$ is binary. To simplify the notation, let $\tilde{\mathcal{E}}_{\tau} = \frac{\tilde{\lambda}}{2} \mathcal{E}_{\tau}$, which clearly shares the same unique minimizer $\phi^*$.
    
    Suppose, for contradiction, that $\phi^*$ is not binary almost everywhere. Then the set $A = \{x \in \Omega: 0 < \phi^*(x) < 1\}$ has positive measure. From \eqref{eq:layer_cake_total}, the normalized relaxed energy of $\phi^*$ can be expressed as the integral of the binary energy of its super-level sets:
    $$ \tilde{\mathcal{E}}_{\tau}(\phi^*) = \int_0^1 E_{\tau}(\mathbbm{1}_{\mathcal{T}_\mu\phi^*}) d\mu. $$
    
    Let $m = \inf_{u \in BV(\Omega; \{0,1\} ) \cap \mathcal{K}} E_{\tau}(u)$. Since the super-level set indicator $\mathbbm{1}_{\mathcal{T}_\mu\phi^*}$ is a binary function in $\mathcal{K}$, we have $E_{\tau}(\mathbbm{1}_{\mathcal{T}_\mu\phi^*}) \ge m$ for all $\mu \in (0,1)$. Integrating this inequality yields $\tilde{\mathcal{E}}_{\tau}(\phi^*) \ge m$.
    
    Conversely, for any binary function $v \in BV(\Omega; \{0,1\}) \cap \mathcal{K}$, the normalized relaxed energy coincides with the binary energy, meaning $\tilde{\mathcal{E}}_{\tau}(v) = E_{\tau}(v)$. Since $\phi^*$ is the global minimizer of $\tilde{\mathcal{E}}_{\tau}$ over the entire relaxed set $\mathcal{K}$, it must hold that $\tilde{\mathcal{E}}_{\tau}(\phi^*) \le \tilde{\mathcal{E}}_{\tau}(v) = E_{\tau}(v)$ for all binary functions $v$. Taking the infimum over all such $v$ yields $\tilde{\mathcal{E}}_{\tau}(\phi^*) \le m$.
    
    Combining these inequalities, we conclude that $\tilde{\mathcal{E}}_{\tau}(\phi^*) = m$. For the integral of $E_{\tau}(\mathbbm{1}_{\mathcal{T}_\mu\phi^*})$ to exactly equal $m$ while the integrand is bounded below by $m$, it must be that $E_{\tau}(\mathbbm{1}_{\mathcal{T}_\mu\phi^*}) = m$ for almost every $\mu \in (0,1)$.
    
    Now, fix a $\mu \in (0,1)$ such that the characteristic function $\psi_\mu = \mathbbm{1}_{\mathcal{T}_\mu\phi^*}$ achieves this minimum, meaning $E_{\tau}(\psi_\mu) = m$. Because $\psi_\mu$ is a binary function, its normalized relaxed energy is exactly its binary energy: 
    $$ \tilde{\mathcal{E}}_{\tau}(\psi_\mu) = E_{\tau}(\psi_\mu) = m = \tilde{\mathcal{E}}_{\tau}(\phi^*). $$
    
    Thus, $\psi_\mu$ attains the same global minimum value as $\phi^*$, making $\psi_\mu$ also a global minimizer of the normalized relaxed energy $\tilde{\mathcal{E}}_{\tau}$ as well as the original functional $\mathcal{E}_{\tau}$. However, since $\phi^*$ takes values strictly between $0$ and $1$ on the set $A$ of positive measure, while $\psi_\mu$ takes only values $0$ or $1$, we have $\psi_\mu \neq \phi^*$. This yields two distinct global minimizers for $\mathcal{E}_{\tau}$, which directly contradicts the uniqueness guaranteed by the uniqueness statement. Therefore, $\phi^*$ must be binary almost everywhere.
\end{proof}

\begin{remark}
    The conditional assumption $\mathbbm{1}_{\mathcal{T}_\mu\phi^*} \in \mathcal{K}$, i.e., $|D(\mathbbm{1}_{\mathcal{T}_\mu\phi^*})|(\Omega) \le C$, is well-justified in practical interface optimal design problems. For a physical system where the true optimal binary partition has a finite perimeter, one can easily choose the regularization parameter $C$ to be sufficiently large. In this regime, the bounded variation constraint acts strictly as an inactive constraint near the physical solution, naturally guaranteeing that the level-set perimeters remain strictly bounded by $C$.
\end{remark}

\subsection{Derivation of the Scheme and Energy Stability}
\label{subsec:scheme_derivation}

In this subsection, we derive the explicit update rule of our quantile filter algorithm and strictly prove its unconditional energy stability. The primary challenge lies in the non-local interaction term involving $|\phi(\textbf{x})-\phi(\textbf{y})|$, which couples the values of the level-set function across the domain. To decouple these interactions and derive an efficient pointwise solver, we adopt the movement limiter approach proposed in \cite{Esedo_lu_2024}.

\subsubsection{Derivation of the Iterative Scheme}
\label{subsubsec:derivation}

The non-local interaction term in $\mathcal{E}_\tau$ couples $\phi(\mathbf{x})$ and $\phi(\mathbf{y})$, making direct minimization computationally expensive. To decouple these interactions and derive an efficient pointwise solver, we introduce an auxiliary functional $\mathcal{M}_k(\phi)$ proposed in \cite{Esedo_lu_2024}, referred to as the movement limiter. The motivation of $\mathcal{M}_k(\phi)$ is to exactly cancel the computationally expensive coupled term $|\phi(\mathbf{x}) - \phi(\mathbf{y})|$ using the known previous state $\phi^k$. Furthermore, $\mathcal{M}_k(\phi) \ge 0$ and $\mathcal{M}_k(\phi^k) = 0$, therefore it prevents drastic changes in a single step, which rigorously ensures monotonic energy decay.

With this rationale, we define the movement limiter formally as follows:

\begin{definition}[Movement Limiter]
    \label{def:movement_limiter}
    Let $\phi^k$ be the solution in iteration $k$. Define the movement limiter $\mathcal{M}_k(\phi)$ as:
    \begin{equation}
        \label{eq:movement_limiter_def}
        \mathcal{M}_k(\phi) = \int_\Omega \int_\Omega G_\tau(\mathbf{x} - \mathbf{y}) \left( 2|\phi(\mathbf{x}) - \phi^k(\mathbf{y})| - |\phi(\mathbf{x}) - \phi(\mathbf{y})| - |\phi^k(\mathbf{x}) - \phi^k(\mathbf{y})| \right) d\mathbf{y} d\mathbf{x}.
    \end{equation}
\end{definition}

Instead of directly minimizing $\mathcal{E}_\tau(\phi, \textbf{c})$, we minimize alternative energy $\mathcal{E}_\tau(\phi, \textbf{c}) + \mathcal{M}_k(\phi)$. The following theorem shows that the global minimizer of this alternative energy is exactly the quantile filter scheme presented in Algorithm \ref{alg:median_filter}.

\begin{theorem}[Exact Iterative Scheme]
    The global minimizer of the alternative problem
    \begin{equation} \label{eq:alternative_min}
        \phi^{k+1} = \arg\min_{\phi \in BV(\Omega; [0,1])} \left( \mathcal{E}_\tau(\phi, \textbf{c}) + \mathcal{M}_k(\phi) \right)
    \end{equation}
    is given explicitly by the weighted quantile formula \eqref{eq:median_update}.
\end{theorem}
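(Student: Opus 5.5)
The plan is to show that adding the movement limiter turns the coupled problem \eqref{eq:alternative_min} into a family of independent scalar convex problems, one for each $\mathbf{x}$, and that each of these is minimized by a weighted quantile of $\phi^k$. For the kernel to have unit mass I work, as is implicit in the formula \eqref{eq:median_update}, with $\Omega$ periodic or with $G_\tau(\mathbf{x}-\cdot)$ having unit mass on $\Omega$. Otherwise the quantile must be read with the mass-deficit convention, and I would record that convention explicitly.

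\textbf{Step 1 (cancellation).} Add $\mathcal{M}_k(\phi)$ from Definition \ref{def:movement_limiter} to the double-integral form \eqref{eq:relaxed_energy_def}. The term $-\int\!\!\int G_\tau|\phi(\mathbf{x})-\phi(\mathbf{y})|$ in $\mathcal{M}_k$ cancels $J(\phi)$ exactly, and $-J(\phi^k)$ is a constant. This leaves
\begin{equation*}
\mathcal{E}_\tau(\phi)+\mathcal{M}_k(\phi)=\int_\Omega \Psi_{\mathbf{x}}(\phi(\mathbf{x}))\,d\mathbf{x} + \text{const},
\end{equation*}
with
\begin{equation*}
\Psi_{\mathbf{x}}(s)=2\int_\Omega G_\tau(\mathbf{x}-\mathbf{y})|s-\phi^k(\mathbf{y})|\,d\mathbf{y}+\frac{2}{\tilde\lambda}\big(sF_1(\mathbf{x})+(1-s)F_2(\mathbf{x})\big).
\end{equation*}
Hence minimizing over $\phi$ with values in $[0,1]$ reduces to minimizing $\Psi_{\mathbf{x}}$ over $s\in[0,1]$ for a.e.\ $\mathbf{x}$. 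One measurability check is needed: the pointwise minimizer, defined via the sup in \eqref{eq:median_update}, must be measurable in $\mathbf{x}$, and it must lie in the admissible class (or one accepts the minimization over $L^\infty(\Omega;[0,1])$). I will address this by writing it as a countable supremum over rational $\mu$, using monotonicity in $\mu$.

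\textbf{Step 2 (scalar problem).} The function $\Psi_{\mathbf{x}}$ is convex in $s$, since it is a mixture of the convex functions $|s-a|$ plus a linear term. Writing $W(s)=\int_{\{\phi^k\ge s\}}G_\tau(\mathbf{x}-\mathbf{y})\,d\mathbf{y}$, which is nonincreasing and left-continuous, the left derivative of $\Psi_{\mathbf{x}}$ is $2(1-2W(s))+\frac{2}{\tilde\lambda}(F_1-F_2)$ and the right derivative is $2\big(1-2W(s^+)\big)+\frac{2}{\tilde\lambda}(F_1-F_2)$. Requiring the left derivative to be $\le 0$ is the condition $W(s)\ge \frac12+\frac{1}{2\tilde\lambda}(F_1-F_2)$. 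Substituting $\tilde\lambda=\lambda\sqrt{\pi/\tau}$ from \eqref{eq:effective_lambda} shows that this threshold equals $T(\mathbf{x})$.

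\textbf{Step 3 (identify the minimizer).} By convexity, $s^*=\sup\{\{0\}\cup\{\mu\in(0,1]: W(\mu)\ge T(\mathbf{x})\}\}$ satisfies the subgradient optimality condition, clipped to the interval $[0,1]$. I will check three cases separately. If $T\le 0$, the set contains all of $(0,1]$ and $s^*=1$. If $T>1$, the set is empty and $s^*=0$. In the interior case, left-continuity of $W$ gives $W(s^*)\ge T$, so the left derivative is $\le0$, while $W(s^{*+})\le T$, so the right derivative is $\ge 0$. This recovers exactly \eqref{eq:median_update}.

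The main obstacle I expect is the case of flat pieces, where $W$ jumps or is constant across the level $T$, for instance when $\phi^k$ has level sets of positive measure. There the scalar minimizer is an interval, so ``the'' global minimizer is not unique. I will handle this by showing that the supremum formula always selects a minimizer (the largest one), and I will phrase the theorem's conclusion as ``\eqref{eq:median_update} is a global minimizer, and it is the unique one whenever $\{\mathbf{y}:\phi^k(\mathbf{y})=s^*\}$ is null and $W$ crosses $T$ transversally.''
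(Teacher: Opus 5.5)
Your proposal is correct and follows essentially the same route as the paper's proof. The movement limiter cancels the coupled term, the problem decouples into convex scalar problems $\Psi_{\mathbf{x}}$, and the minimizer is the supremum of the set where the one-sided derivative is non-positive; your left-continuous $W$ on closed super-level sets, the measurability check and the explicit unit-mass assumption are refinements of points the paper passes over. One correction to your side remark on non-uniqueness: a jump of $W$ strictly across $T(\mathbf{x})$ (a level set $\{\phi^k=s^*\}$ of positive measure) produces a kink in $\Psi_{\mathbf{x}}$ and hence a \emph{unique} minimizer, whereas the minimizer set is a nondegenerate interval only when $W\equiv T(\mathbf{x})$ on an interval (e.g.\ binary $\phi^k$ with $(G_\tau*\phi^k)(\mathbf{x})=T(\mathbf{x})$), so the nullity hypothesis in your uniqueness statement is unnecessary.
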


\begin{proof}
    First, we simplify the alternative energy. Substituting \eqref{eq:relaxed_energy_def} and \eqref{eq:movement_limiter_def} into \eqref{eq:alternative_min}, the coupled terms cancel exactly. Discarding terms independent of $\phi$ (which do not affect the minimization), the problem reduces to:
    \begin{equation}
    \begin{split}
    \phi^{k+1} = \arg\min_{\phi \in BV(\Omega; [0,1])} \biggl[ & 2 \int_\Omega\int_\Omega G_\tau(\textbf{x}-\textbf{y})|\phi(\textbf{x})-\phi^k(\textbf{y})| \, d\textbf{y} \, d\textbf{x} \\
    & + \frac{2}{\tilde{\lambda}} \int_{\Omega}\left(\phi F_1 + (1-\phi) F_2\right) \, d\textbf{x} \biggr].
    \end{split}
    \end{equation}
    Dividing by 2, we observe that the integrand does not contain any spatial derivatives of $\phi$. Thus, the global minimization decouples into independent pointwise minimization problems. For each $\textbf{x} \in \Omega$, we find the value $\xi = \phi^{k+1}(\textbf{x}) \in [0,1]$ that minimizes the local potential $J_\textbf{x}(\xi)$:
    \begin{equation}
        J_\textbf{x}(\xi) = \int_\Omega G_\tau(\textbf{x}-\textbf{y})|\xi-\phi^k(\textbf{y})| \, d\textbf{y} + \frac{1}{\tilde{\lambda}} \left( \xi F_1(\textbf{x}) + (1-\xi) F_2(\textbf{x}) \right).
    \end{equation}
    Since $J_\textbf{x}(\xi)$ is a convex function of $\xi$, the optimal value is determined by the first-order optimality condition. We first evaluate the weak derivative of $J_\textbf{x}$ with respect to $\xi$:
    \begin{equation}
        \frac{dJ_\textbf{x}}{d\xi} = \int_\Omega G_\tau(\textbf{x}-\textbf{y}) \cdot \text{sgn}(\xi - \phi^k(\textbf{y})) \, d\textbf{y} + \frac{1}{\tilde{\lambda}}(F_1(\textbf{x}) - F_2(\textbf{x})).
    \end{equation}
    Using the normalization condition for $G_\tau$ and decomposing the sign function integral into regions where $\phi^k < \xi$ and $\phi^k > \xi$, we have:
    \begin{equation}
         \int_\Omega G_\tau(\textbf{x}-\textbf{y}) \cdot \text{sgn}(\xi - \phi^k(\textbf{y})) \, d\textbf{y} = 1 - 2 \int_{\{\textbf{y}:\phi^k(\textbf{y}) > \xi\}} G_\tau(\textbf{x}-\textbf{y}) \, d\textbf{y}.
    \end{equation}
    Substituting this back into the weak derivative of $J_\textbf{x}$:
    \begin{equation}
        \frac{dJ_\textbf{x}}{d\xi} = 1 - 2 \int_{\{\textbf{y}:\phi^k(\textbf{y}) > \xi\}} G_\tau(\textbf{x}-\textbf{y}) \, d\textbf{y} + \frac{1}{\tilde{\lambda}}(F_1 - F_2).
    \end{equation}
    Denoting that $S(\xi)=\int_{\{\textbf{y}:\phi^k(\textbf{y}) > \xi\}} G_\tau(\textbf{x}-\textbf{y}) \, d\textbf{y}$ and recalling that $T(\textbf{x}) = \frac{1}{2} + \frac{1}{2\tilde{\lambda}} \left( F_1(\textbf{x}) - F_2(\textbf{x}) \right)$, we can write the weak derivative more compactly as:
    \begin{equation}
        \frac{dJ_\textbf{x}}{d\xi} = 2(T(\textbf{x})-S(\xi)).
    \end{equation}
    Because the super-level set shrinks as $\xi$ increases, $S(\xi)$ is a monotonically non-increasing function. Consequently, the weak derivative $\frac{dJ_\textbf{x}}{d\xi}$ is a monotonically non-decreasing function, which governs the minimization process under two distinct scenarios:
    \begin{enumerate}
        \item If $\frac{dJ_\textbf{x}}{d\xi} > 0$ for all $\xi \in [0,1]$ (i.e., $T(\mathbf{x}) > S(0)$), the function $J_{\mathbf{x}}(\xi)$ is strictly increasing on the entire domain. The constrained global minimum is naturally achieved at the lower boundary, $\xi^* = 0$.
    
        \item If $J_{\mathbf{x}}(\xi)$ is not strictly increasing, there exists a non-empty valid set of $\xi$ where the weak derivative is non-positive (i.e., $\frac{dJ_\textbf{x}}{d\xi} \le 0$). Because the weak derivative is non-decreasing, the convex function $J_{\mathbf{x}}(\xi)$ initially decreases and subsequently increases. Its global minimum is therefore exactly attained at the supremum of the set where the weak derivative remains non-positive.
    \end{enumerate}
    Setting $\frac{dJ_\textbf{x}}{d\xi} \le 0$ yields the condition $S(\xi) \ge T(\mathbf{x})$. By unifying both scenarios, the optimal solution $\phi^{k+1}(\mathbf{x}) = \xi^*$ is explicitly given by the weighted quantile filter formula \eqref{eq:median_update}.
\end{proof}

\subsubsection{Total Energy Stability}

We now prove that the proposed algorithm, which alternates between updating the parameters $\textbf{c}$ and the level-set function $\phi$, guarantees the monotonic decay of the total energy.

\begin{theorem}[Unconditional Stability]
    \label{thm:unconditional_stability}
    Let $\{(\phi^k, \textbf{c}^k)\}$ be the sequence generated by the alternating iterative algorithm. Then, the total energy is non-increasing:
    \begin{equation}
        \mathcal{E}_\tau(\phi^{k+1}, \textbf{c}^{k+1}) \le \mathcal{E}_\tau(\phi^k, \textbf{c}^k).
    \end{equation}
\end{theorem}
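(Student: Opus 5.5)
We split one iteration into its two half-steps and show that neither increases the energy:
\[
\mathcal{E}_\tau(\phi^{k+1},\textbf{c}^{k+1}) \le \mathcal{E}_\tau(\phi^{k+1},\textbf{c}^{k}) \le \mathcal{E}_\tau(\phi^{k},\textbf{c}^{k}).
\]
We use the ordering of Algorithm~\ref{alg:median_filter}. At iteration $k$ the parameters $\textbf{c}^k$ are computed from $\phi^k$, and then $\phi^{k+1}$ is obtained by the quantile filter with $\textbf{c}^k$ fixed. At iteration $k+1$, $\textbf{c}^{k+1}$ is computed from $\phi^{k+1}$. We assume, as in the coordinate descent description of Section~\ref{subsec:threshold_dynamics}, that the parameter update minimizes $\mathcal{E}_\tau(\phi,\cdot)$ over $\textbf{c}$ for the current $\phi$. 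If the update only decreases the energy rather than minimizing it, that is still enough.

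\textbf{Second inequality (the $\phi$-step).} This is the movement limiter argument. First we show $\mathcal{M}_k(\phi)\ge 0$ for every $\phi\in BV(\Omega;[0,1])$ and $\mathcal{M}_k(\phi^k)=0$.
\begin{itemize}
\item The identity $\mathcal{M}_k(\phi^k)=0$ follows by substituting $\phi=\phi^k$ in \eqref{eq:movement_limiter_def}: the integrand becomes $2|a-b|-|a-b|-|a-b|=0$.
\item For nonnegativity, use the symmetry $G_\tau(\textbf{x}-\textbf{y})=G_\tau(\textbf{y}-\textbf{x})$ to rewrite $2\iint G_\tau|\phi(\textbf{x})-\phi^k(\textbf{y})|$ as $\iint G_\tau\big(|\phi(\textbf{x})-\phi^k(\textbf{y})|+|\phi(\textbf{y})-\phi^k(\textbf{x})|\big)$.
\item Then apply pointwise, with $a=\phi(\textbf{x})$, $b=\phi(\textbf{y})$, $a'=\phi^k(\textbf{x})$, $b'=\phi^k(\textbf{y})$, the inequality
\[
|a-b'|+|b-a'|\ge |a-b|+|a'-b'|.
\]
This is the standard ``crossed pairs'' inequality for the absolute value on $\mathbb{R}$. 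It can be checked via the layer-cake/coarea representation $|s-t|=\int_0^1|\mathbbm{1}_{s\ge\mu}-\mathbbm{1}_{t\ge\mu}|\,d\mu$, which reduces it to the binary case. There it is an elementary check over the $2^4$ combinations: the right side counts disagreements within the pairs $(a,b)$ and $(a',b')$, and each is dominated by the crossed disagreements.
\end{itemize}

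Since $\phi^{k+1}$ globally minimizes $\mathcal{E}_\tau(\cdot,\textbf{c}^k)+\mathcal{M}_k$ by the Exact Iterative Scheme theorem, we get
\[
\mathcal{E}_\tau(\phi^{k+1},\textbf{c}^k)\le \mathcal{E}_\tau(\phi^{k+1},\textbf{c}^k)+\mathcal{M}_k(\phi^{k+1})\le \mathcal{E}_\tau(\phi^{k},\textbf{c}^k)+\mathcal{M}_k(\phi^k)=\mathcal{E}_\tau(\phi^k,\textbf{c}^k).
\]

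\textbf{First inequality (the $\textbf{c}$-step).} Since $\textbf{c}^{k+1}$ minimizes $\mathcal{E}_\tau(\phi^{k+1},\cdot)$, and $\textbf{c}^k$ is an admissible competitor, the inequality follows immediately. Only the fidelity term of $\mathcal{E}_\tau$ depends on $\textbf{c}$.

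\textbf{Main obstacle.} We expect the nonnegativity of $\mathcal{M}_k$ to be the only step needing care. The pointwise inequality fails without symmetrization, so exploiting the symmetry of $G_\tau$ is essential. We would verify it through the threshold decomposition as described. A secondary caveat is that the $\textbf{c}$-update must be a genuine (partial) minimization. If the paper's parameter update is only a closed-form stationarity condition, we would note that convexity of the fidelity in $\textbf{c}$, as in Chan--Vese, gives minimization.
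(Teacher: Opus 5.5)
Your overall chain is the same alternating-minimization argument as the paper's, and the paper simply asserts $\mathcal{M}_k\ge 0$ and $\mathcal{M}_k(\phi^k)=0$ without proof. The only difference there is bookkeeping: the paper sets $\mathbf{c}^{k+1}=\arg\min_{\mathbf{c}}\mathcal{E}_\tau(\phi^k,\mathbf{c})$ and then updates $\phi$ with $\mathbf{c}^{k+1}$, while you follow the algorithm's ordering.

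The genuine problem is your proof of $\mathcal{M}_k\ge0$, the step you rightly call the crux. The ``crossed pairs'' inequality $|a-b'|+|b-a'|\ge|a-b|+|a'-b'|$ is false. Take $a=\phi(\mathbf{x})=0$, $b=\phi(\mathbf{y})=1$, $a'=\phi^k(\mathbf{x})=1$, $b'=\phi^k(\mathbf{y})=0$, i.e.\ the two phases swap between $\phi^k$ and $\phi$. The left side is $0$ and the right side is $2$. This is one of your sixteen binary configurations, so the layer-cake reduction does not save it.

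More fundamentally, no pointwise argument can work, symmetrized or not. It would give $\mathcal{M}_k\ge0$ for every nonnegative symmetric kernel. But for binary arguments, $\mathcal{M}_k$ is twice the quadratic form of the kernel evaluated at the difference of the two indicators (see below). That form is indefinite for nonnegative kernels whose Fourier transform changes sign, such as the indicator of a disk. What makes the limiter nonnegative is that $G_\tau$ is a positive definite kernel; symmetry alone is not enough.

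The fix is to apply the layer-cake representation to the integrated functional, not to a pointwise inequality. For $s,t\in[0,1]$ one has $|s-t|=\int_0^1(\mathbbm{1}_{s\ge\mu}-\mathbbm{1}_{t\ge\mu})^2\,d\mu$. Insert this into all three terms of $\mathcal{M}_k$, set $u_\mu=\mathbbm{1}_{\mathcal{T}_\mu\phi}$, $v_\mu=\mathbbm{1}_{\mathcal{T}_\mu\phi^k}$, $w_\mu=u_\mu-v_\mu$, and expand the squares. By the symmetry of $G_\tau$, the diagonal terms $u_\mu(\mathbf{x})^2-u_\mu(\mathbf{y})^2$ and $v_\mu(\mathbf{y})^2-v_\mu(\mathbf{x})^2$ integrate to zero, and the cross terms combine to give
\[
\mathcal{M}_k(\phi)=2\int_0^1\int_\Omega\int_\Omega G_\tau(\mathbf{x}-\mathbf{y})\,w_\mu(\mathbf{x})\,w_\mu(\mathbf{y})\,d\mathbf{y}\,d\mathbf{x}\,d\mu .
\]
Extend $w_\mu$ by zero outside $\Omega$ and use Plancherel. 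Each inner double integral is then a positive multiple of $\int_{\mathbb{R}^2}e^{-\tau|\xi|^2}\,|\widehat{w_\mu}(\xi)|^2\,d\xi\ge0$. This is the same positivity mechanism behind the Esedo\=glu--Otto concavity argument for threshold dynamics. With this replacement, the rest of your proof goes through.
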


\begin{proof}
    First, the parameters are updated to minimize the energy with fixed $\phi^k$:
    $$ \textbf{c}^{k+1} = \arg\min_\textbf{c} \mathcal{E}(\phi^k, \textbf{c}). $$
    By definition, this implies:
    \begin{equation} \label{eq:decay_c}
        \mathcal{E}_\tau(\phi^k, \textbf{c}^{k+1}) \le \mathcal{E}_\tau(\phi^k, \textbf{c}^k).
    \end{equation}
    Next, $\phi$ is updated using the scheme \eqref{eq:median_update}. Since $\phi^{k+1}$ minimizes the alternative energy $\mathcal{E}_\tau(\cdot, \textbf{c}^{k+1}) + \mathcal{M}_k(\cdot)$, we have:
    \begin{align*}
        \mathcal{E}_\tau(\phi^{k+1}, \textbf{c}^{k+1}) &\le \mathcal{E}_\tau(\phi^{k+1}, \textbf{c}^{k+1}) + \mathcal{M}_k(\phi^{k+1}) & \text{(Non-negativity of } \mathcal{M}_k \text{)} \\
        &\le \mathcal{E}_\tau(\phi^k, \textbf{c}^{k+1}) + \mathcal{M}_k(\phi^k) & \text{(Minimality of } \phi^{k+1} \text{)} \\
        &= \mathcal{E}_\tau(\phi^k, \textbf{c}^{k+1}). & \text{(Contact property } \mathcal{M}_k(\phi^k)=0 \text{)}
    \end{align*}
    Combining the inequalities from both steps:
    \begin{equation}
        \mathcal{E}_\tau(\phi^{k+1}, \textbf{c}^{k+1}) \le \mathcal{E}_\tau(\phi^k, \textbf{c}^{k+1}) \le \mathcal{E}_\tau(\phi^k, \textbf{c}^k).
    \end{equation}
    Thus, the algorithm is unconditionally energy stable.
\end{proof}

\section{Numerical Experiments}
\label{sec:numerical_experiments}

In this section, we present several numerical experiments to validate the theoretical properties and practical performance of the proposed weighted-quantile-filter-based framework. Our algorithm is designed as a continuous threshold dynamics scheme: it effectively integrates the numerical stability of traditional threshold dynamics with the geometric continuity of level-set representations. This unique combination allows for robust interface evolution even under challenging numerical settings.

\subsection{Numerical Implementation via Weighted Quantile Filter}
\label{subsec:implementation}

The core of our numerical framework lies in the efficient evaluation of the convolution thresholding step. Unlike the traditional binary MBO scheme, which relies on direct convolution thresholding that leads to the pinning effect, our approach implements the update via a weighted quantile filter, extending the median filter scheme acting on the continuous level-set function $\phi$ \cite{Oberman_2004}.

For a discretized domain, the Gaussian kernel $G_{\tau}$ is approximated by a discrete mask $W$ with weights $\{w_j\}_{j=1}^M$. The update for the level-set function $\phi^{k+1}(\textbf{x})$ at each grid point $\textbf{x}$ is computed as follows:

\begin{enumerate}
    \item \textbf{Neighbor Evaluation:} We sample the values of the current level-set function $\phi^k$ within the neighborhood of $\textbf{x}$. To maintain sub-grid accuracy, these values $\{\phi^k(\textbf{x} + \textbf{y}_j)\}_{j=1}^M$ can be evaluated using bilinear interpolation.
    
    \item \textbf{Sorting:} The sampled values are sorted in descending order, denoted as $v_{(1)} \ge v_{(2)} \ge \dots \ge v_{(M)}$.
    
    \item \textbf{Quantile Selection:} We calculate the cumulative weights of the sorted neighbors. The new value $\phi^{k+1}(\textbf{x})$ is determined by selecting the value $v_{(m)}$ that corresponds to the specific quantile required by \eqref{eq:median_update}.
\end{enumerate}

This procedure, summarized in Algorithm \ref{alg:median_filter_continuous}, allows the level-set function to take values from a continuous range rather than being restricted to a binary set. By selecting the update value from the sorted continuous neighbors, the interface can move by arbitrary sub-grid distances, thereby effectively eliminating the pinning effect associated with spatial discretization.

\begin{algorithm}[H]
\caption{Continuous Weighted Quantile Filtering Scheme}
\label{alg:median_filter_continuous}
\begin{algorithmic}[1]
\STATE \textbf{Input:} Current level-set function $\phi^k$, time step $\tau$, kernel weights $\{w_j\}_{j=1}^M$.
\STATE \textbf{Initialization:} Generate the discrete mask $W = \{w_j\}$ approximating $G_{\tau}$.

\FOR{each grid point \textbf{x} in the domain}
    \STATE Sample neighbors $\{\phi^k(\textbf{x} + \textbf{y}_j)\}_{j=1}^M$ using bilinear interpolation.
    \STATE Sort the sampled values in descending order: $v_{(1)} \ge v_{(2)} \ge \dots \ge v_{(M)}$.
    \STATE Compute cumulative weights $S_m = \sum_{l=1}^m w_{(l)}$, where $w_{(l)}$ is the weight for $v_{(l)}$.
    \STATE Find the first index $m^*$ satisfying $S_{m^*} \ge T(\textbf{x})$.
    \STATE Set $\phi^{k+1}(\textbf{x}) = v_{(m^*)}$.
\ENDFOR

\STATE \textbf{Output:} Updated level-set function $\phi^{k+1}$.
\end{algorithmic}
\end{algorithm}

\subsection{Continuity Verification via the Chan-Vese Model}
\label{subsec:chan_vese_continuity}

We begin by applying our framework to the classical Chan-Vese (CV) model for bi-phasic image segmentation \cite{Chan_2001}. This experiment aims to demonstrate that our continuous formulation successfully resolves the discretization issues inherent in classical binary thresholding schemes.

\subsubsection{Model Formulation and Parameter Update}

Consider a grayscale image $I: \Omega \to \mathbbm{R}$. The CV model minimizes an energy functional balancing region-based fidelity and interface length. Within our general framework, the fidelity functions are:
\begin{equation}
    F_1(\textbf{x}, \textbf{c}) = |I(\textbf{x}) - c_1|^2, \quad F_2(\textbf{x}, \textbf{c}) = |I(\textbf{x}) - c_2|^2,
    \label{eq:cv_fidelity}
\end{equation}
where $\textbf{c} = (c_1, c_2)$ are the region constants. By fixing the level-set function $\phi^k$, the optimal parameters $c_1^{k+1}$ and $c_2^{k+1}$ are updated via the following recursive mean-value formulas:
\begin{equation}
    c_1^{k+1} = \frac{\int_{\Omega} \phi^k(\textbf{x}) I(\textbf{x}) d\textbf{x}}{\int_{\Omega} \phi^k(\textbf{x}) d\textbf{x}}, \quad c_2^{k+1} = \frac{\int_{\Omega} (1 - \phi^k(\textbf{x})) I(\textbf{x}) d\textbf{x}}{\int_{\Omega} (1 - \phi^k(\textbf{x})) d\textbf{x}}.
    \label{eq:c_recursion}
\end{equation}

\subsubsection{Comparison of Continuity: Small Time Step Behavior}

One of the primary motivations for adopting the continuous level-set formulation (the weighted quantile filter scheme) is to overcome the limitations of binary characteristic function formulation (the threshold dynamics scheme) on discrete grids. In the binary setting, where $u \in \{0, 1\}$, the evolution of the interface is governed by a simple thresholding rule \eqref{eq:threshold_update}. When the time-scale parameter $\tau$ is chosen to be very small, the interface often fails to move because the local forcing term $T(\textbf{x})$ is insufficient to ``flip'' the binary value of a pixel, leading to the pinning effect.

In contrast, our quantile filter approach evolves a continuous level-set function $\phi \in [0, 1]$ using the weighted quantile update \eqref{eq:median_update}. Because $\phi$ can take values between 0 and 1, the scheme effectively interpolates the interface position between grid points. This ``continuity'' allows the level-set function to undergo infinitesimal changes at each iteration, even when $\tau$ is extremely small.

Figure \ref{fig:pinning_effect} visually demonstrates this phenomenon. The experiment involves segmenting a noisy image containing multiple geometric shapes corrupted by Gaussian noise. For all simulations presented in this figure, the parameter is fixed at $\tilde{\lambda} = 0.6$. The figure compares the segmentation results of the traditional binary threshold dynamics (TD) scheme and our proposed quantile filter (QF) algorithm with varying time steps $\tau$. As $\tau$ decreases, the TD method exhibits increasing instability: it fails to handle topological changes (splitting) at intermediate steps and eventually freezes due to the pinning effect, failing to evolve from the initial contour (as seen at $\tau=10^{-4}$). In contrast, our MF method maintains a smooth and accurate interface evolution even for the smallest $\tau$ value, effectively verifying the ``continuity'' property of our level-set formulation.

\begin{figure}[htbp]
    \begin{subfigure}[b]{0.23\textwidth}
        \centering
        \includegraphics[width=\textwidth]{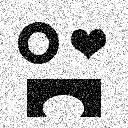} 
        \caption{Noisy Input}
    \end{subfigure}
    \hfill
    \begin{subfigure}[b]{0.23\textwidth}
        \centering
        \includegraphics[width=\textwidth]{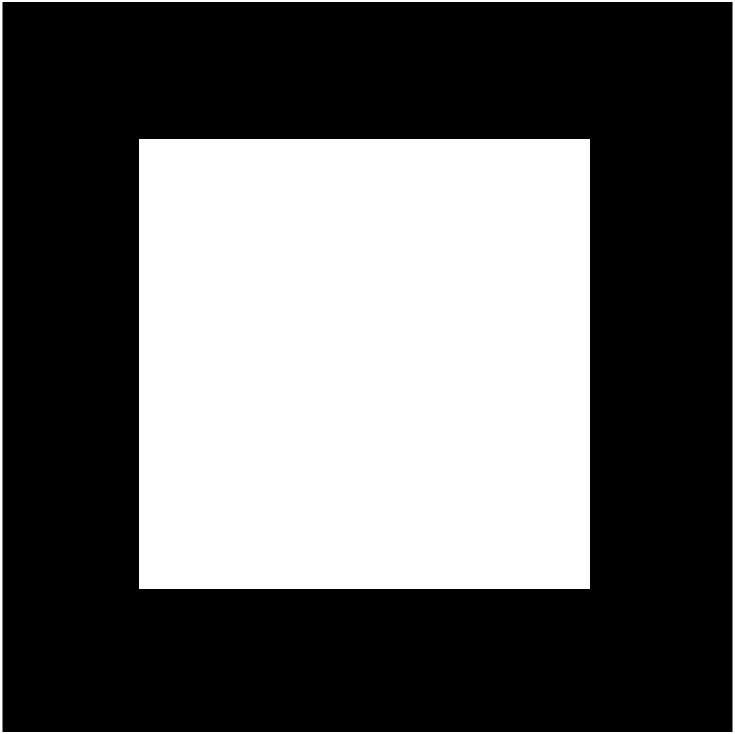} 
        \caption{Initial Contour}
    \end{subfigure}
    \hfill
    \begin{subfigure}[b]{0.23\textwidth}
        \centering
        \includegraphics[width=\textwidth]{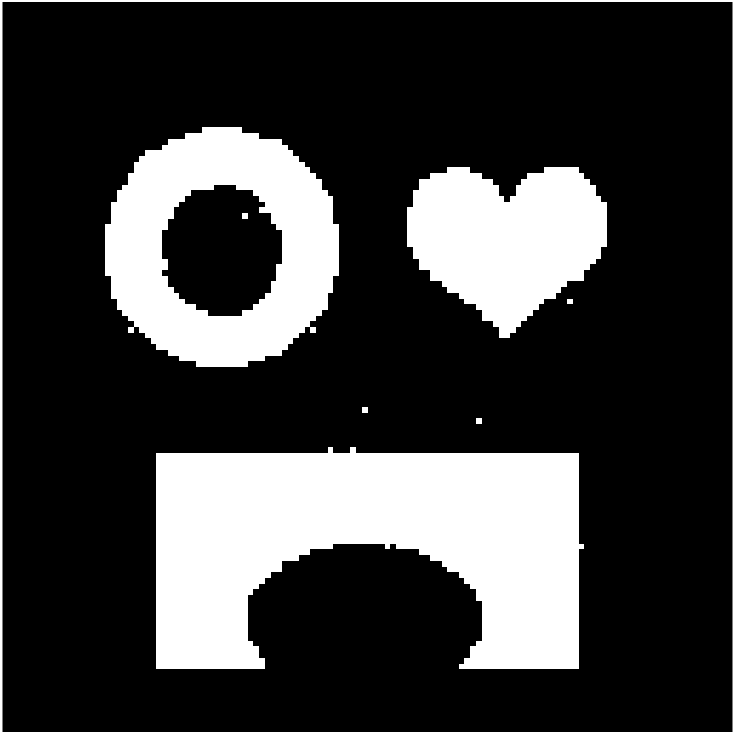}
        \caption{TD, $\tau=9 \times 10^{-4}$}
    \end{subfigure}
    \hfill
    \begin{subfigure}[b]{0.23\textwidth}
        \centering
        \includegraphics[width=\textwidth]{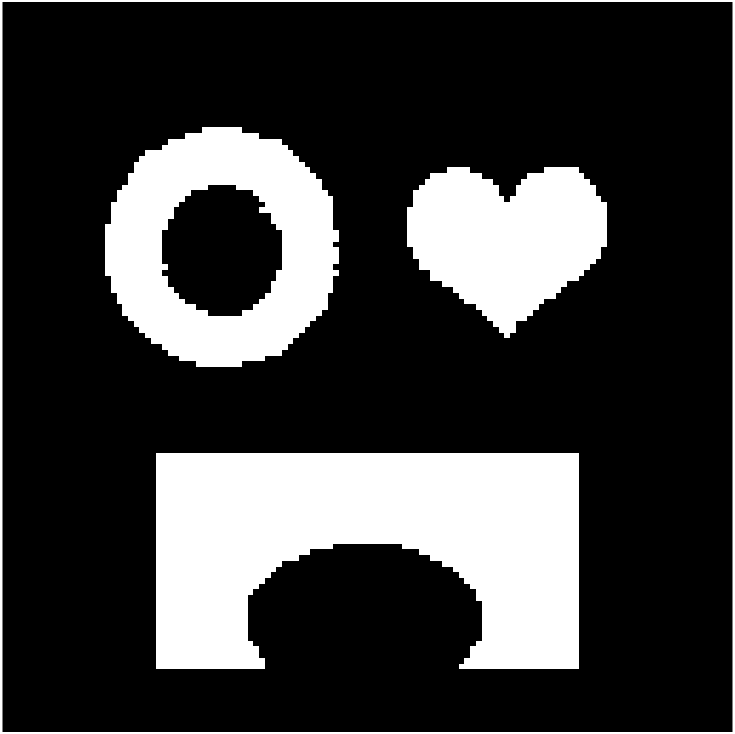}
        \caption{QF, $\tau=9 \times 10^{-4}$}
    \end{subfigure}
    
    \vspace{1em} 
    
    \begin{subfigure}[b]{0.23\textwidth}
        \centering
        \includegraphics[width=\textwidth]{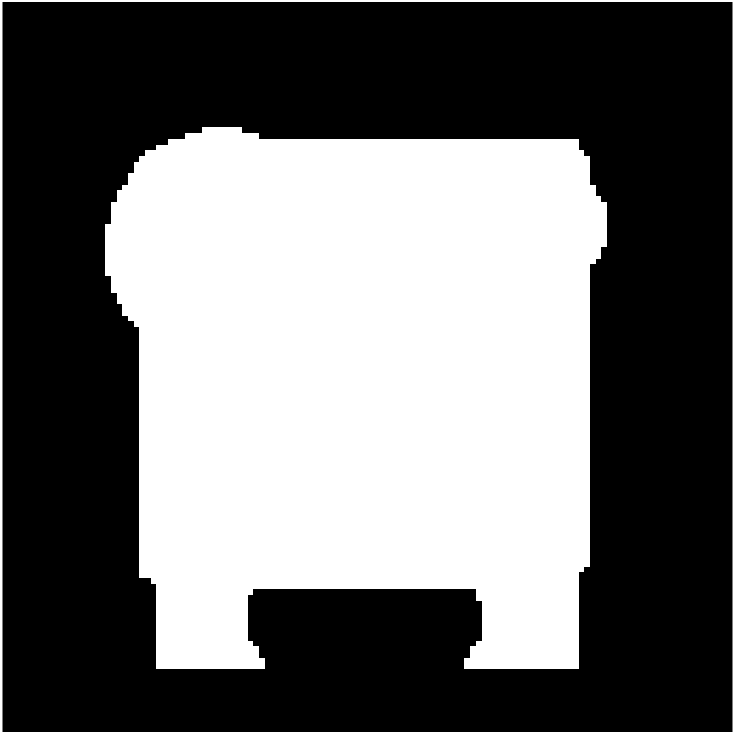}
        \caption{TD, $\tau=7 \times 10^{-4}$}
    \end{subfigure}
    \hfill
    \begin{subfigure}[b]{0.23\textwidth}
        \centering
        \includegraphics[width=\textwidth]{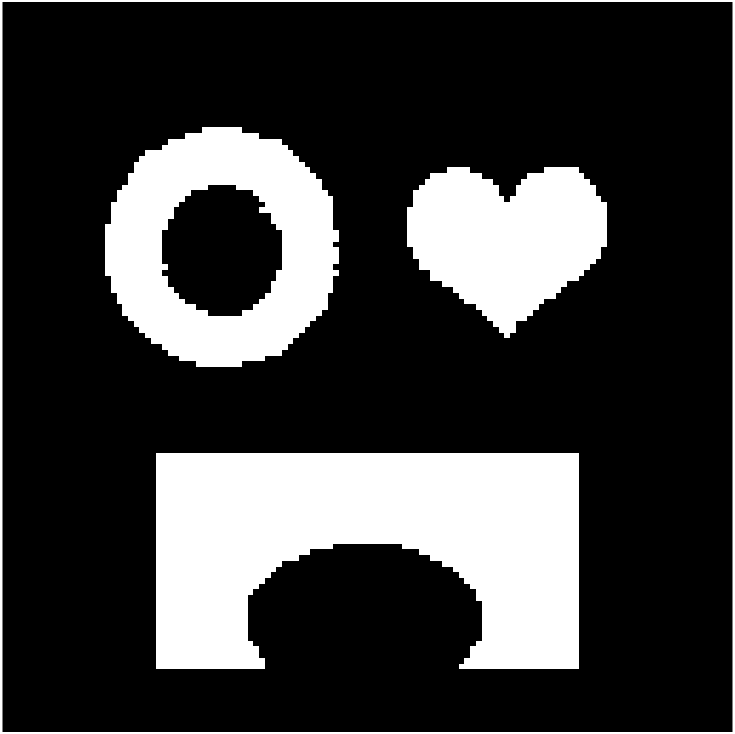}
        \caption{QF, $\tau=7 \times 10^{-4}$}
    \end{subfigure}
    \hfill
    \begin{subfigure}[b]{0.23\textwidth}
        \centering
        \includegraphics[width=\textwidth]{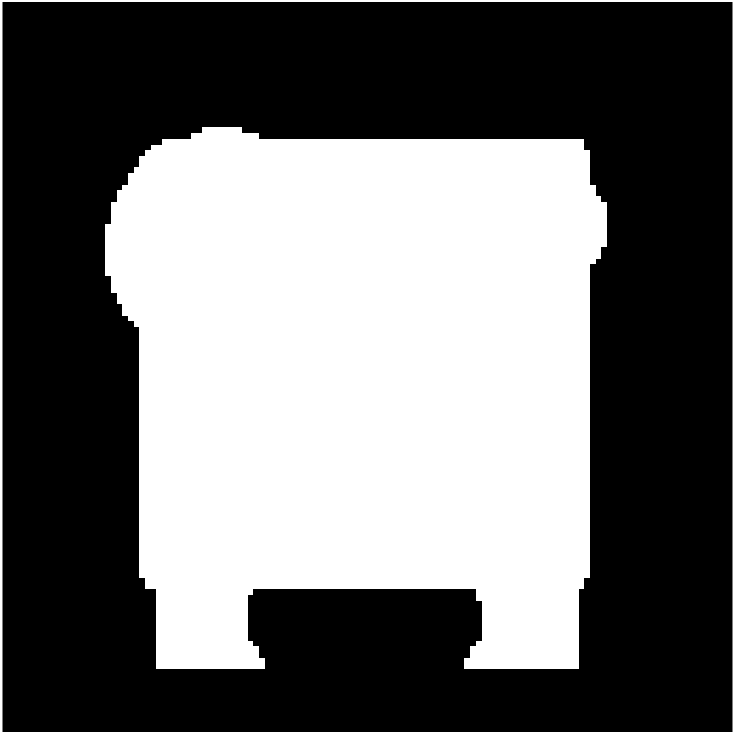}
        \caption{TD, $\tau=5 \times 10^{-4}$}
    \end{subfigure}
    \hfill
    \begin{subfigure}[b]{0.23\textwidth}
        \centering
        \includegraphics[width=\textwidth]{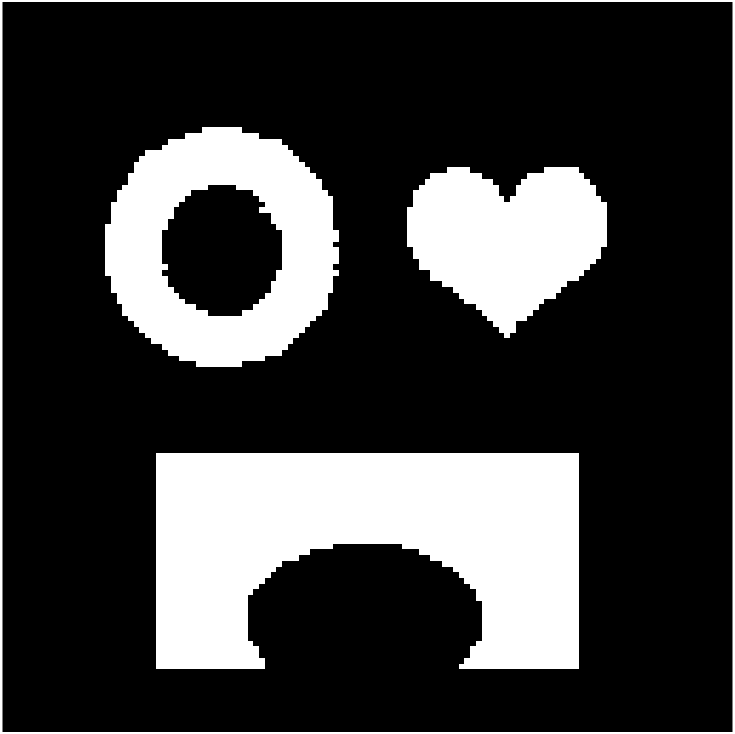}
        \caption{QF, $\tau=5 \times 10^{-4}$}
    \end{subfigure}
    
    \vspace{1em}

    \begin{subfigure}[b]{0.23\textwidth}
        \centering
        \includegraphics[width=\textwidth]{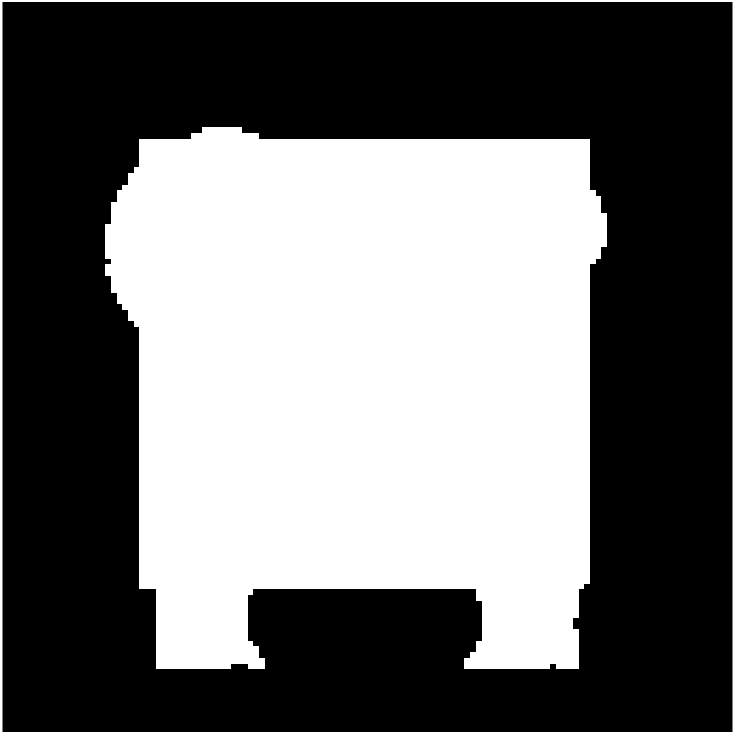}
        \caption{TD, $\tau=3 \times 10^{-4}$}
    \end{subfigure}
    \hfill
    \begin{subfigure}[b]{0.23\textwidth}
        \centering
        \includegraphics[width=\textwidth]{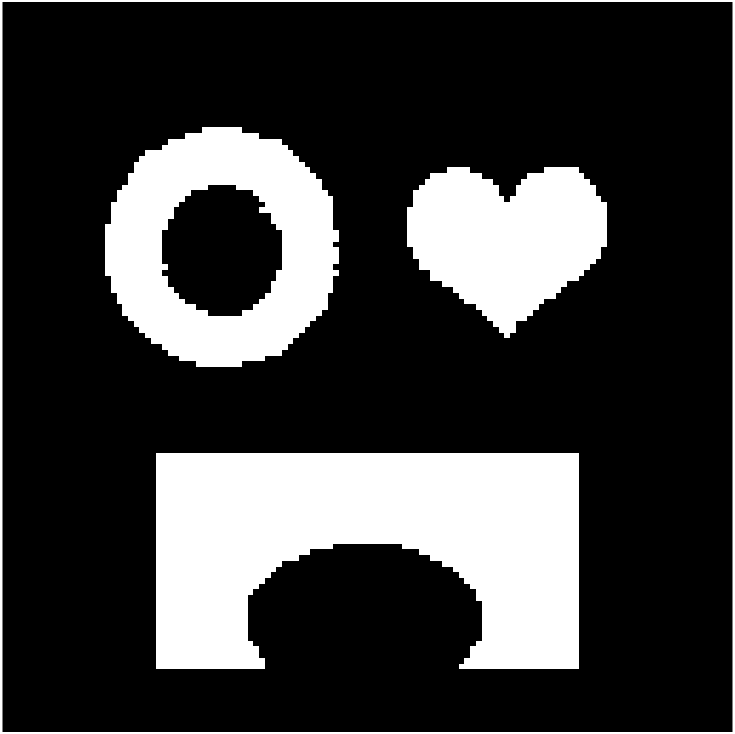}
        \caption{QF, $\tau=3 \times 10^{-4}$}
    \end{subfigure}
    \hfill
    \begin{subfigure}[b]{0.23\textwidth}
        \centering
        \includegraphics[width=\textwidth]{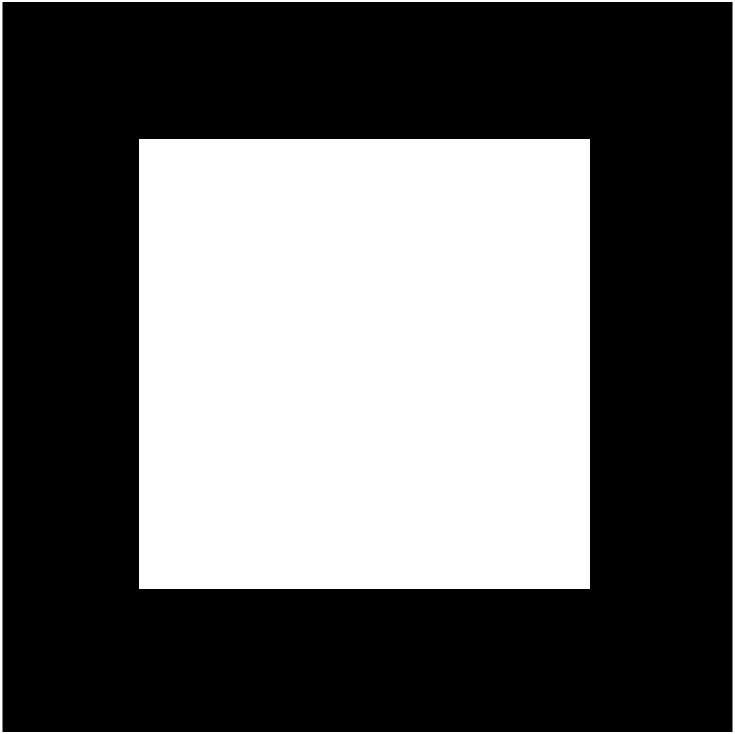}
        \caption{TD, $\tau=10^{-4}$}
    \end{subfigure}
    \hfill
    \begin{subfigure}[b]{0.23\textwidth}
        \centering
        \includegraphics[width=\textwidth]{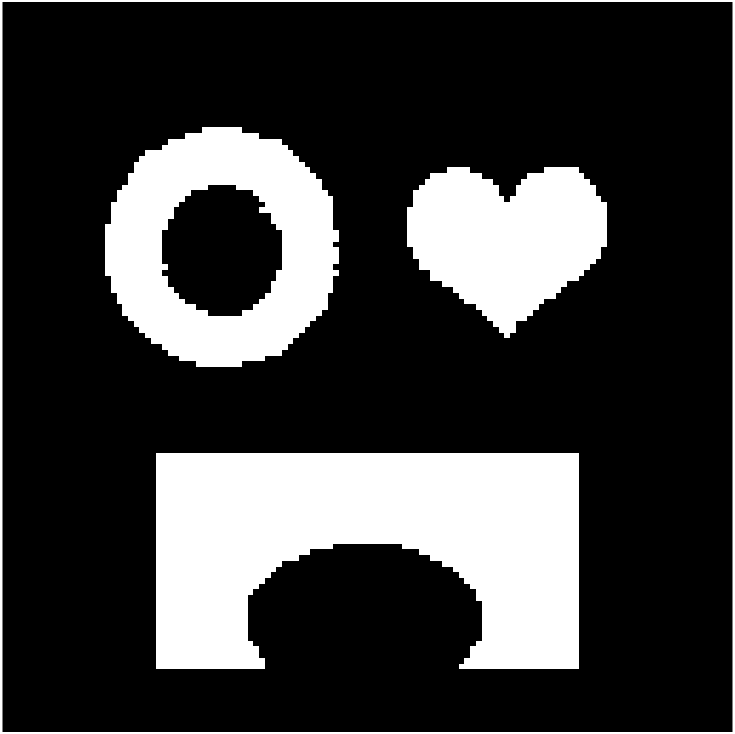}
        \caption{QF, $\tau=10^{-4}$}
    \end{subfigure}
    
    \caption{Visual comparison of segmentation results on a noisy image with different time steps $\tau$. The parameter is set to $\tilde{\lambda} = 0.6$. 
    (a) The noisy input image. 
    (b) The initial square contour. 
    (c)--(l) Comparison between the traditional threshold dynamics (TD) method and the proposed quantile filter (QF) scheme with decreasing time steps $\tau \in \{9, 7, 5, 3, 1\} \times 10^{-4}$. 
    It can be observed that the QF scheme (d, f, h, j, l) consistently achieves accurate segmentation regardless of the time step size. 
    In contrast, the TD method demonstrates significant instability: at intermediate time steps (e, g, i), it fails to handle topological changes (splitting); and at a small time step (k), it suffers from the severe pinning effect, where the contour fails to evolve from the initialization (compare k with b).}
    \label{fig:pinning_effect}
\end{figure}

\begin{remark}
\label{rem:param_guide}
The selection of the key parameters $\tau$ and $\tilde{\lambda}$ can be guided by the empirical behaviors observed in the numerical experiments, particularly in Figure \ref{fig:pinning_effect}:

\paragraph{Selection of $\tau$}
The parameter $\tau$ determines the time step size and the effective interaction radius ($R = \sqrt{2\tau}$). Unlike traditional threshold dynamics, the proposed continuous framework accommodates an arbitrary $\tau$ without suffering from the pinning effect. As empirically validated in Figure \ref{fig:pinning_effect}, a relatively large $\tau$ (e.g., $\tau = 9 \times 10^{-4}$) accelerates the morphological evolution, yielding high computational efficiency. Conversely, a smaller $\tau$ (e.g., $\tau = 10^{-4}$) is preferable for achieving high geometric accuracy and preserving sharp corners, as it effectively minimizes the artificial rounding effect inherent to the Gaussian kernel.

\paragraph{Selection of $\tilde{\lambda}$}
The parameter $\tilde{\lambda}$ controls the trade-off between data fidelity and interface smoothness, where a larger $\tilde{\lambda}$ imposes a stronger penalty on the perimeter. As demonstrated in the noisy image segmentation in Figure \ref{fig:pinning_effect}, a moderate value (e.g., $\tilde{\lambda} = 0.6$) is empirically effective for smoothing out substantial noise while preserving essential geometric features. In broader applications, if the resulting interface appears too fragmented or retains excessive noise, $\tilde{\lambda}$ can be incrementally increased to enforce stronger geometric regularization.
\end{remark}

\subsection{Interface Sharpness Verification via the Chan-Vese Model}
\label{subsec:chan_vese_sharpness}

In this section, we conduct numerical experiments to validate the theoretical assertions regarding the interface sharpening property. The primary objective is to demonstrate that the proposed quantile filter scheme effectively drives the interface from a continuous initialization to a sharp profile without any artificial post-processing or hard thresholding steps at the end of the algorithm, and converges to the geometric steady state predicted by the analysis in Theorem \ref{thm:binary_enforcement}.

The numerical validation is performed with a fixed time step $\tau = 10^{-3}$ and the parameter $\tilde{\lambda} = 0.6$. The computational domain $\Omega$ is discretized on a uniform grid. The initialization is chosen as a regularized signed distance function with a conical profile. Specifically, it is normalized to have a maximum value of $\phi=1$ at the domain center and linearly decays to $\phi=0$ at the domain corners. We observe its morphological evolution and the recovery of sharpness as it approaches the stationary state.

Figure \ref{fig:sharpness_verification} presents a comprehensive visualization of the interface evolution, organized into two perspectives to verify the theoretical predictions. The top row (3D views) displays the morphological evolution of the interface level set function $\phi$, capturing the spatial transition from the conical initialization to the sharp, cylindrical steady state. Concurrently, the bottom row (histograms) illustrates the statistical evolution of the function values. As time progresses, the distribution shifts from a broad spread across the interval $[0,1]$ to a binary concentration at the endpoints $\{0, 1\}$, providing quantitative evidence of the sharpening effect and the suppression of numerical diffusion.

\begin{figure}[htbp]
    \centering
    \begin{subfigure}[b]{0.24\textwidth}
        \centering
        \includegraphics[width=\textwidth]{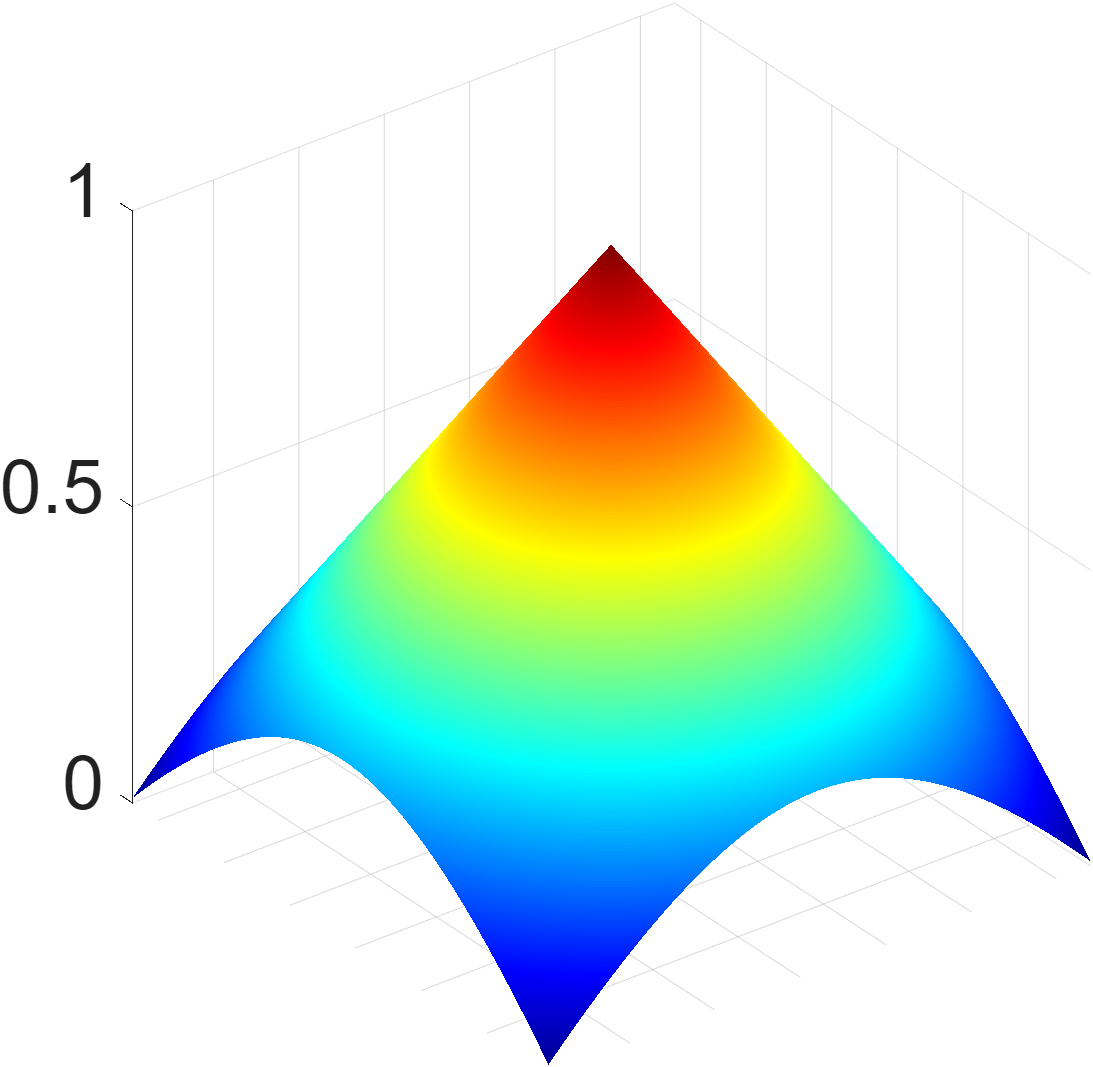} \caption{Initial State}
    \end{subfigure}
    \hfill
    \begin{subfigure}[b]{0.24\textwidth}
        \centering
        \includegraphics[width=\textwidth]{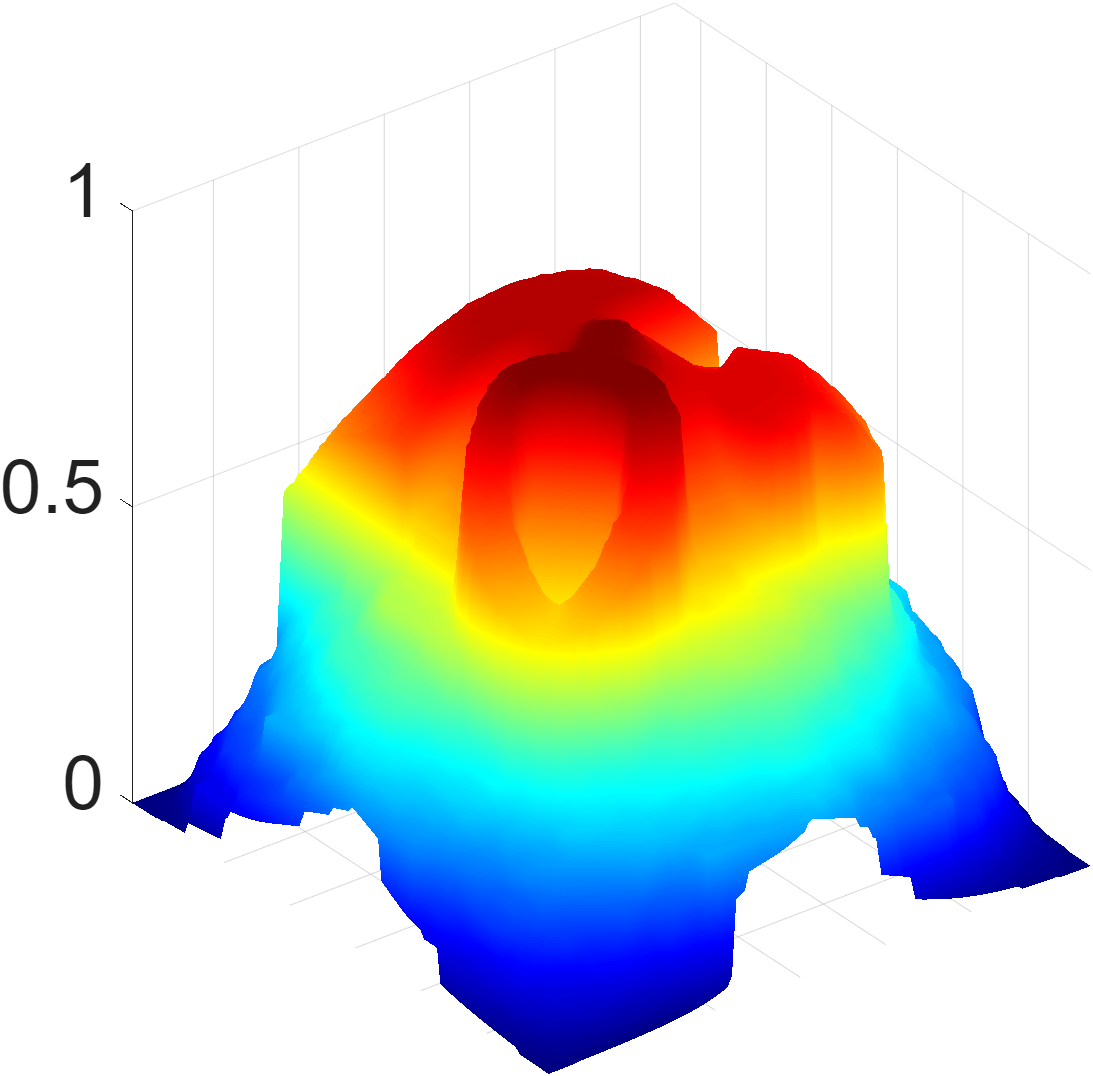} 
        \caption{Early Evolution}
    \end{subfigure}
    \hfill
    \begin{subfigure}[b]{0.24\textwidth}
        \centering
        \includegraphics[width=\textwidth]{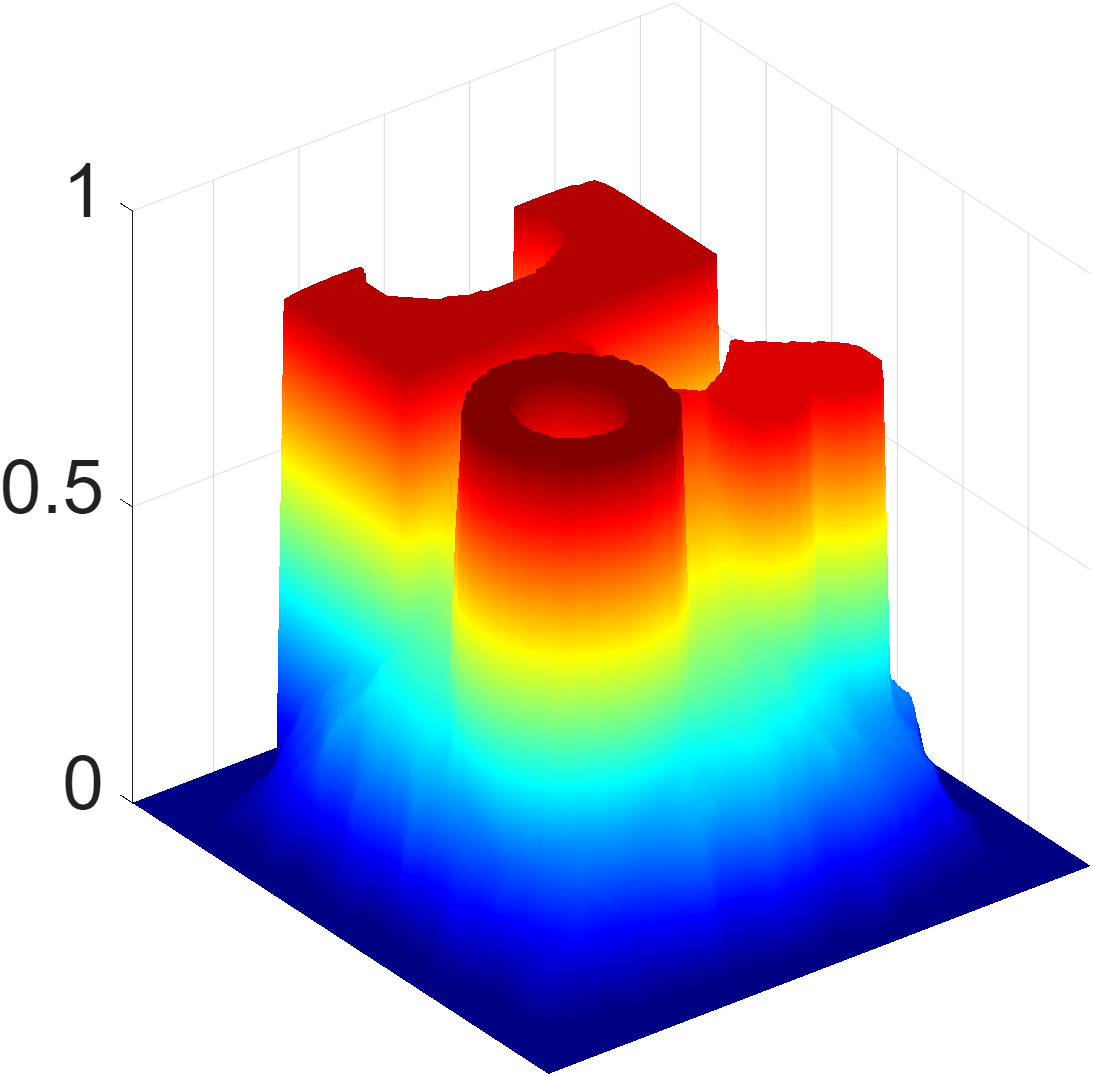} 
        \caption{Late Evolution}
    \end{subfigure}
    \hfill
    \begin{subfigure}[b]{0.24\textwidth}
        \centering
        \includegraphics[width=\textwidth]{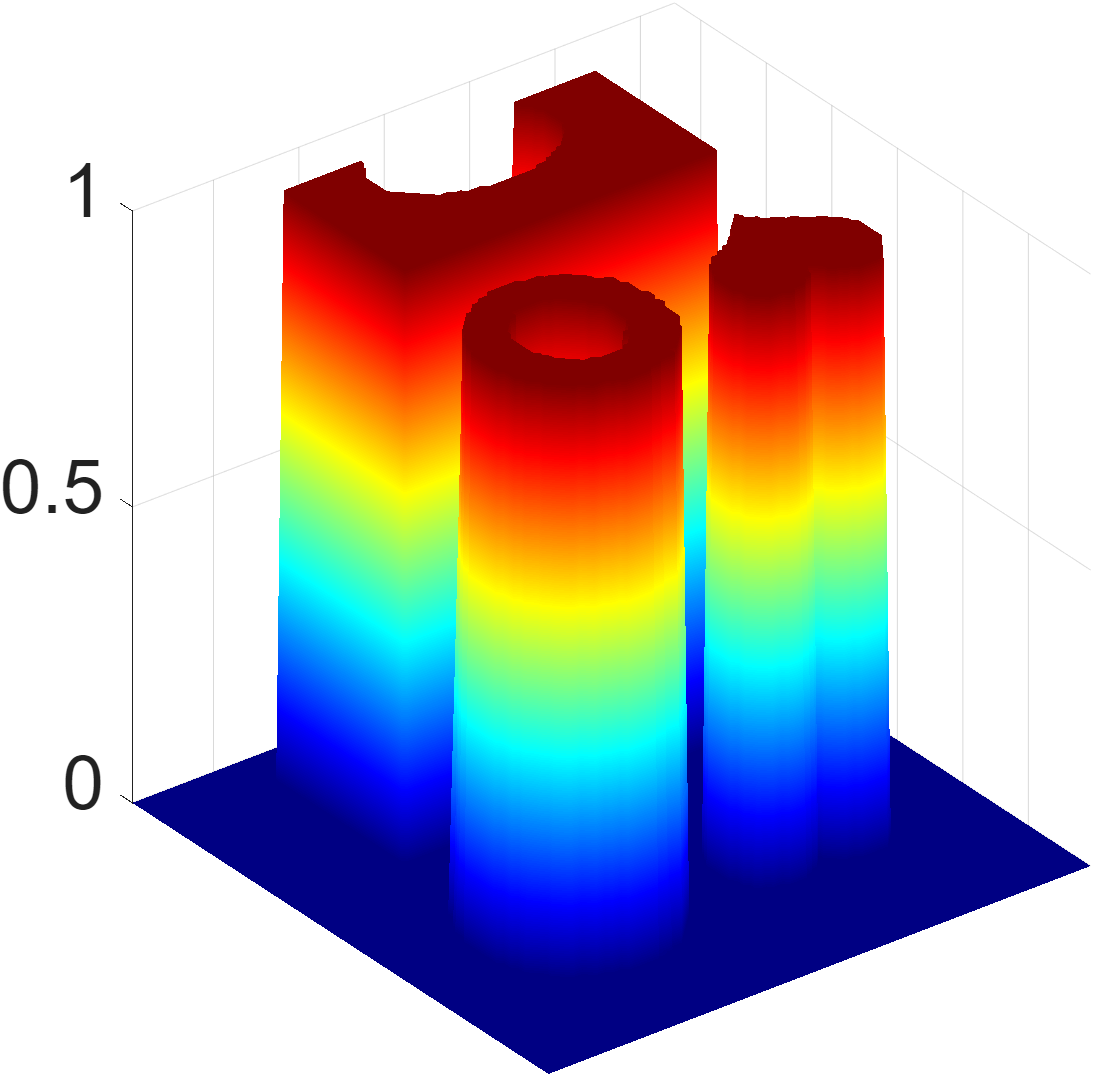} 
        \caption{Steady State}
    \end{subfigure}
    
    \vspace{1em} 
    
    \begin{subfigure}[t]{0.24\textwidth}
        \centering
        \includegraphics[width=\textwidth]{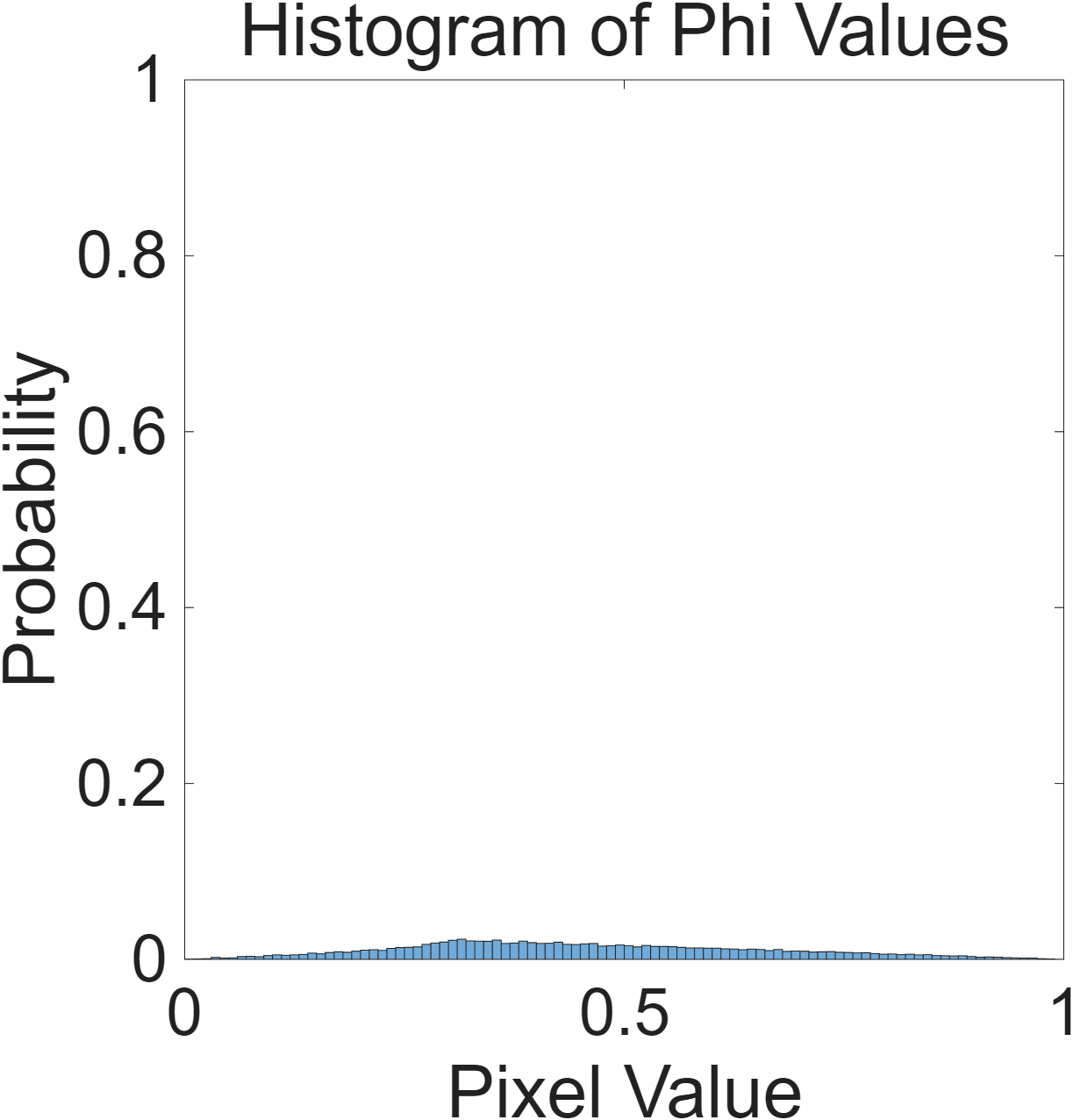} 
        \caption{Initial State}
    \end{subfigure}
    \hfill
    \begin{subfigure}[t]{0.24\textwidth}
        \centering
        \includegraphics[width=\textwidth]{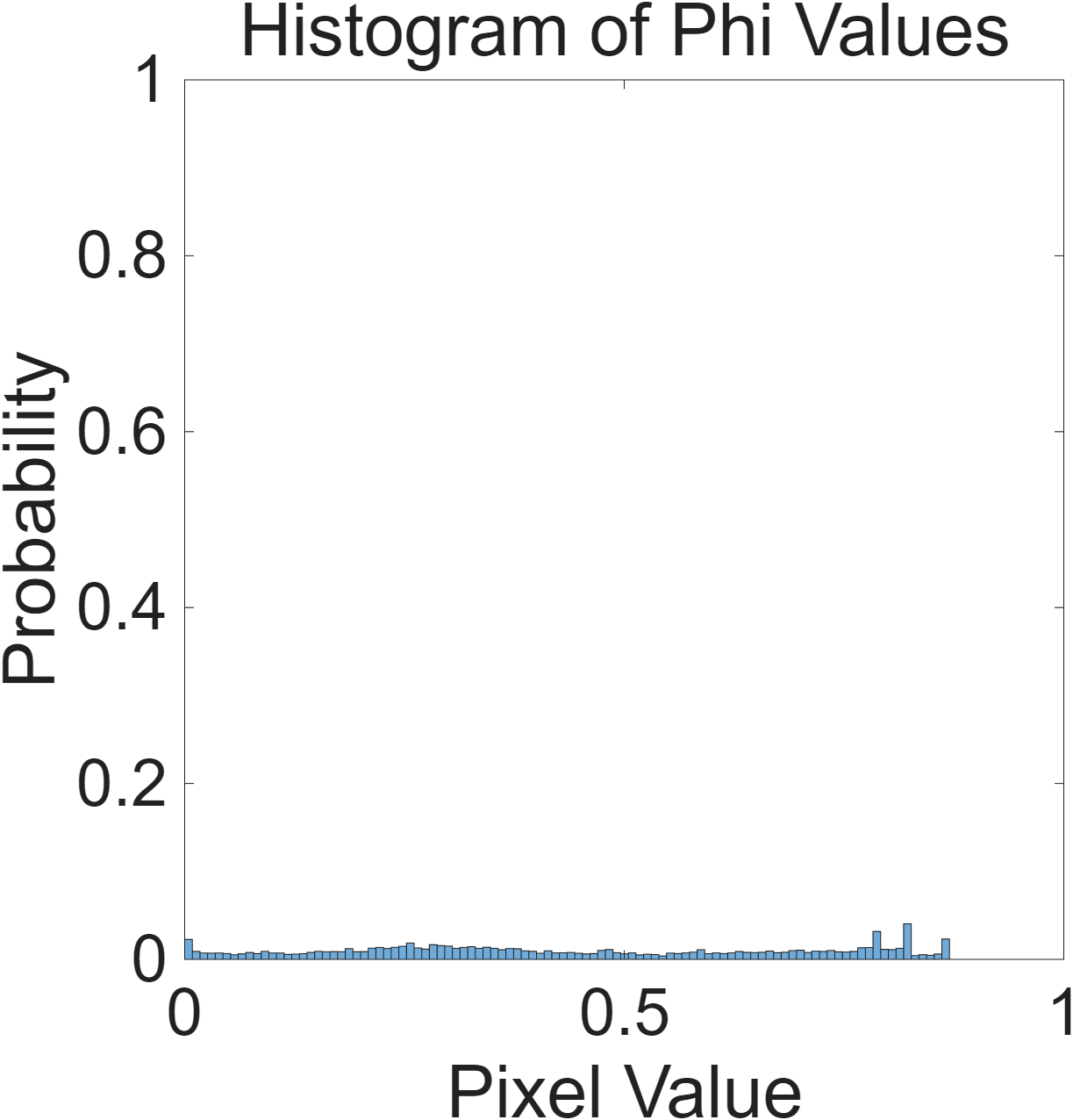}
        \caption{Early Evolution}
    \end{subfigure}
    \hfill
    \begin{subfigure}[t]{0.24\textwidth}
        \centering
        \includegraphics[width=\textwidth]{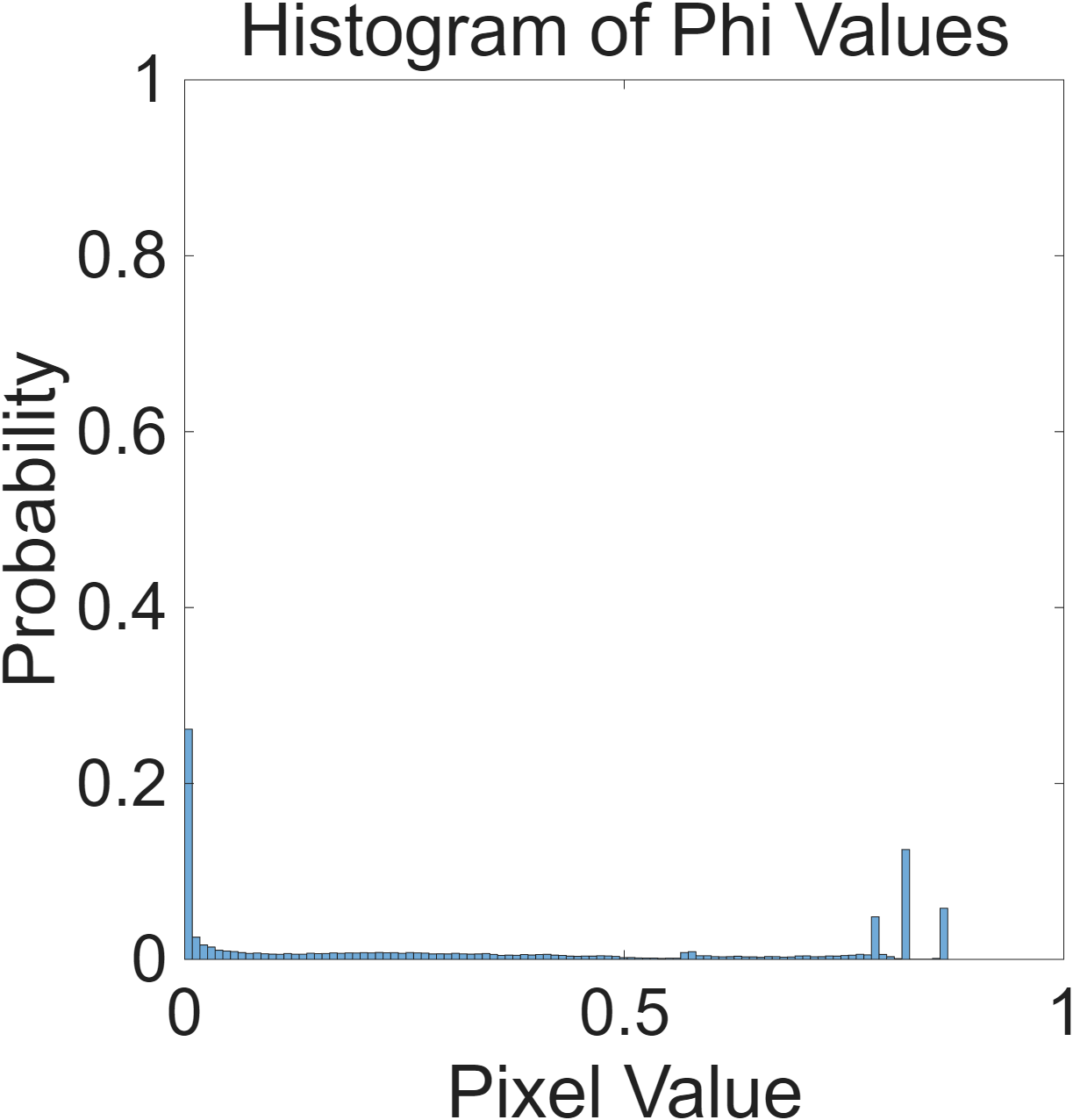}
        \caption{Late Evolution}
    \end{subfigure}
    \hfill
    \begin{subfigure}[t]{0.24\textwidth}
        \centering
        \includegraphics[width=\textwidth]{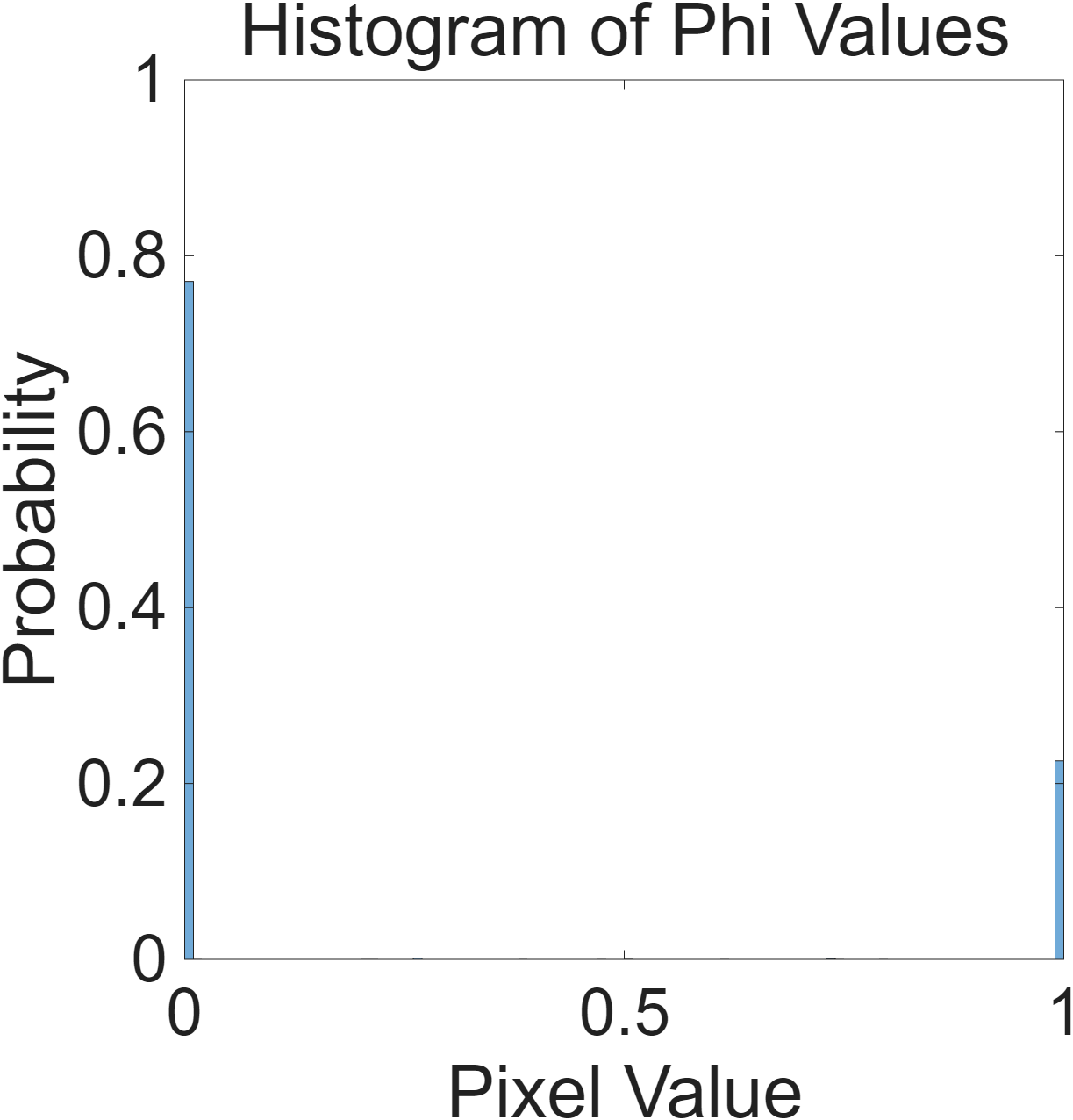}
        \caption{Steady State}
    \end{subfigure}
    
    \caption{Visual verification of sharpness. Top row: 3D evolution of the interface level set function $\phi$ showing the sharpening process. Bottom row: Corresponding histograms of pixel values showing the transition from a continuous distribution to a binary state. The columns represent the progression from initialization (left) to the final geometric steady state (right).}
    \label{fig:sharpness_verification}
\end{figure}

\subsubsection{High-Order Implementation via Quadratic Interpolation}
    To improve geometric accuracy, we propose the following quadratic interpolation algorithm, which effectively captures the local geometric features of the image.

\begin{algorithm}[H]
    \caption{Continuous Quantile Filter Scheme via Quadratic Interpolation}
    \label{alg:quadratic_interpolation}
    \begin{algorithmic}[1]
        \STATE \textbf{Input:} Current level-set function $\phi^k$, time step $\tau$.
        
        \FOR{each grid point $\mathbf{x}$ in the domain}
            \STATE Sample neighbors $\{\phi^k(\mathbf{x}+\mathbf{y}_j)\}_{j=1}^M$ on a circle with center $\textbf{x}$ and radius $\tau$.
            \STATE Construct local quadratic polynomial $\mathcal{P}(\theta)$ approximate values on the circle.
            \STATE Find $\phi^*$ such that $|\{\theta \in [0, 2\pi) : \mathcal{P}(\theta) < \phi^*\}| = 2\pi T(\mathbf{x})$.
            \STATE Set $\phi^{k+1}(\mathbf{x}) = \phi^*$.
        \ENDFOR
        
        \STATE \textbf{Output:} Updated level-set function $\phi^{k+1}$.
    \end{algorithmic}
\end{algorithm}

\begin{remark}
    In practice, to achieve sub-grid accuracy, we discretize the neighborhood boundary into $M=8$ equidistant sampling points. These nodes partition the domain into 4 non-overlapping segments. On each segment $i \in \{1, \dots, 4\}$, the local level-set function is approximated by a quadratic polynomial $\mathcal{P}_{i}(\theta) = c_{2,i} \theta^2 + c_{1,i} \theta + c_{0,i}$, uniquely determined by the nodal values at the two endpoints and one midpoint.
\end{remark}

\begin{figure}[htbp]
    \centering
    \begin{subfigure}[b]{0.24\textwidth}
        \centering
        \includegraphics[width=\textwidth]{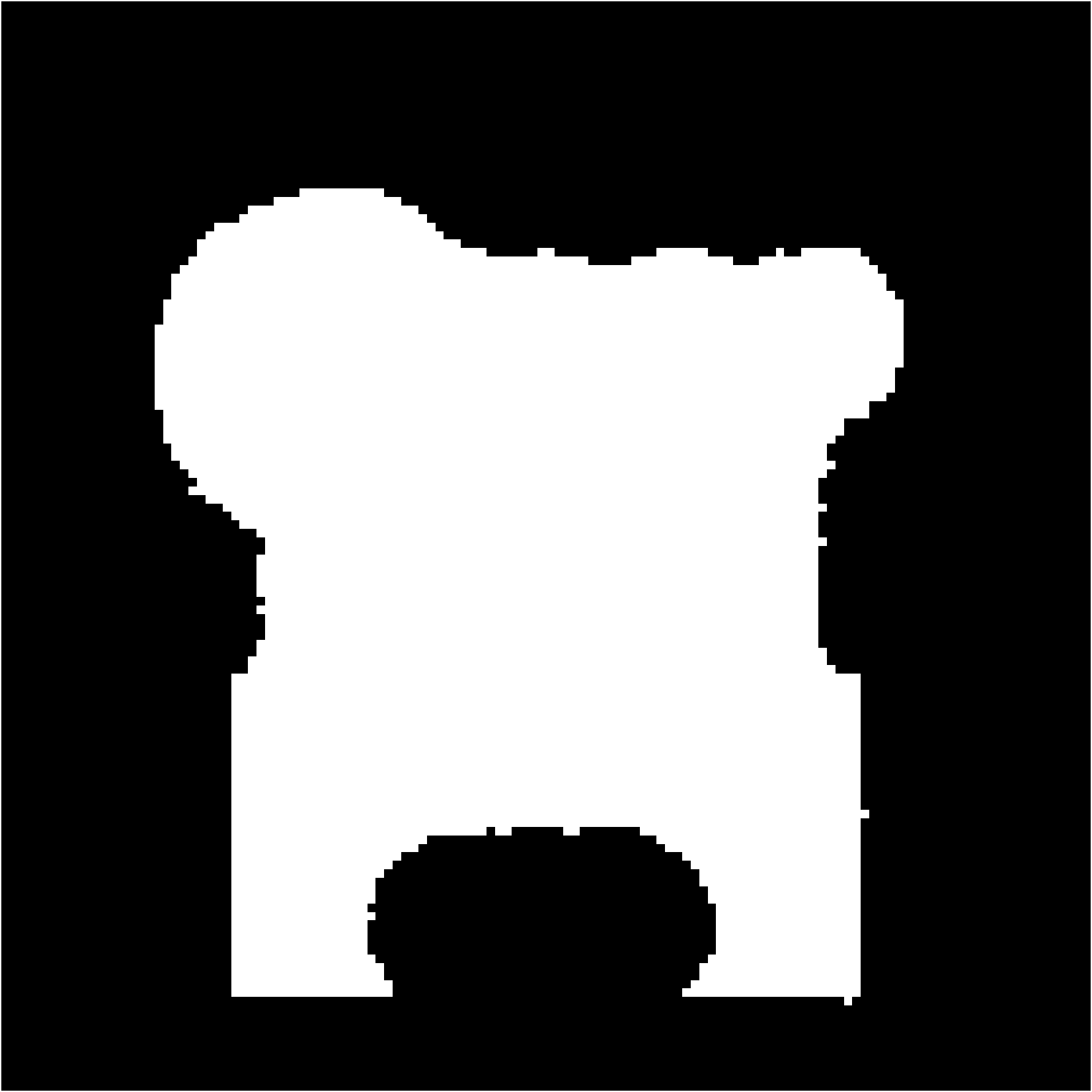} 
        \caption{Inter ($k=20$)}
        \label{fig:quad_step1}
    \end{subfigure}
    \hfill
    \begin{subfigure}[b]{0.24\textwidth}
        \centering
        \includegraphics[width=\textwidth]{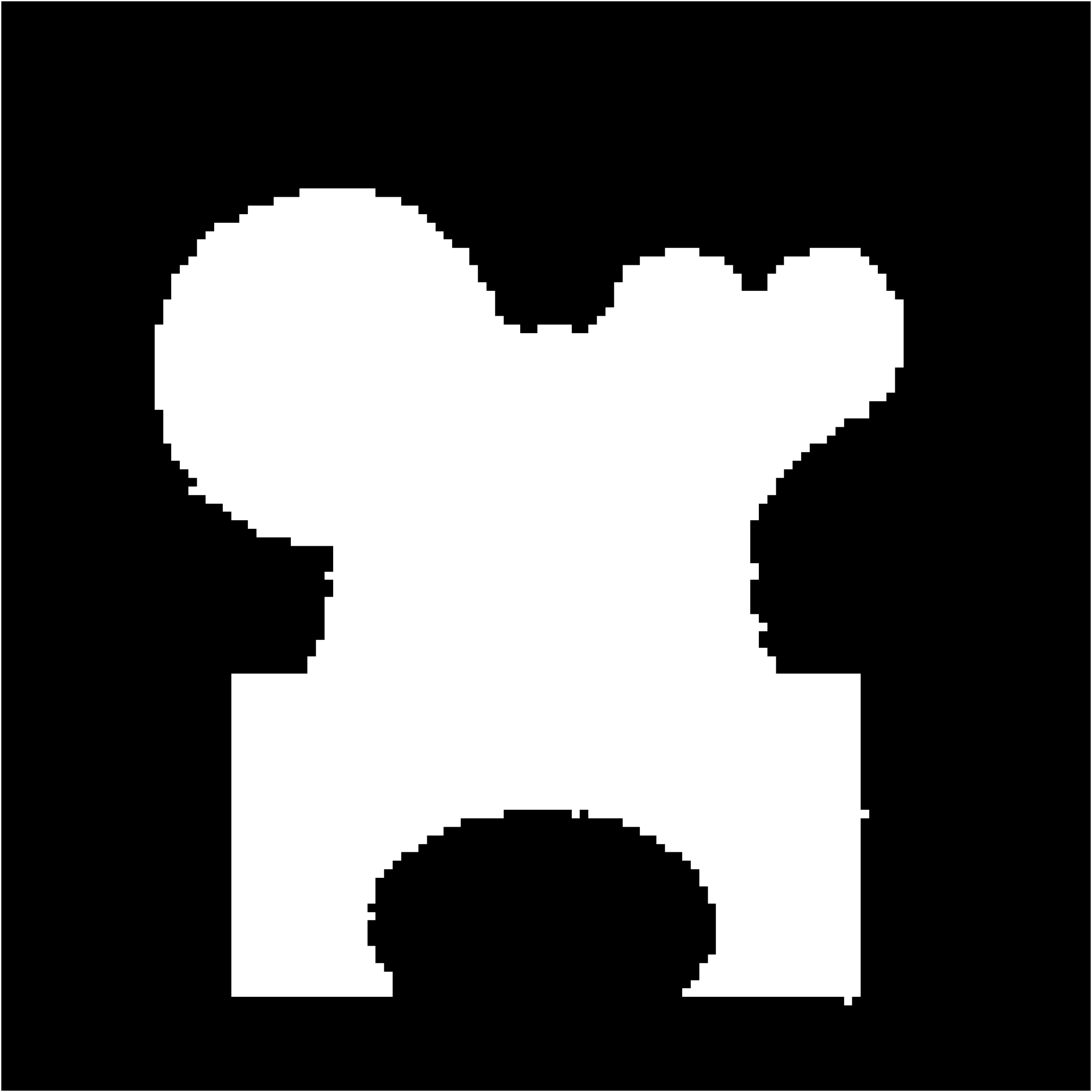} 
        \caption{Inter ($k=40$)} 
        \label{fig:quad_step2}
    \end{subfigure}
    \hfill
    \begin{subfigure}[b]{0.24\textwidth}
        \centering
        \includegraphics[width=\textwidth]{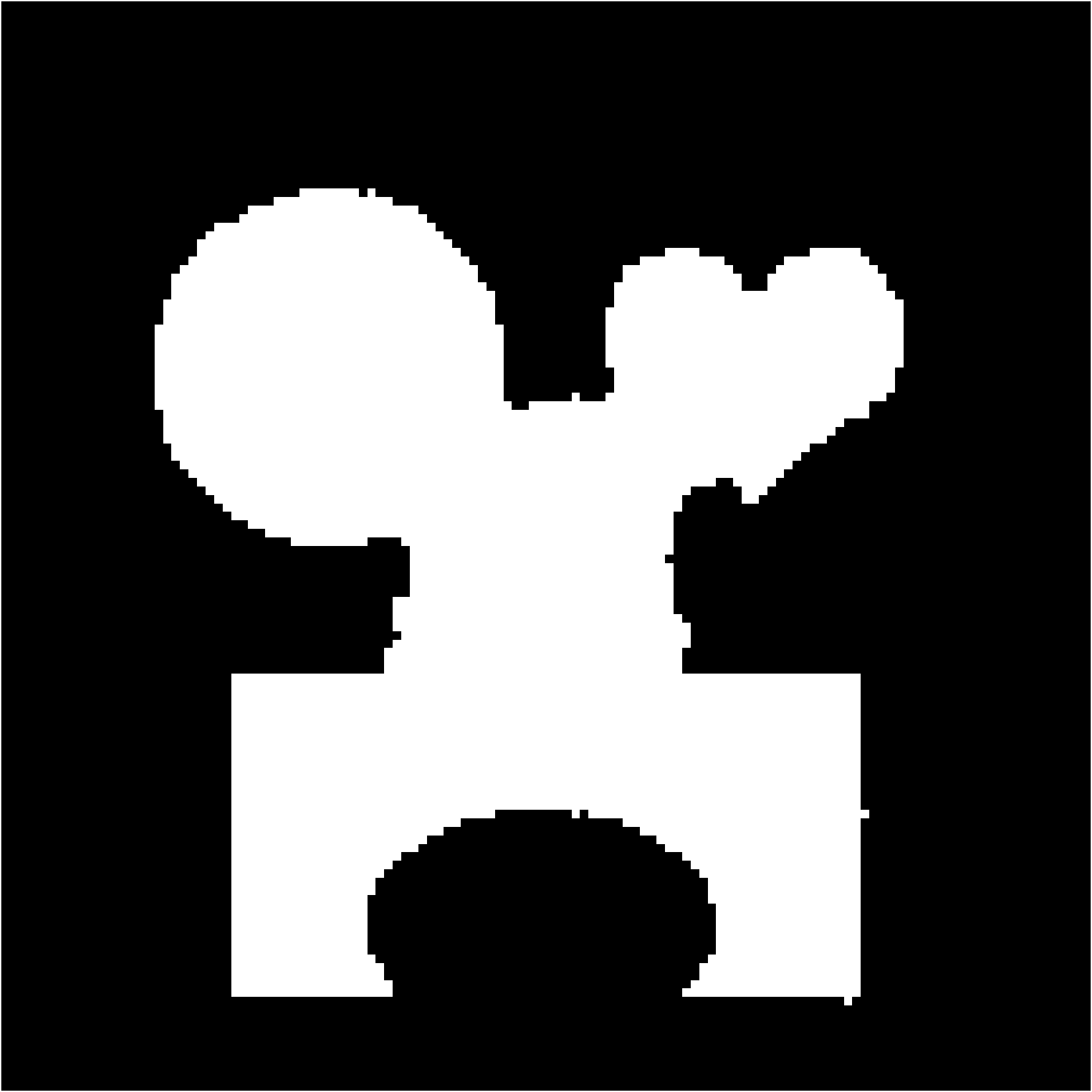} 
        \caption{Inter ($k=60$)} 
        \label{fig:quad_step3}
    \end{subfigure}
    \hfill
    \begin{subfigure}[b]{0.24\textwidth}
        \centering
        \includegraphics[width=\textwidth]{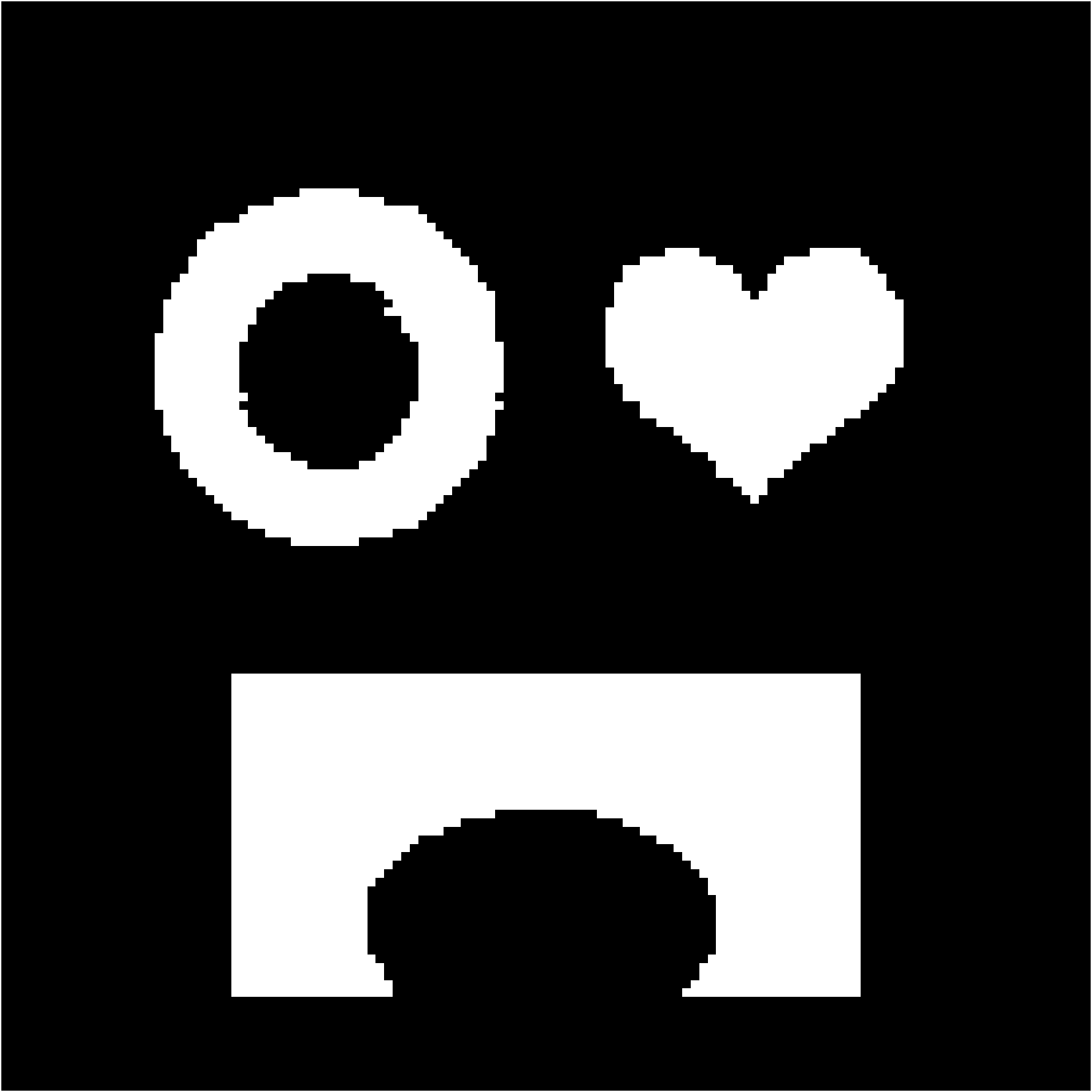} 
        \caption{Final ($k=90$)}
        \label{fig:quad_step4}
    \end{subfigure}
    
    \caption{The simulation is performed with a time step $\tau=5 \times 10^{-4}$, and the effective parameter $\tilde{\lambda}=0.6$. The sequence illustrates the robust evolution of the contour, effectively capturing geometric details and handling topological changes, finally converging to a sharp binary solution at iteration $k=90$.}
    \label{fig:quad_evolution}
\end{figure}

While the Chan-Vese model performs well on images with distinct, homogeneous regions, it relies on the assumption that the intensity within each phase is approximately piecewise constant. This assumption fails in the presence of intensity inhomogeneity (e.g., uneven illumination), where a global fitting term is insufficient. This limitation motivates the need for more sophisticated energy functionals. In the next section, we demonstrate that our weighted-quantile-filter-based framework is flexible enough to handle such complex scenarios by employing the local intensity fitting (LIF) model.

\subsection{Stability Verification via Local Intensity Fitting (LIF)}
\label{subsec:stability_LIF}
To rigorously test the stability of our algorithm (Theorem \ref{thm:unconditional_stability}) under more complex energy functionals, we employ the local intensity fitting (LIF) model \cite{Li_2007, Chunming_Li_2008}. Unlike the Chan-Vese model which assumes global constant intensities, the LIF model captures the local intensity information of the image. Specifically, it is designed to deal with intensity inhomogeneity by minimizing the smoothed error between the original image and the local fitting image. This capability is essential for segmenting images with severe intensity inhomogeneity and noise.

\subsubsection{Model Formulation and Parameter Update}

Consider a grayscale image $I: \Omega \to \mathbbm{R}$. The LIF model minimizes an energy functional that handles intensity inhomogeneities by using local region statistics. The fidelity functional is defined as:
\begin{equation}
    F_i(\textbf{y}, \textbf{c}) = \int_{\Omega} G_\sigma(\textbf{x}-\textbf{y}) |C_i(\textbf{x}) - I(\textbf{y})|^2 d\textbf{x}, \quad i = 1, 2, 
    \label{eq:lif_energy}
\end{equation}
where $G_\sigma$ is a Gaussian kernel with variance $2\sigma$. Unlike the Chan-Vese model where $\mathbf{c}$ consists of global constants, in the LIF model, the generalized parameter vector $\mathbf{c}$ comprises two spatially varying functions, i.e., $\mathbf{c} = (C_1(\mathbf{x}), C_2(\mathbf{x}))$, where $C_i(\mathbf{x})$ represents the local intensity means associated with each phase.

By fixing the intermediate level-set function $\phi^k$, the optimal local means $C_1$ and $C_2$ satisfy the Euler-Lagrange equations and are updated via the following convolution formulas:
\begin{equation}
    C_1^{k+1}(\textbf{x}) = \frac{(G_\sigma * (\phi^k I))(\textbf{x})}{(G_\sigma * \phi^k)(\textbf{x})}, \quad C_2^{k+1}(\textbf{x}) = \frac{(G_\sigma * ((1 - \phi^k) I))(\textbf{x})}{(G_\sigma * (1 - \phi^k))(\textbf{x})}.
    \label{eq:lif_update}
\end{equation}
Here, $*$ denotes the convolution operation.

\subsubsection{Validation of Stability: Energy Decay}

The validation of the unconditional stability property focuses on the monotonicity of the energy evolution within our weighted-quantile-filter-based framework. Figure \ref{fig:lif_experiment} presents the segmentation process in an image characterized by severe intensity inhomogeneity, serving as a stress test for the LIF model.

\begin{figure}[htbp]
    \centering
    \begin{subfigure}[b]{0.24\textwidth}
        \centering
        \includegraphics[width=\textwidth]{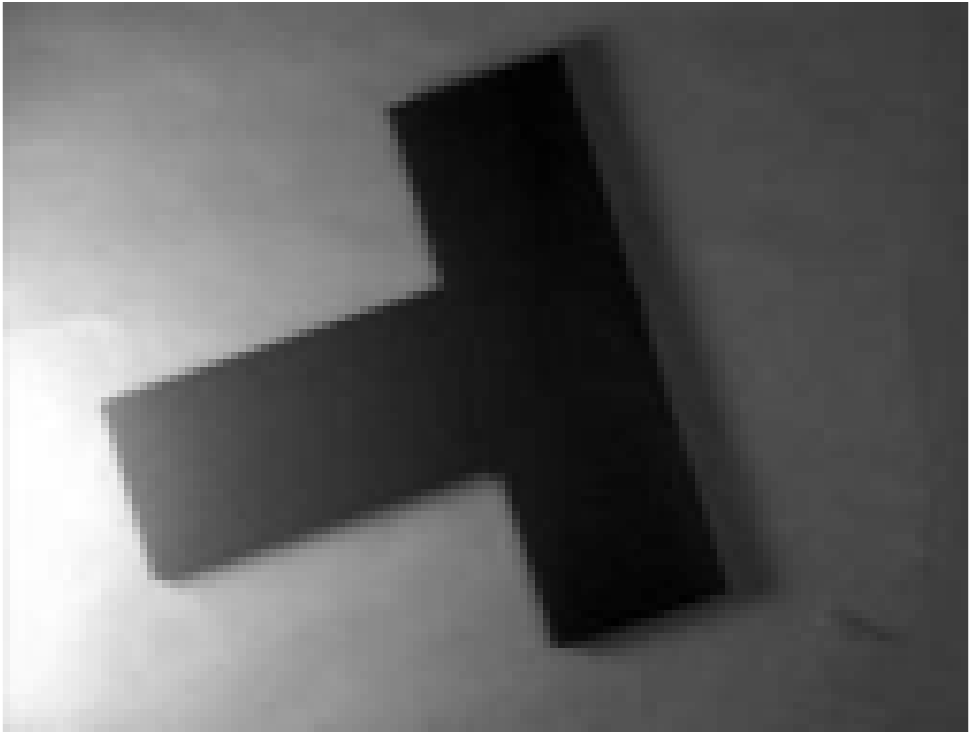}
        \vspace{-0.2em}
        \caption{Image}
        \label{fig:lif_origin}
    \end{subfigure}
    \hfill
    \begin{subfigure}[b]{0.24\textwidth}
        \centering
        \includegraphics[width=\textwidth]{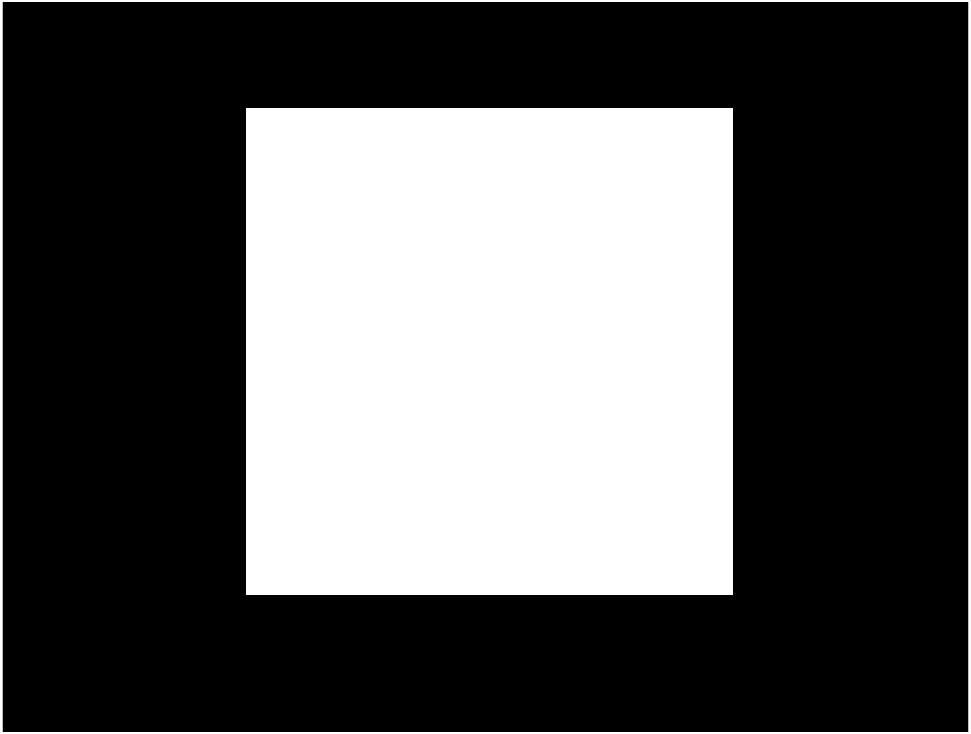}
        \vspace{-0.2em}
        \caption{Initial State}
        \label{fig:lif_initial}
    \end{subfigure}
    \hfill
    \begin{subfigure}[b]{0.24\textwidth}
        \centering
        \includegraphics[width=\textwidth]{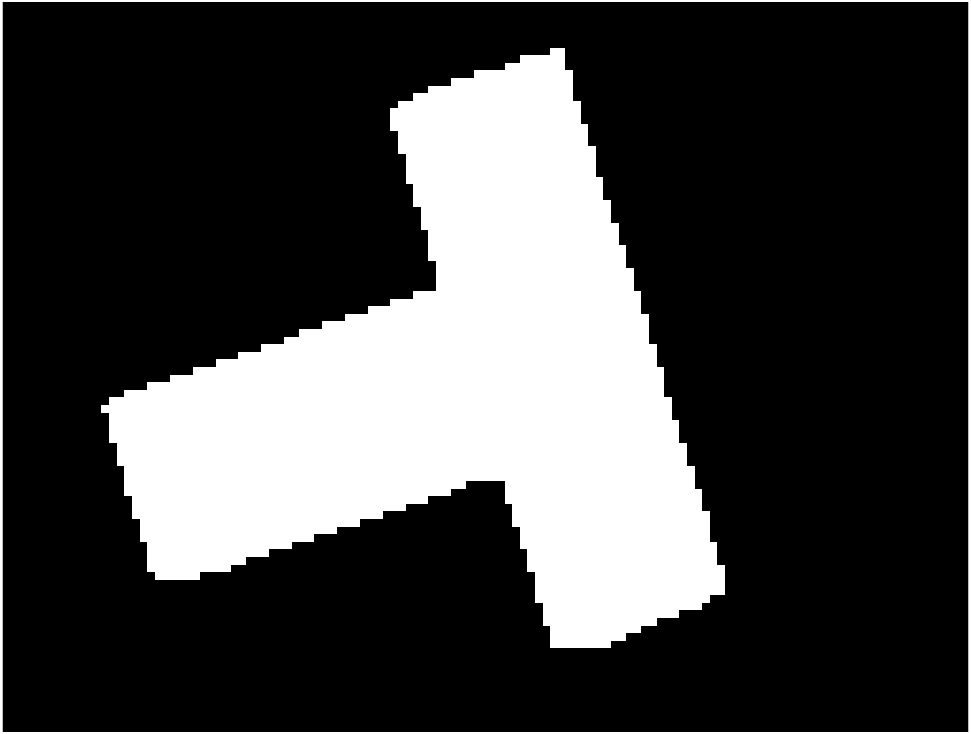}
        \vspace{-0.2em}
        \caption{Steady State}
        \label{fig:lif_result}
    \end{subfigure}
    \hfill
    \begin{subfigure}[b]{0.24\textwidth}
        \centering
        \includegraphics[width=\textwidth]{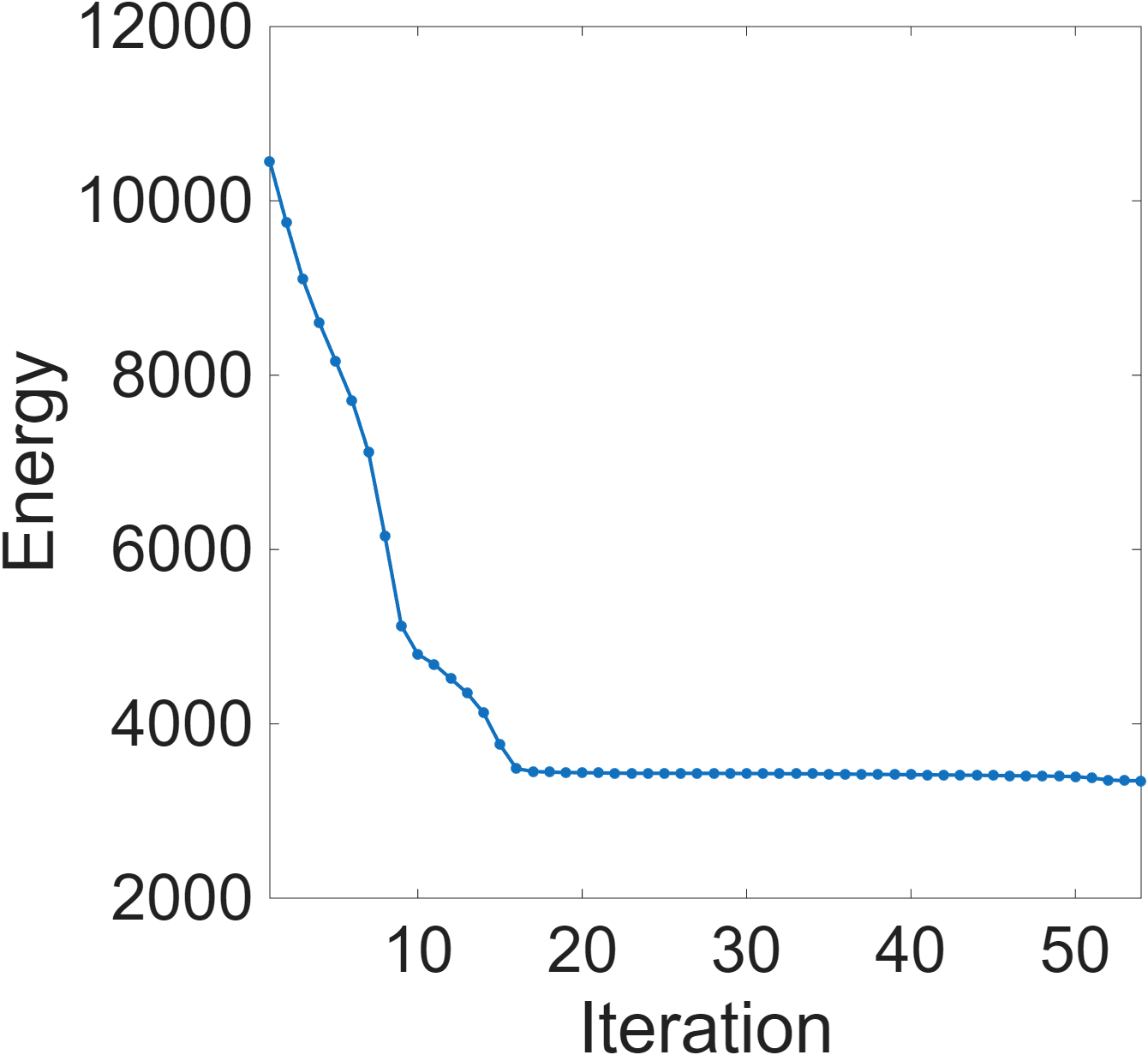}
        \caption{Energy Decay}
        \label{fig:lif_energy}
    \end{subfigure}
    
    \caption{Numerical verification of the proposed method using the LIF model. (a)--(c) The algorithm successfully overcomes intensity inhomogeneity by utilizing local intensity information to guide the contour. (d) The energy decreases monotonically, empirically confirming the unconditional stability claimed in Theorem \ref{thm:unconditional_stability} for the weighted-quantile-filter-based iteration.}
    \label{fig:lif_experiment}
\end{figure}

Figure \ref{fig:lif_experiment}(a)--(c) illustrate the robustness of the proposed method. The algorithm successfully navigates the complex energy functional to recover the accurate boundary of the object. This numerically verifies that our quantile filtering scheme effectively approximates the gradient flow of complex energy functionals.

Furthermore, the corresponding energy curve is plotted in Figure \ref{fig:lif_experiment}(d). As predicted by the stability assertion in Theorem \ref{thm:unconditional_stability}, the total energy exhibits a monotonic decay throughout the iterations, stabilizing rapidly. This result demonstrates that the proposed framework maintains its energy-dissipation property even when applied to complex functionals involving local convolutions, providing a numerical validation of the theoretical stability analysis.

\subsubsection{Comparison of Efficiency with Level Set Method}
\label{sec:efficiency_comparison}

To concretely demonstrate the computational advantages of our proposed continuous weighted-quantile-filter-based framework, we compare its efficiency against a classical level set method \cite{Chunming_Li_2008} solving the same LIF model.

Classical level set methods, which explicitly evolve the interface via gradient descent, are bounded by the Courant-Friedrichs-Lewy (CFL) stability condition, which dictates the maximum allowable time step. In contrast, our weighted-quantile-filter-based scheme guarantees unconditional energy stability and avoids the pinning effect. To ensure a fair comparison of algorithmic efficiency, we evaluate both methods using their respective appropriate time steps that maintain numerical stability, while deliberately keeping the fidelity parameter $\lambda = 3 \times 10^{-3}$ ($\tilde{\lambda} = 0.168$) and the initial conditions identical. 

The classical level set method demands 295 iterations to accurately capture the target boundary. Simultaneously, our framework converges to the same precise boundary in 47 iterations. This disparity demonstrates that despite both methods operating under their respective stable configurations, the quantile filter approach achieves faster convergence in terms of iteration count.

While this subsection focused on image segmentation, the mathematical structure of the problem is entirely consistent with interface problems in physics. In the following subsection, we demonstrate the generality of our framework by extending it to topology optimization in fluid dynamics.

\subsection{Stability Verification via Physics Constraints: Topology Optimization in Stokes Flow}
\label{subsec:stability_stokes}

To rigorously extend the validation of Theorem \ref{thm:unconditional_stability} to physics-driven problems, we apply our framework to topology optimization in fluid dynamics. Unlike the image segmentation task where the data is static, this problem involves a state equation constraint governed by the Stokes equations. This serves as a stress test for the algorithm's stability when the driving force is dynamically updated by solving a partial differential equation at each iteration.

\subsubsection{Model Formulation and Force Calculation}

We consider the optimal distribution of fluid within a design domain $\Omega$ to minimize energy dissipation. The flow is governed by the Stokes equations with a Brinkman penalization term to model the solid structures, adopting the well-established porous medium approach in fluid topology optimization \cite{Borrvall_2002, Chen_2022, Chen_2014}. The state variables velocity $\mathbf{v}$ and pressure $p$ satisfy:
\begin{equation}
    \begin{aligned}
    -\nabla \cdot (\eta D(\mathbf{v})) + \alpha(\phi) \mathbf{v} + \nabla p &= \mathbf{f} \quad \text{in } \Omega, \\
    \nabla \cdot \mathbf{v} &= 0 \quad \text{in } \Omega,
    \end{aligned}
    \label{eq:stokes_state}
\end{equation}
where $D(\mathbf{v})$ is the strain rate tensor and $\eta$ is the viscosity. The inverse permeability $\alpha(\phi)$ acts as the penalization parameter, defined as $\alpha(\phi) = \bar{\alpha}G_\tau * (1-\phi)$ to distinguish between fluid and solid regions.

Analogous to the local parameter update in the LIF model, the driving forces $F_i$ for the quantile filtering scheme are derived from the first variation of the energy functional. At each iteration $k$, given $\phi^k$, we solve \eqref{eq:stokes_state} for $\mathbf{v}^k$ and update the force field as:
\begin{equation}
    F_1^{k+1} = 0, \quad F_2^{k+1} = \frac{1}{2} \bar{\alpha} G_\tau * |\mathbf{v}^k|^2.
    \label{eq:stokes_force}
\end{equation}
These force terms $F_i$ are then substituted into the quantile filtering scheme to evolve the interface.

\subsubsection{Handling of Volume Constraint}

While the model formulation above defines the energy minimization framework, a critical step in the numerical implementation is the preservation of the fluid volume. To strictly enforce the global constraint $\int_{\Omega} \phi \, d\textbf{x} = V_{target}$ during the optimization process, we employ an augmented Lagrangian approach coupled with a root-finding algorithm. 

Mathematically, the volume constraint is incorporated into the energy functional by introducing a Lagrange multiplier $\Lambda$. The total energy becomes:
\begin{equation}
    \label{eq:energy_augmented}
    \mathcal{E}_{total}(\phi, \textbf{c}, \Lambda) = \mathcal{E}_\tau(\phi, \textbf{c}) + \Lambda \left( \int_{\Omega} \phi \, d\textbf{x} - V_{target} \right).
\end{equation}
In the context of our weighted quantile filter scheme \eqref{eq:median_update}, the inclusion of the linear volume term manifests as a global shift in the threshold level. Specifically, the updating rule is modified to:
\begin{equation}
    \label{eq:scheme_augmented}
    \phi^{k+1}(\textbf{x}) = \sup \left\{ \mu \in [0,1] : \int_{\{\textbf{y} : \phi^k(\textbf{y}) \ge \mu\}} G_\tau(\textbf{x}-\textbf{y}) \, d\textbf{y} \geq T(\textbf{x}) + \Lambda^k \right\}.
\end{equation}
where $\Lambda^k$ is a scalar offset corresponding to the Lagrange multiplier. 

In our numerical implementation, finding the optimal $\Lambda^k$ that satisfies the volume constraint is equivalent to finding the root of the monotonic function $V(\Lambda) = \int_{\Omega} \phi^{k+1}(\Lambda) \, d\textbf{x} - V_{target}$. We solve this efficiently using the bisection method at each iteration. This ensures that the generated $\phi^{k+1}$ strictly satisfies the prescribed volume fraction $\beta$, where $V_{target} = \beta |\Omega|$, with a tolerance of $10^{-5}$. With the specific handling of constraints established, we now proceed to validate the stability of the proposed framework.

While the unconditional energy stability for the unconstrained functional was established in Theorem \ref{thm:unconditional_stability}, it is crucial to ensure that the introduction of the Lagrange multiplier and the root-finding step preserve this monotonic energy decay. The following proposition rigorously justifies that our scheme remains unconditional energy stability under the strict volume constraint.

\begin{proposition}[Energy Stability under Volume Constraint]
Let $V_{target} = \int_{\Omega} \phi^0 d\mathbf{x}$. Let $\{(\phi^k, \mathbf{c}^k)\}$ be the sequence generated by the shifted thresholding rule \eqref{eq:scheme_augmented}, where $\Lambda^k$ is chosen such that $\int_{\Omega} \phi^{k+1} d\mathbf{x} = V_{target}$. Then, the sequence is unconditionally energy stable:
\begin{equation}
    \mathcal{E}_\tau(\phi^{k+1}, \mathbf{c}^{k+1}) \leq \mathcal{E}_\tau(\phi^k, \mathbf{c}^k).
\end{equation}
\end{proposition}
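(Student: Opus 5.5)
The plan is to mimic the proof of Theorem~\ref{thm:unconditional_stability}, with the unconstrained minimization replaced by a minimization over the volume-constrained admissible set
\[
\mathcal{A} := \Big\{ \phi \in BV(\Omega;[0,1]) : \int_\Omega \phi \, d\mathbf{x} = V_{target} \Big\}.
\]
The parameter step is unchanged: $\mathbf{c}^{k+1}$ minimizes $\mathcal{E}_\tau(\phi^k,\cdot)$, so $\mathcal{E}_\tau(\phi^k,\mathbf{c}^{k+1}) \le \mathcal{E}_\tau(\phi^k,\mathbf{c}^k)$ exactly as in \eqref{eq:decay_c}. All the work is therefore in showing $\mathcal{E}_\tau(\phi^{k+1},\mathbf{c}^{k+1}) \le \mathcal{E}_\tau(\phi^k,\mathbf{c}^{k+1})$. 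Note that $\phi^k \in \mathcal{A}$ for every $k$: for $k=0$ this holds by the choice $V_{target}=\int_\Omega\phi^0\,d\mathbf{x}$, and for $k\ge 1$ it holds by the choice of $\Lambda^{k-1}$.

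\textbf{Step 1: the shifted rule solves a Lagrangian problem.} I would show that $\phi^{k+1}$ defined by \eqref{eq:scheme_augmented} is a global minimizer of
\[
\Phi_\Lambda(\phi) := \mathcal{E}_\tau(\phi,\mathbf{c}^{k+1}) + \mathcal{M}_k(\phi) + \Lambda \int_\Omega \phi \, d\mathbf{x}
\]
over $BV(\Omega;[0,1])$, with $\Lambda$ equal to $\Lambda^k$ up to the normalization factor fixed in the Exact Iterative Scheme theorem. The argument repeats that proof verbatim. The coupled terms cancel, and the problem decouples into pointwise convex problems
\[
J_{\mathbf{x}}(\xi) + c\,\Lambda\,\xi .
\]
The added linear term shifts the weak derivative $2\big(T(\mathbf{x}) - S(\xi)\big)$ by a constant, which is exactly the threshold shift $T \mapsto T + \Lambda^k$ appearing in \eqref{eq:scheme_augmented}. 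The sign and scaling conventions must be checked so that the shift matches $+\Lambda^k$, with the constant absorbed as in the paper's treatment of $\tilde{\lambda}$.

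\textbf{Step 2: the constrained comparison.} Since $\phi^{k+1}$ minimizes $\Phi_{\Lambda}$ globally and, by the choice of $\Lambda^k$, lies in $\mathcal{A}$, it also minimizes $\mathcal{E}_\tau(\cdot,\mathbf{c}^{k+1}) + \mathcal{M}_k$ over $\mathcal{A}$. The reason is that on $\mathcal{A}$ the multiplier term equals the constant $\Lambda V_{target}$. Because $\phi^k \in \mathcal{A}$, we may compare against $\phi^k$. Combining this with $\mathcal{M}_k \ge 0$ and $\mathcal{M}_k(\phi^k)=0$, the same chain of inequalities as in Theorem~\ref{thm:unconditional_stability} gives
\[
\mathcal{E}_\tau(\phi^{k+1},\mathbf{c}^{k+1}) \le \mathcal{E}_\tau(\phi^{k+1},\mathbf{c}^{k+1}) + \mathcal{M}_k(\phi^{k+1}) \le \mathcal{E}_\tau(\phi^k,\mathbf{c}^{k+1}) .
\]
Concatenating this with the parameter step yields the claim.

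\textbf{Main obstacle: existence of an exact $\Lambda^k$.} The delicate point is that a $\Lambda^k$ with $\int_\Omega \phi^{k+1}(\Lambda^k)\,d\mathbf{x} = V_{target}$ exactly must exist. The map $\Lambda \mapsto \phi^{k+1}(\Lambda)$ is pointwise nonincreasing, since raising the threshold lowers the supremum. However, it may jump, because $S(\xi)$ can have flat pieces and the sup-selection is then discontinuous. At such a jump value of $\Lambda$ the pointwise minimizer is non-unique, and the set of minimizers is an interval $[\xi_-(\mathbf{x}),\xi_+(\mathbf{x})]$.

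I would handle this by working with the full minimizer set of $\Phi_\Lambda$. This set is convex, and the volume map takes an interval of values on it. Monotonicity, together with the limits $\phi \equiv 1$ as $\Lambda \to -\infty$ and $\phi \equiv 0$ as $\Lambda \to +\infty$, then guarantees that some $\Lambda^k$ and some minimizer in $\mathcal{A}$ exist. This is an intermediate-value argument on a monotone set-valued map.

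The proposition as stated presupposes that such a $\Lambda^k$ is chosen, so it suffices to note that Step 2 uses only two facts: $\phi^{k+1}$ is some minimizer of $\Phi_{\Lambda^k}$, and it satisfies the volume constraint. Both hold by hypothesis. The existence question is therefore only needed to justify the bisection procedure, not the stability estimate itself.
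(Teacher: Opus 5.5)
Your proposal is correct and matches the paper's proof essentially step for step. The paper also shows that the shifted rule \eqref{eq:scheme_augmented} globally minimizes the Lagrangian $\mathcal{E}_\tau(\cdot,\mathbf{c}^{k+1})+\mathcal{M}_k+\Lambda^k(\int_\Omega\phi\,d\mathbf{x}-V_{target})$, concludes that $\phi^{k+1}$ minimizes the alternative energy over the volume-constrained set because the multiplier term vanishes there, and then compares against $\phi^k$ using the same chain of inequalities as Theorem~\ref{thm:unconditional_stability}; your remarks on the normalization of $\Lambda$ and on the existence of an exact multiplier go beyond the paper, which simply asserts that bisection produces it, and you are right that the statement takes this as a hypothesis.
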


\begin{proof}
    See Appendix \ref{app:volume_stability}.
\end{proof}

With both the theoretical stability and the specific handling of physical constraints firmly established, we now proceed to numerically validate these properties.

\subsubsection{Validation of Stability: Energy Decay}

The validation of the unconditional stability property focuses on the monotonicity of the energy evolution for these physics constrained problems. To demonstrate the universality and robustness of our framework, we conduct stress tests on two distinct benchmark examples with different Dirichlet boundary conditions:

\begin{enumerate}
    \item A ``Flow Contraction'' problem. As shown in Figure \ref{fig:stokes_contraction}(a), fluid enters through the entire left boundary with a height of $1$ and is constrained to exit through a narrow outlet of height $1/3$ located at the center of the right boundary. This setup imposes a severe geometric constraint, challenging the algorithm to form a smooth converging channel while minimizing energy dissipation.
    
    \item A ``Two-Inlet Two-Outlet'' problem. As illustrated in Figure \ref{fig:stokes_double_pipe}(a), the domain features two separate inlets on the left boundary and two separate outlets on the right. Each opening has a height of $1/6$, symmetrically distributed. This configuration serves as a stress test to verify whether the algorithm will preserve the topological separation of the flow paths or merge them to minimize resistance, depending on the energy functional.
\end{enumerate}

\begin{figure}[htbp]
    \centering
    \begin{subfigure}[b]{0.29\textwidth}
        \centering
        \includegraphics[width=\textwidth]{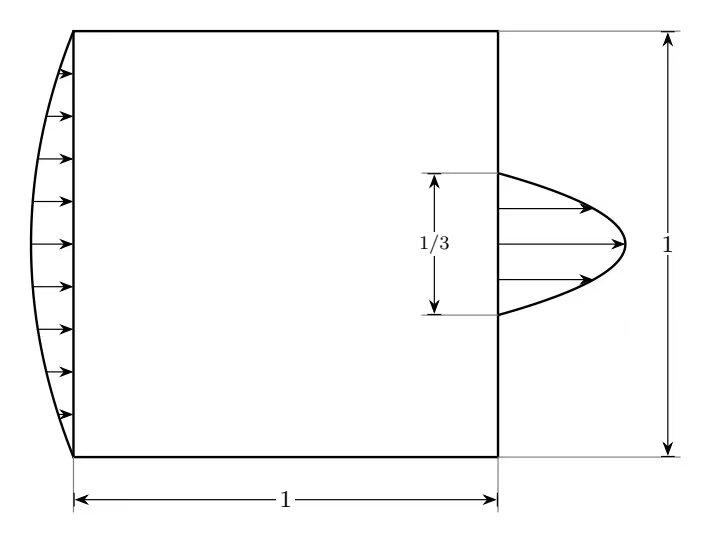} 
        \vspace{-1em}
        \caption{Wide $\to$ Narrow}
        \label{fig:cont_setup}
    \end{subfigure}
    \hfill
    \begin{subfigure}[b]{0.21\textwidth}
        \centering
        \includegraphics[width=\textwidth]{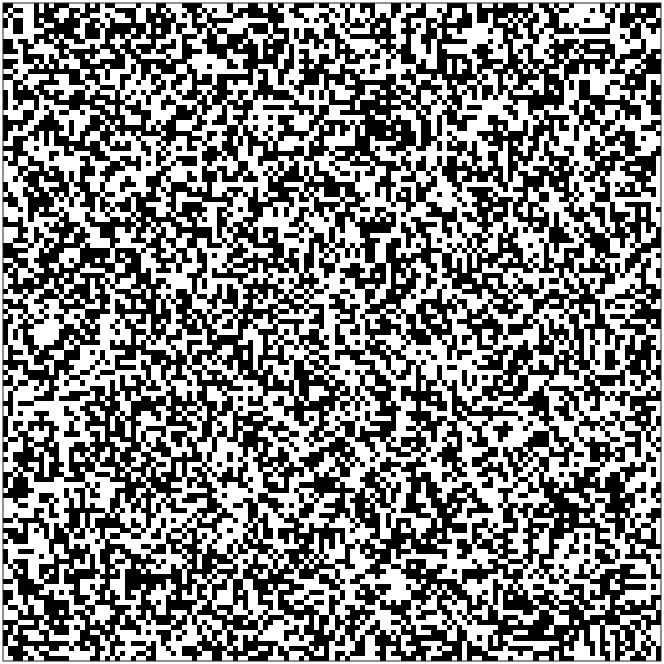}
        \vspace{-0.5em}
        \caption{Initial}
        \label{fig:cont_init}
    \end{subfigure}
    \hfill
    \begin{subfigure}[b]{0.21\textwidth}
        \centering
        \includegraphics[width=\textwidth]{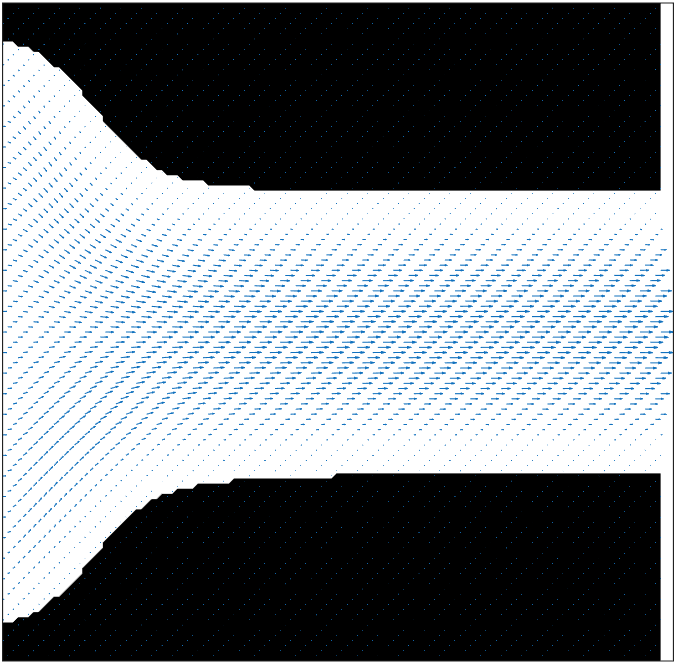}
        \vspace{-0.5em}
        \caption{Final}
        \label{fig:cont_final}
    \end{subfigure}
    \hfill
    \begin{subfigure}[b]{0.23\textwidth}
        \centering
        \includegraphics[width=\textwidth]{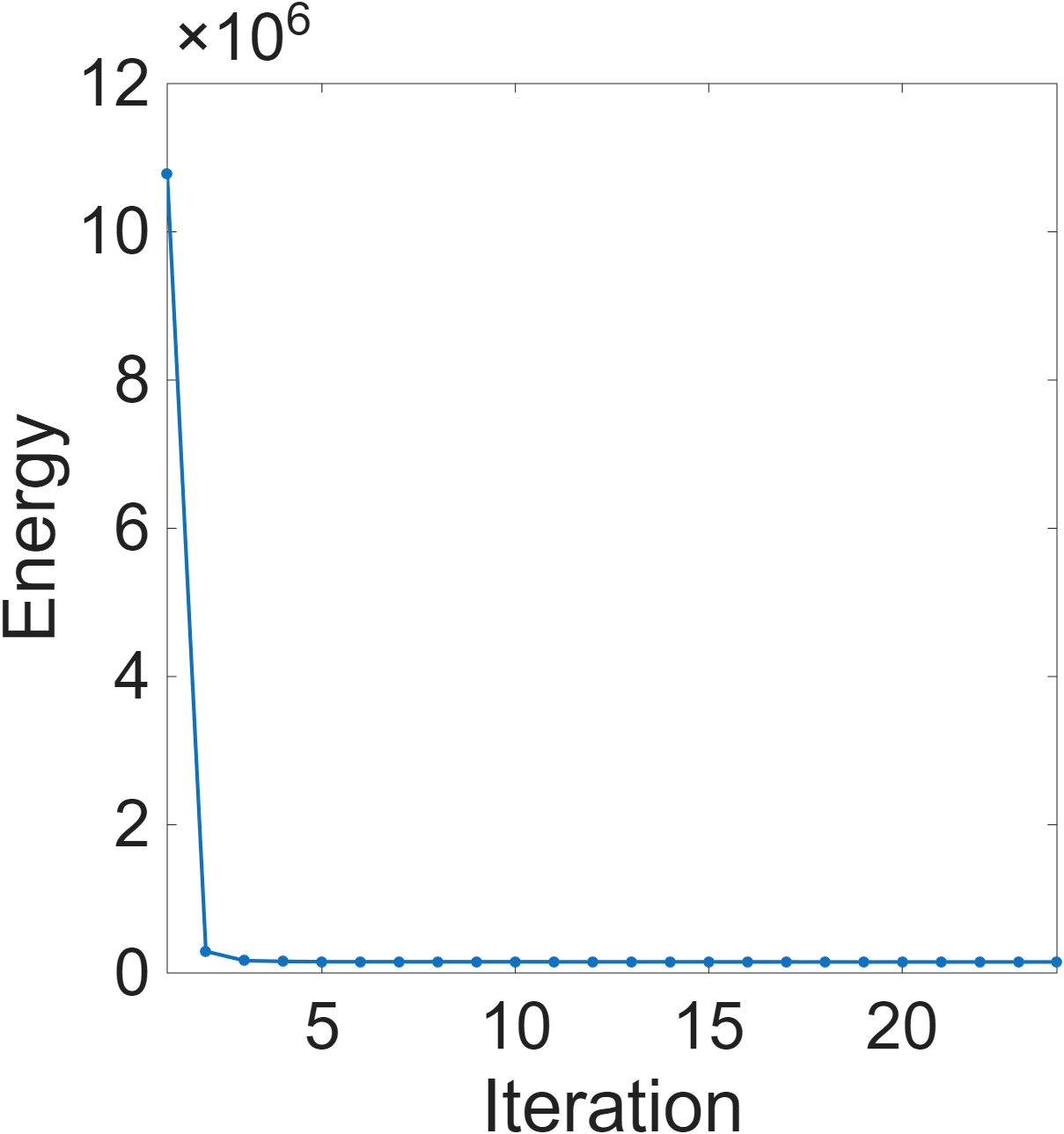}
        \caption{Energy Decay}
        \label{fig:cont_energy}
    \end{subfigure}
    
    \caption{Numerical verification of the proposed framework on the Flow Contraction problem. (a) Domain setup with a full-height inlet transitioning to a restricted outlet of height $1/3$. (b) Initial random state. (c) The algorithm successfully forms a smooth converging channel. (d) The energy curve confirms unconditional stability.}
    \label{fig:stokes_contraction}
\end{figure}

\begin{figure}[htbp]
    \centering
    \begin{subfigure}[t]{0.34\textwidth}
        \centering
        \includegraphics[width=\textwidth]{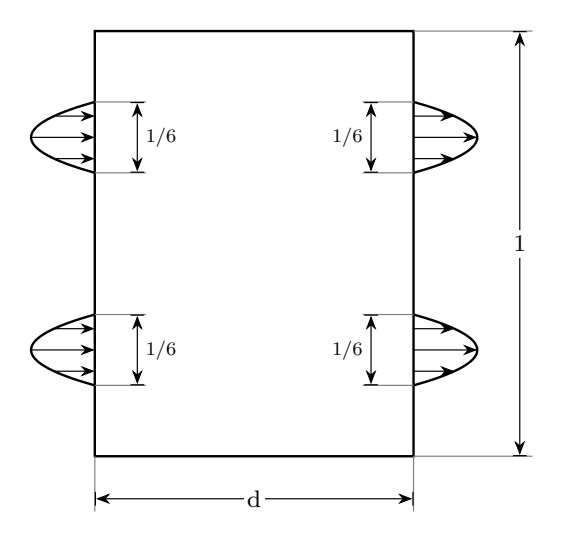} 
        \caption{2 Inlets $\to$ 2 Outlets}
        \vspace{1mm}
        \label{fig:exp1_bc}
    \end{subfigure}
    \hfill
    \begin{subfigure}[t]{0.16\textwidth}
        \centering
        \includegraphics[width=\textwidth]{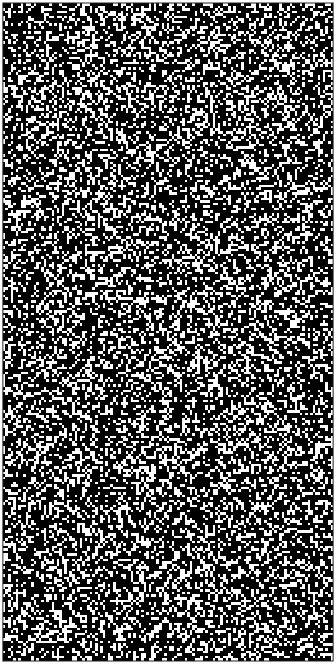}
        \caption{Initial}
        \label{fig:exp1_init}
    \end{subfigure}
    \hfill
    \begin{subfigure}[t]{0.16\textwidth}
        \centering
        \includegraphics[width=\textwidth]{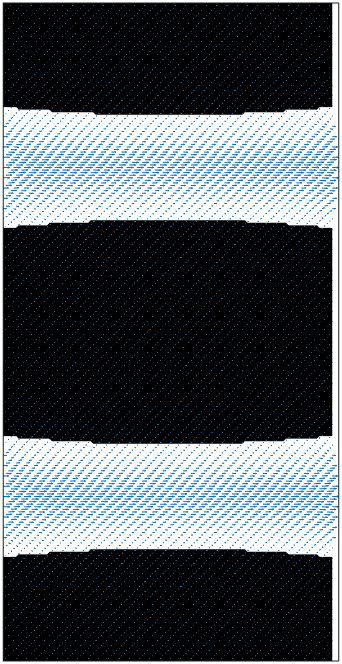}
        \caption{Final}
        \label{fig:exp1_final}
    \end{subfigure}
    \hfill
    \begin{subfigure}[t]{0.30\textwidth}
        \centering
        \includegraphics[width=\textwidth]{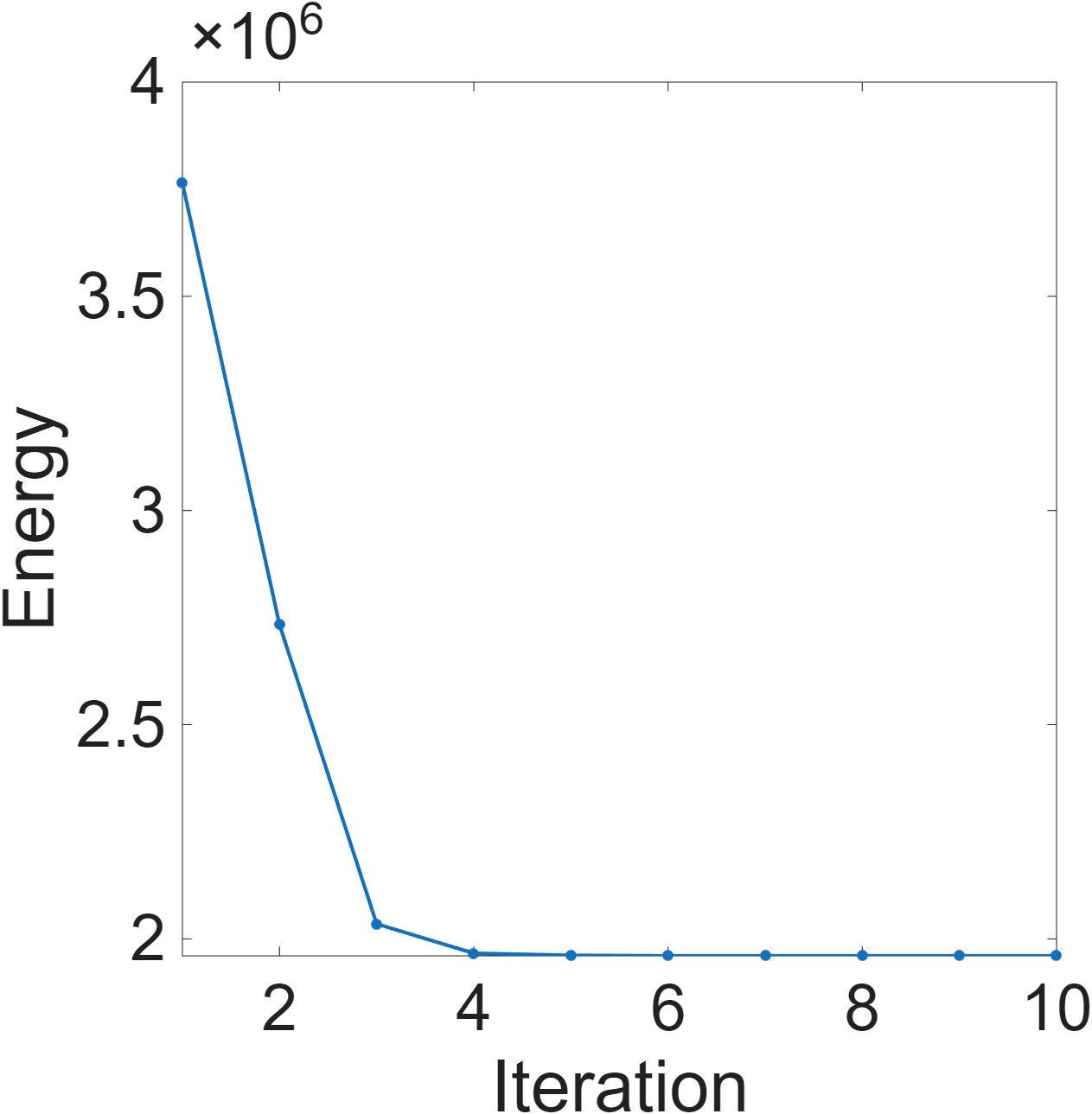}
        \caption{Energy Decay}
        \label{fig:exp1_energy}
    \end{subfigure}
    
    \caption{Numerical verification of the proposed framework on the ``Two-Inlet Two-Outlet'' problem. (a) Domain setup featuring two separate inlets and outlets, each with a height of $1/6$. (b) Initial random state. (c) The energy curve confirms unconditional stability. (d) The algorithm successfully recovers a double-channel structure.}
    \label{fig:stokes_double_pipe}
\end{figure}

Figures \ref{fig:stokes_contraction} and \ref{fig:stokes_double_pipe} present the optimization processes for the ``Flow Contraction'' problem and the ``Two-Inlet Two-Outlet'' problem, respectively. To test the robustness of the algorithm against local minima, we initialize both designs with random noise, representing randomized initial material distributions.

The visualization of the results illustrates the capability of the proposed method to self-organize from a random state in different geometric settings. In both cases, the algorithm successfully navigates the complex energy functional, where the ``force'' is implicitly determined by the PDE solution, to recover a smooth channel structure. This numerically verifies that our quantile filtering scheme effectively approximates the gradient flow even when the energy functional involves global PDE constraints.

Furthermore, the corresponding energy curves are plotted in Figures \ref{fig:stokes_contraction}(c) and \ref{fig:stokes_double_pipe}(c). As predicted by the stability assertion in Theorem \ref{thm:unconditional_stability}, the total energy dissipation exhibits a monotonic decay throughout the iterations for both examples, stabilizing rapidly. These results collaboratively demonstrate that the proposed framework maintains its energy-dissipation property across different physics-driven applications, providing strong numerical validation of the theoretical stability analysis.

\section{Conclusions}
\label{sec:conclusion}

In this paper, we have proposed a general and robust numerical framework based on the quantile filter scheme for solving variational problems with perimeter regularization. By interpreting the quantile filter as a distinct threshold dynamics solver, we established a rigorous theoretical foundation for the method. Specifically, we proved the unconditional energy stability of the iterative scheme and demonstrated that, for generic data, the proposed convex relaxation naturally enforces a binary solution without the need for explicit penalization terms. A key advantage of our approach is its capability to effectively mitigate the pinning effect, which commonly plagues traditional threshold dynamics methods, without sacrificing computational efficiency.

Extensive numerical experiments have verified the versatility and effectiveness of the proposed framework. We successfully applied the method to two distinct fields: image segmentation (using Chan-Vese and local intensity fitting models) and topology optimization for Stokes flow. The results indicate that our algorithm not only accurately captures complex topological changes and preserves volume constraints but also maintains stability across a wide range of parameters. The seamless integration of the level-set formulation with the quantile filtering operation provides a simple yet powerful tool for interface related optimization problems.

While our results are encouraging, several avenues for future research remain open. Specifically, the current framework is designed primarily for binary-phase problems; extending the weighted quantile filter based approach to multiphase problems with multiple junctions remains a challenging but worthwhile direction \cite{Laux_2018}. Beyond the geometric complexity of tracking triple junctions, extending our method to ternary or multiphase physical systems, such as the ternary Ohta-Kawasaki model, poses additional challenges due to the complexity of the underlying physical laws, particularly the non-local long-range interaction terms \cite{Choi_2022, Luo_2024, Xu_2019}. Furthermore, although we have addressed topology optimization in Stokes flow, applying this method to Navier-Stokes equations or fluid-structure interaction problems would require further investigation into the coupling strategies \cite{Feppon_2020}.

\appendix

\section{Proof of Existence and Uniqueness}
\label{app:existence_uniqueness}

To ensure the proof of Lemma \ref{lem:existence_uniqueness} is fully self-contained, we first explicitly state the foundational definitions and theorems from functional analysis that are invoked.

\subsection{Preliminaries}

We begin with the geometric concept of dentability and its relationship to compact sets in a Banach space.

\begin{definition}[Dentable Set \cite{rieffel1967dentable}]
A subset $\mathcal{K}$ of a Banach space is called dentable if for every $\epsilon > 0$ there is a $b \in \mathcal{K}$ such that $b \notin \overline{c}(\mathcal{K} - \text{ball}(b, \epsilon))$, where $\overline{c}$ denotes the closed convex hull.
\end{definition}

\begin{proposition}[Rieffel \cite{rieffel1967dentable}] \label{prop:rieffel}
Any relatively norm compact subset of a Banach space is dentable.
\end{proposition}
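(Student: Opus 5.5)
We need to prove Rieffel's proposition: any relatively norm compact subset $\mathcal{K}$ of a Banach space $X$ is dentable. Equivalently, for every $\epsilon>0$ there is $b\in\mathcal{K}$ with $b\notin\overline{c}(\mathcal{K}\setminus \mathrm{ball}(b,\epsilon))$. If $\mathcal{K}$ is empty or a single point the claim is trivial, so assume $\mathcal{K}$ has at least two points.

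\emph{Reduction to a compact convex set.} Replace $\mathcal{K}$ by $C=\overline{c}(\mathcal{K})$. By Mazur's theorem, the closed convex hull of a relatively compact set in a Banach space is compact, so $C$ is a nonempty compact convex set. We will find a point $b\in\overline{\mathcal{K}}$ with the required property relative to $\mathcal{K}$.

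\emph{Choosing $b$ as an exposed-type point.} A strongly exposed point would suffice, but a more elementary route uses Krein--Milman together with Milman's converse.
\begin{itemize}
\item By Krein--Milman, $C$ has extreme points.
\item By Milman's converse, every extreme point of $C=\overline{c}(\mathcal{K})$ lies in $\overline{\mathcal{K}}$.
\end{itemize}
Fix an extreme point $b$ of $C$.

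\emph{Showing $b$ is a denting point.} Suppose, for contradiction, that
\[
b\in D:=\overline{c}\bigl(\mathcal{K}\setminus \mathrm{ball}(b,\epsilon)\bigr).
\]
The set $\overline{\mathcal{K}\setminus \mathrm{ball}(b,\epsilon)}$ is compact and lies in $\overline{\mathcal{K}}\setminus \mathrm{ball}^{\circ}(b,\epsilon)$; in particular, it does not contain $b$. Now $D$ is a compact convex subset of $C$, and $b\in D$ is still extreme in $D$, since extremality passes to convex subsets containing the point. Applying Milman's converse to $D$ puts $b$ in $\overline{\mathcal{K}\setminus \mathrm{ball}(b,\epsilon)}$, which forces $\|b-b\|\ge\epsilon$, a contradiction.

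\emph{Making $b\in\mathcal{K}$.} The contradiction gives $b\notin D$, but the definition requires $b\in\mathcal{K}$, while we only know $b\in\overline{\mathcal{K}}$. Since $D$ is closed and $b\notin D$, there is $\delta\in(0,\epsilon/2)$ with $\mathrm{dist}(b,D)>\delta$. Pick $b'\in\mathcal{K}$ with $\|b'-b\|<\delta$.

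We run the argument above with $\epsilon$ replaced by $\epsilon/2$, so that $b\notin D_{\epsilon/2}:=\overline{c}\bigl(\mathcal{K}\setminus \mathrm{ball}(b,\epsilon/2)\bigr)$. Then:
\begin{itemize}
\item $\mathcal{K}\setminus\mathrm{ball}(b',\epsilon)\subset\mathcal{K}\setminus\mathrm{ball}(b,\epsilon/2)$, so $\overline{c}\bigl(\mathcal{K}\setminus\mathrm{ball}(b',\epsilon)\bigr)\subset D_{\epsilon/2}$.
\item Choose $\delta$ below $\mathrm{dist}(b,D_{\epsilon/2})$, so that $b'\notin D_{\epsilon/2}$.
\end{itemize}
Hence $b'\notin\overline{c}\bigl(\mathcal{K}\setminus\mathrm{ball}(b',\epsilon)\bigr)$, which is exactly the dentability condition for $\epsilon$.

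\emph{Main obstacle.} The crux is Milman's converse to Krein--Milman, which states that if $\overline{c}(S)$ is compact then its extreme points lie in $\overline{S}$. I would prove it directly. Suppose an extreme point $e$ of $\overline{c}(S)$ is at distance $r>0$ from $\overline{S}$.
\begin{itemize}
\item Cover the compact set $\overline{S}$ by finitely many balls $B_i$ of radius $r/2$.
\item Let $C_i=\overline{c}(\overline{S}\cap B_i)$. Each $C_i$ is compact and convex, and $\overline{c}(S)=c\bigl(\bigcup_i C_i\bigr)$, since the convex hull of finitely many compact convex sets is compact.
\item Write $e$ as a convex combination of points $x_i\in C_i$. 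Extremality forces $e=x_i$ for some $i$.
\item Each $C_i$ lies within $r/2$ of $\overline{S}$, contradicting $\mathrm{dist}(e,\overline{S})=r$.
\end{itemize}
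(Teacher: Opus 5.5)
The paper gives no proof of this proposition. It is quoted from Rieffel as a black-box input to Stegall's principle, so your argument takes a genuinely different route: a correct, self-contained proof along classical lines.

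The steps hold up:
\begin{itemize}
\item Mazur makes $\overline{c}(\mathcal{K})$ compact, and Krein--Milman gives an extreme point $b$.
\item Milman's converse puts $b$ in $\overline{\mathcal{K}}$.
\item Applying Milman's converse again to the compact convex set $\overline{c}(\mathcal{K}\setminus\mathrm{ball}(b,\epsilon))$, in which $b$ would remain extreme, shows $b$ is a denting point.
\item Your covering proof of Milman's converse is the standard one and is sound.
\end{itemize}

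Compared with the bare citation, your route shows that norm compactness enters only through Mazur and Milman. It also proves more: every extreme point of $\overline{c}(\mathcal{K})$ is a denting point of $\overline{\mathcal{K}}$.

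Your last step, moving from $b\in\overline{\mathcal{K}}$ to $b'\in\mathcal{K}$, is not cosmetic. In the paper's application, Stegall's criterion needs every nonempty subset of the compact set $\mathcal{K}$ to be dentable. Such subsets are relatively compact but generally not closed. For example, the closed convex hull of $(0,1)\subset\mathbbm{R}$ has no extreme point in $(0,1)$, and your perturbation is exactly what handles that case.

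Three small repairs are needed.
\begin{itemize}
\item The empty set is not dentable under the quoted definition, since there is no $b\in\emptyset$. It is an excluded case rather than a trivial one, and the proposition should be read for nonempty sets, as in Stegall's theorem.
\item In the perturbation step, your first choice of $\delta$ relative to $\mathrm{dist}(b,D)$ is superfluous. What you need is $0<\delta<\min\{\epsilon/2,\mathrm{dist}(b,D_{\epsilon/2})\}$, with $\mathrm{dist}(b,\emptyset)=+\infty$; then both of your inclusions go through for either the open- or closed-ball convention.
\item In the proof of Milman's converse, center the covering balls at points of $\overline{S}$. Then each $C_i$ lies in a closed ball of radius $r/2$ about a point of $\overline{S}$, because closed balls are closed and convex. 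This is what gives $\mathrm{dist}(e,\overline{S})\le r/2<r$.
\end{itemize}
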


Next, we formally define a strong minimizer, which is central to establishing the uniqueness of the solution, as well as the concept of lower semicontinuity.

\begin{definition}[Strong Minimizer \cite{Loewen_2001}]
Let $X$ be a Banach space. For a function $f: X \to \mathbb{R} \cup \{+\infty\}$, a point $x_0 \in X$ is a strong minimizer if $f(x_0) = \inf_X f$ and every sequence $(x_n)$ along which $f(x_n) \to \inf_X f$ obeys $\|x_n - x_0\| \to 0$.
\end{definition}

\begin{proposition}[Strict Minimality \cite{Loewen_2001}] \label{prop:strict_min}
A strong minimizer is necessarily a strict minimizer, that is, $f(x) > f(x_0)$ for every $x \neq x_0$. Consequently, if a strong minimizer exists, it is unique.
\end{proposition}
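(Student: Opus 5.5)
The statement to prove is Proposition \ref{prop:strict_min}: a strong minimizer $x_0$ of $f$ is a strict minimizer, and hence the unique minimizer. I would argue by contradiction, using only the definition of a strong minimizer and the constant-sequence trick.

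Suppose $x_0$ is a strong minimizer. Suppose also that some $x_1 \neq x_0$ satisfies $f(x_1) \le f(x_0)$. Since $f(x_0) = \inf_X f$, we have $f(x_1) \ge \inf_X f = f(x_0)$. Together these give $f(x_1) = \inf_X f$, so $x_1$ is itself a global minimizer.

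Next consider the constant sequence $x_n := x_1$ for all $n$. Trivially $f(x_n) = \inf_X f$ for every $n$, so $f(x_n) \to \inf_X f$. The definition of strong minimizer then forces $\|x_n - x_0\| \to 0$. But $\|x_n - x_0\| = \|x_1 - x_0\| > 0$ is a positive constant, which is a contradiction. Hence $f(x) > f(x_0)$ for every $x \neq x_0$, i.e.\ $x_0$ is a strict minimizer.

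Uniqueness follows at once. If $y_0$ were another minimizer with $y_0 \neq x_0$, strictness would give $f(y_0) > f(x_0) = \inf_X f$, contradicting $f(y_0) = \inf_X f$.

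There is no real obstacle here. The only subtlety is to note that $\inf_X f$ must be finite (since $f(x_0)$ is real), so the comparisons above are meaningful, and that points with $f = +\infty$ are automatically strictly worse than $x_0$.
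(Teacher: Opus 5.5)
Your argument is correct. The paper gives no proof of its own and simply cites Loewen--Wang, and your constant-sequence argument is the standard justification of that fact. One minor caveat: your closing remark that $f(x_0)$ must be real does not follow from the stated definition, which allows $f \equiv +\infty$. Nothing in your argument needs it, though, since a constant sequence with value $+\infty$ still converges to $\inf_X f = +\infty$ in the extended reals.
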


\begin{definition}[Lower Semicontinuity \cite{Lieb_2001}]
A functional $f: X \to \mathbb{R} \cup \{+\infty\}$ on a metric space $X$ is lower semicontinuous at a point $x_0 \in X$ if for every sequence $x_n \to x_0$, it holds that $\liminf_{n \to \infty} f(x_n) \ge f(x_0)$. It is lower semicontinuous on $X$ if this inequality holds for all points in $X$.
\end{definition}

Finally, we state Stegall's Variational Principle, which connects the geometric property of dentability with the existence of strong minimizers under linear perturbations.

\begin{theorem}[Stegall's Variational Principle \cite{Lassonde_2009, stegall1978optimization}] \label{thm:stegall}
Every nonempty subset of a closed bounded convex set $\mathcal{C}$ in a Banach space is dentable if and only if for every lower-bounded lower semicontinuous function $J: \mathcal{C} \to \mathbb{R}$ there exists a dense $G_\delta$ subset $\mathcal{G}$ of the dual space $X^*$ such that for each $h \in \mathcal{G}$, the perturbed functional $J + h$ attains a strong minimum.
\end{theorem}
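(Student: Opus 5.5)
I would prove the two implications separately. Most of the work goes into the forward direction ($\Rightarrow$), since that is the one used in Lemma~\ref{lem:existence_uniqueness} through Proposition~\ref{prop:rieffel}. For the converse I would only sketch a counterexample construction.

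\textbf{Forward direction: setup.} Assume every nonempty subset of $\mathcal{C}$ is dentable, and fix a lower-bounded lower semicontinuous $J:\mathcal{C}\to\mathbb{R}$. Let $M:=\sup_{x\in\mathcal{C}}\|x\|<\infty$. For $h\in X^*$ and $\alpha>0$, define the sublevel slice
\[
S(h,\alpha):=\{x\in\mathcal{C}: J(x)+h(x)<\inf_{\mathcal{C}}(J+h)+\alpha\}.
\]
For each $n\in\mathbb{N}$, let $U_n$ be the set of $h\in X^*$ for which some $\alpha>0$ gives $\operatorname{diam}S(h,\alpha)<1/n$. I would then show three things in order.

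\begin{enumerate}
\item[(i)] \emph{Each $U_n$ is open.} If $\|g-h\|<\alpha/(4M)$, then $|(J+g)-(J+h)|\le \alpha/4$ on $\mathcal{C}$. It follows that $S(g,\alpha/2)\subset S(h,\alpha)$, so $g\in U_n$.
\item[(ii)] \emph{Each $U_n$ is dense.}
\item[(iii)] \emph{Conclusion via Baire.} By the Baire category theorem in the complete space $X^*$, the set $\mathcal{G}=\bigcap_n U_n$ is a dense $G_\delta$. For $h\in\mathcal{G}$, pick $\alpha_n\downarrow 0$ with $\operatorname{diam}S(h,\alpha_n)<1/n$. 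We may assume the slices are nested, since shrinking $\alpha$ shrinks the slice. Any minimizing sequence for $J+h$ eventually enters every $S(h,\alpha_n)$, so it is Cauchy. It converges to some $x_0$ because $\mathcal{C}$ is closed, and lower semicontinuity gives $(J+h)(x_0)=\inf(J+h)$. Every other minimizing sequence also converges to $x_0$, so $x_0$ is a strong minimizer. Proposition~\ref{prop:strict_min} then gives uniqueness.
\end{enumerate}

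\textbf{Forward direction: the density step (ii).} This is the main obstacle. Given $h_0\in X^*$ and $\varepsilon>0$, I need $g$ with $\|g\|<\varepsilon$ and $h_0+g\in U_n$.

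First I would convert dentability into small slices. If $b\notin\overline{c}(D\setminus \mathrm{ball}(b,\delta))$, Hahn--Banach separation gives a functional $f$ with $\|f\|=1$ whose slice $\{x\in D: f(x)>\sup_D f-\beta\}$ lies inside $\mathrm{ball}(b,\delta)$. So every bounded $D\subset\mathcal{C}$ admits slices of arbitrarily small diameter.

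A linear functional alone cannot control the nonlinear $J$, so I would pass to the epigraph in $X\times\mathbb{R}$. Let $\varphi:=J+h_0$ and $m:=\inf\varphi$. Consider the bounded set
\[
E:=\{(x,t): x\in\mathcal{C},\ \varphi(x)\le t\le m+1\}.
\]
Dentability must be transferred to subsets of $\mathcal{C}\times[m,m+1]$. Dentability of all subsets of $\mathcal{C}$ is equivalent to the Radon--Nikod\'ym property of $\mathcal{C}$, and that property is stable under products with compact intervals. I would cite this transfer rather than prove it.

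Take a small slice of $E$ given by a functional $(f,s)$. Dividing by $-s$ after tilting so that $s<0$, I get $\{(x,t)\in E: t-\tfrac{1}{|s|}f(x)<\text{const}\}$. Tilting the functional toward $(0,-1)$ keeps $\|f\|/|s|$ below $\varepsilon$. This is the delicate quantitative point: one has to show the small-slice property survives such tilts. The standard device is to dent the set $\{(x,t): t\ge\varphi(x)-\varepsilon\|x-\cdot\|\}$ iteratively, as in Stegall's original argument.

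Projecting this slice onto $X$ yields $S(h_0+g,\alpha)$ with $g=-f/|s|$. Its diameter is small, which proves density.

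\textbf{Converse.} If some nonempty $D\subset\mathcal{C}$ is not dentable, I would build a bounded lower semicontinuous $J$ equal to $0$ on $\overline{D}$ and positive off $\overline{D}$. Non-dentability then implies that every tilted slice of $\overline{D}$ has diameter bounded below. Hence no perturbation $J+h$ can have a strong minimum, because minimizing sequences cannot be forced to converge. This contradicts the hypothesis.
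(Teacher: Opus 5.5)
The paper does not prove this statement; it is quoted from Stegall and Lassonde, so your sketch has to be measured against the standard proof in the literature. Your steps (i) and (iii) are correct and form the usual Baire-category frame.

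The gap is step (ii), and it carries essentially the whole content of the theorem. Dentability of $E$, in its small-slice form, gives you \emph{some} functional $(f,s)$ on $X\times\mathbb{R}$ with a thin slice, but says nothing about its direction: $s$ may be $\ge 0$, or $\|f\|/|s|$ may be huge. Tilting $(f,s)$ toward $(0,-1)$ produces a different slice, and its diameter is not controlled by the original one. So there is no lemma saying the small-slice property survives such tilts. Your suggestion to dent $\{(x,t): t\ge\varphi(x)-\varepsilon\|x-\cdot\|\}$ iteratively names a device but is not an argument.

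What you actually need is a density theorem due to Bourgain. For a bounded set all of whose subsets are dentable, the functionals $z^*$ with $\inf_{\alpha>0}\mathrm{diam}\,\{z\in E: z^*(z)>\sup_E z^*-\alpha\}=0$ are norm dense. Its proof uses an ingredient missing from your sketch: Bishop--Phelps cones $K=\{z:\varepsilon\|z\|\le z_0^*(z)\}$ around the target direction $z_0^*=(0,-1)$. A functional that separates a convex set from a translate of $K$ must be nonnegative on $K$. The Bishop--Phelps lemma then places it within $O(\varepsilon)$ of a positive multiple of $z_0^*$. Dentability is combined with these cones so that the separation produces thin slices, and this is how the slicing direction gets prescribed.

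You can either carry that out, or cite Bourgain's theorem, since you already cite the transfer of the Radon--Nikod\'ym property to $\mathcal{C}\times[m,m+1]$. The theorem says the strongly exposing functionals of the closed bounded convex set $\overline{\mathrm{co}}(E)$ are dense. A strongly exposing $(f,s)$ close to $(0,-1)$ has $s<0$, and slices of $E$ lie inside slices of $\overline{\mathrm{co}}(E)$ with the same supremum. Then $g=-f/|s|$ has small norm, and your projection step goes through once $\|g\|$ and the slice depth are small enough that the cap $t\le m+1$ is inactive.

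Your converse also needs a repair. If $J$ is only positive off $\overline{D}$, say $J=\mathrm{dist}(\cdot,\overline{D})^2$, minimizers of $J+h$ can leave $\overline{D}$, and the slice information about $\overline{D}$ no longer applies. Moreover, for large $h$ the claim that no $J+h$ has a strong minimum is false in general, because $J$ is bounded and $J+h$ then behaves like $h$.

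Take instead $J=0$ on $\overline{D}$ and $J=1$ on $\mathcal{C}\setminus\overline{D}$; this is lower semicontinuous because $\overline{D}$ is closed. If $\|h\|\,M<1/2$, every minimizing sequence of $J+h$ eventually lies in $\overline{D}$, hence in the slices $\{x\in\overline{D}: -h(x)>\sup_{\overline{D}}(-h)-\beta\}$. Non-dentability of $D$ gives $\varepsilon>0$ with $b\in\overline{\mathrm{co}}(D\setminus \mathrm{ball}(b,\varepsilon))$ for every $b\in D$. Since a functional has the same supremum on a set and on its closed convex hull, each such slice contains two points at distance at least $\varepsilon$. Hence $J+h$ has no strong minimum for any $h$ in a whole ball of $X^*$, which contradicts the density of $\mathcal{G}$.
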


\subsection{Proof of Lemma \ref{lem:existence_uniqueness}}

We now restate and rigorously prove the existence and uniqueness lemma.

\begin{lemma}[Existence and Uniqueness] 
Let $X$ be a Banach space, and let $\mathcal{K} \subset X$ be a compact, convex subset. Let $J: \mathcal{K} \to \mathbb{R}$ be a lower-bounded lower semicontinuous functional. Consider the perturbed functional $\mathcal{E}_{\tau}(\phi) = J(\phi) + \langle h, \phi \rangle$ defined on $\mathcal{K}$. Then there exists a dense $G_\delta$ subset $\mathcal{G}$ of $X^*$ such that for every perturbation $h \in \mathcal{G}$, the functional $\mathcal{E}_{\tau}$ admits a unique global minimizer $\phi^*$ in $\mathcal{K}$.
\end{lemma}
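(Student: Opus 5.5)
The plan is to read the lemma off Stegall's Variational Principle (Theorem \ref{thm:stegall}), with Rieffel's Proposition \ref{prop:rieffel} verifying its geometric hypothesis, and then to turn the resulting strong minimum into a unique global minimizer through Proposition \ref{prop:strict_min}. The argument has three steps.

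\textbf{Step 1: checking the hypotheses of Theorem \ref{thm:stegall}.} Take $\mathcal{C} = \mathcal{K}$. I need $\mathcal{K}$ to be closed, bounded and convex.
\begin{itemize}
\item Convexity is assumed.
\item Compactness in the Banach space $X$ gives closedness, since compact sets in metric spaces are closed.
\item Compactness also gives boundedness, since compact sets are totally bounded.
\end{itemize}
Next, every nonempty subset $A \subset \mathcal{K}$ is relatively norm compact, because its closure is a closed subset of the compact set $\mathcal{K}$. Proposition \ref{prop:rieffel} then says each such $A$ is dentable. So the ``only if'' direction of Theorem \ref{thm:stegall} applies. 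For the given lower-bounded, lower semicontinuous $J:\mathcal{K}\to\mathbb{R}$, it yields a dense $G_\delta$ set $\mathcal{G}\subset X^*$ such that $J+h$ attains a strong minimum on $\mathcal{K}$ for every $h\in\mathcal{G}$.

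\textbf{Step 2: translating the conclusion into the lemma's notation.} Fix $h\in\mathcal{G}$. Then $\mathcal{E}_\tau(\phi) = J(\phi) + \langle h,\phi\rangle$ has a strong minimizer $\phi^*\in\mathcal{K}$. This means two things:
\begin{itemize}
\item $\mathcal{E}_\tau(\phi^*) = \inf_{\mathcal{K}} \mathcal{E}_\tau$, so the infimum is attained and a global minimizer exists;
\item every minimizing sequence converges in norm to $\phi^*$.
\end{itemize}
The definition of strong minimizer is stated for functions on all of $X$. To match it, I extend $\mathcal{E}_\tau$ by $+\infty$ outside $\mathcal{K}$. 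This extension is still lower semicontinuous because $\mathcal{K}$ is closed, and it does not change the infimum or the set of minimizing sequences.

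\textbf{Step 3: uniqueness.} By Proposition \ref{prop:strict_min}, a strong minimizer is strict: $\mathcal{E}_\tau(\phi) > \mathcal{E}_\tau(\phi^*)$ for all $\phi\neq\phi^*$. So $\phi^*$ is the unique global minimizer. The one-line reason is this: if $\psi$ were another minimizer, the constant sequence $\psi,\psi,\dots$ would be minimizing, so $\|\psi-\phi^*\| \to 0$, which forces $\psi = \phi^*$.

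\textbf{Main obstacle.} The delicate point is making Stegall's principle apply in exactly the stated form. The pairing $\langle h,\phi\rangle$ must be a continuous linear functional in $X^*$. The application in Theorem \ref{thm:binary_enforcement} uses $X = L^1$ and $h\in L^\infty = (L^1)^*$, which is consistent with this. I also rely on the version of the principle in which $J$ is defined only on $\mathcal{C}$, as in \cite{Lassonde_2009}, rather than on all of $X$. I would cite that version directly and note the $+\infty$ extension only as a consistency check.
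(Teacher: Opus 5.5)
Your proposal is correct and follows the paper's proof step for step. Rieffel's proposition gives dentability of $\mathcal{K}$ and its subsets, Stegall's principle yields the dense $G_\delta$ set of perturbations whose perturbed functionals attain a strong minimum, and strict minimality of strong minimizers gives uniqueness; your extra checks (closedness, boundedness, relative compactness of every subset, and the $+\infty$ extension) are slightly more careful than the paper's version but do not change the argument.
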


\begin{proof}
Since $\mathcal{K}$ is a compact convex subset of the Banach space $X$. It follows directly from Proposition \ref{prop:rieffel} that $\mathcal{K}$ is dentable.

Because $\mathcal{K}$ is a closed, bounded, convex set whose subsets are all dentable, and the unperturbed functional $J$ is lower-bounded and lower semicontinuous on $\mathcal{K}$, the conditions for Theorem \ref{thm:stegall} are perfectly satisfied.

Invoking Theorem \ref{thm:stegall}, there exists a dense $G_\delta$ set $\mathcal{G}$ in the dual space $X^*$ such that for all continuous linear perturbations $h \in \mathcal{G}$, the perturbed functional $\mathcal{E}_{\tau}(\phi) = J(\phi) + \langle h, \phi \rangle$ attains a strong minimum on $\mathcal{K}$.

Finally, by Proposition \ref{prop:strict_min}, any strong minimizer is strictly minimal and thus unique. Therefore, for every $h \in \mathcal{G}$, the global minimizer $\phi^*$ of the functional $\mathcal{E}_{\tau}$ is strictly unique, completing the proof.
\end{proof}

\section{Proof of Energy Stability under Volume Constraint}
\label{app:volume_stability}

In this appendix, we provide the rigorous mathematical proof that the unconditional energy stability of our quantile filter framework is preserved under the global volume constraint, as discussed in Section \ref{subsec:stability_stokes}.

\begin{proposition}[Energy Stability under Volume Constraint]
\label{prop:volume_stability}
Let $V_{target} = \int_{\Omega} \phi^0 d\mathbf{x}$. Let $\{(\phi^k, \mathbf{c}^k)\}$ be the sequence generated by the shifted thresholding rule \eqref{eq:scheme_augmented}, where $\Lambda^k$ is chosen such that $\int_{\Omega} \phi^{k+1} d\mathbf{x} = V_{target}$. Then, the sequence is unconditionally energy stable:
\begin{equation}
    \mathcal{E}_\tau(\phi^{k+1}, \mathbf{c}^{k+1}) \leq \mathcal{E}_\tau(\phi^k, \mathbf{c}^k).
\end{equation}
\end{proposition}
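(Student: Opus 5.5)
The plan is to mirror the proof of Theorem \ref{thm:unconditional_stability}, replacing the unconstrained minimization by a minimization over the volume-constrained admissible set
\[
\mathcal{A} := \Big\{ \phi \in BV(\Omega;[0,1]) : \int_\Omega \phi \, d\mathbf{x} = V_{target} \Big\}.
\]
The argument has two steps. First, the parameter update $\mathbf{c}^{k+1} = \arg\min_{\mathbf{c}} \mathcal{E}_\tau(\phi^k,\mathbf{c})$ gives $\mathcal{E}_\tau(\phi^k,\mathbf{c}^{k+1}) \le \mathcal{E}_\tau(\phi^k,\mathbf{c}^k)$ exactly as in \eqref{eq:decay_c}. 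Second, I will show that $\phi^{k+1}$ produced by \eqref{eq:scheme_augmented} with the root-found $\Lambda^k$ is a global minimizer of $\mathcal{E}_\tau(\cdot,\mathbf{c}^{k+1}) + \mathcal{M}_k(\cdot)$ over $\mathcal{A}$. Since $\phi^k \in \mathcal{A}$ (by induction, starting from $V_{target} = \int_\Omega \phi^0$), the chain
\[
\mathcal{E}_\tau(\phi^{k+1}) \le \mathcal{E}_\tau(\phi^{k+1}) + \mathcal{M}_k(\phi^{k+1}) \le \mathcal{E}_\tau(\phi^k) + \mathcal{M}_k(\phi^k) = \mathcal{E}_\tau(\phi^k)
\]
then goes through verbatim, using $\mathcal{M}_k \ge 0$ and $\mathcal{M}_k(\phi^k)=0$.

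For the second step I will use a Lagrangian / Karush--Kuhn--Tucker argument that exploits the convexity of the pointwise problems. By the Exact Iterative Scheme theorem, the unconstrained surrogate decouples into the convex pointwise potentials $J_{\mathbf{x}}(\xi)$. Adding $\frac{\tilde\lambda}{2}\cdot\frac{2}{\tilde\lambda}\Lambda\,\xi$, suitably normalized, shifts the derivative $2(T(\mathbf{x}) - S(\xi))$ to $2(T(\mathbf{x}) + \Lambda - S(\xi))$. The same case analysis as in that proof then shows that the minimizer of
\[
\int_\Omega J_{\mathbf{x}}(\phi(\mathbf{x})) \, d\mathbf{x} + \Lambda \int_\Omega \phi \, d\mathbf{x}
\]
over $BV(\Omega;[0,1])$ is exactly \eqref{eq:scheme_augmented}, with the normalization of $\Lambda$ absorbed into the offset $\Lambda^k$. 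With $\Lambda = \Lambda^k$ chosen so that $\int_\Omega \phi^{k+1} = V_{target}$, the standard Lagrangian sufficiency argument applies: for any $\psi \in \mathcal{A}$,
\[
\text{Surr}(\phi^{k+1}) = \text{Surr}(\phi^{k+1}) + \Lambda^k\Big(\int_\Omega \phi^{k+1} - V_{target}\Big) \le \text{Surr}(\psi) + \Lambda^k\Big(\int_\Omega \psi - V_{target}\Big) = \text{Surr}(\psi).
\]
Hence $\phi^{k+1}$ minimizes the surrogate over $\mathcal{A}$. This step does not require convexity of the overall problem, only the existence of a multiplier, so it is clean once the exact volume match is available.

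The main obstacle I expect is justifying that a $\Lambda^k$ achieving exact equality $\int_\Omega \phi^{k+1}(\Lambda) = V_{target}$ exists. The map $\Lambda \mapsto \int_\Omega \phi^{k+1}(\Lambda)$ is non-increasing, since $\phi^{k+1}(\mathbf{x};\Lambda)$ is a supremum over a set that shrinks as $\Lambda$ grows. Its limits are $|\Omega|$ and $0$ as $\Lambda \to \mp\infty$, but it could jump where $S(\xi)$ has flat pieces or atoms. I would first prove continuity: jumps occur only if, on a positive-measure set of $\mathbf{x}$, $S(\cdot)$ equals $T(\mathbf{x}) + \Lambda$ on an interval. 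At such a jump, the argmin set of the shifted pointwise problems is an interval $[\underline\xi(\mathbf{x}),\overline\xi(\mathbf{x})]$, and by convexity any measurable selection $\phi(\mathbf{x})$ from it is still a minimizer of the Lagrangian. One can therefore interpolate between the left and right limits to hit $V_{target}$ exactly. I will interpret the proposition as allowing this selection, consistent with its hypothesis that $\Lambda^k$ is chosen so the volume is matched. Given that, the inequality follows as above.
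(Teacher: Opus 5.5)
Your proposal is correct and follows essentially the same route as the paper: Lagrangian sufficiency shows $\phi^{k+1}$ minimizes $\mathcal{E}_\tau(\cdot,\mathbf{c}^{k+1})+\mathcal{M}_k$ over the volume-constrained set, and then the chain using $\mathcal{M}_k\ge 0$, $\mathcal{M}_k(\phi^k)=0$ and the $\mathbf{c}$-minimization step gives the result. Your extra analysis of when an exact volume-matching $\Lambda^k$ exists, via selection from the argmin interval at jumps, goes beyond the paper, which takes this from the hypothesis and the bisection step.
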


\begin{proof}
Define the volume-constrained admissible set $\mathcal{V}_{ad} = \{ \phi \in BV(\Omega; [0,1]) : \int_{\Omega} \phi d\mathbf{x} = V_{target} \}$. At iteration $k+1$ (with fixed $\mathbf{c}^{k+1}$), we aim to minimize the alternative energy over $\mathcal{V}_{ad}$. 

Introducing the Lagrange multiplier $\Lambda$, the unconstrained Lagrangian is:
\begin{equation}
    \mathcal{L}_k(\phi, \Lambda) = \mathcal{E}_\tau(\phi, \mathbf{c}^{k+1}) + \mathcal{M}_k(\phi) + \Lambda \left( \int_{\Omega} \phi d\mathbf{x} - V_{target} \right).
\end{equation}

Because $\mathcal{L}_k(\phi, \Lambda)$ is linear in $\phi$ without spatial derivatives, its minimization decouples pointwise. As derived in Section \ref{subsubsec:derivation}, the pointwise minimizer $\phi_\Lambda$ for a fixed $\Lambda$ is exactly given by \eqref{eq:scheme_augmented}.

Our bisection root-finding step guarantees an optimal multiplier $\Lambda^k$ such that the minimizer $\phi^{k+1} := \phi_{\Lambda^k}$ exactly satisfies the volume constraint, ensuring $\phi^{k+1} \in \mathcal{V}_{ad}$.

Since $\phi^{k+1}$ globally minimizes $\mathcal{L}_k(\cdot, \Lambda^k)$, for any competing state $\phi \in \mathcal{V}_{ad}$, we have:
\begin{equation}
    \mathcal{L}_k(\phi^{k+1}, \Lambda^k) \leq \mathcal{L}_k(\phi, \Lambda^k).
\end{equation}

Expanding this Lagrangian and noting that the penalty term $\Lambda^k ( \int_{\Omega} \phi d\mathbf{x} - V_{target} )$ strictly vanishes for any function in $\mathcal{V}_{ad}$, the inequality simplifies directly to:
\begin{equation}
    \mathcal{E}_\tau(\phi^{k+1}, \mathbf{c}^{k+1}) + \mathcal{M}_k(\phi^{k+1}) \leq \mathcal{E}_\tau(\phi, \mathbf{c}^{k+1}) + \mathcal{M}_k(\phi), \quad \forall \phi \in \mathcal{V}_{ad}.
\end{equation}
This establishes that $\phi^{k+1}$ is the exact global minimizer of the alternative energy over $\mathcal{V}_{ad}$. 

Finally, since the previous state $\phi^k \in \mathcal{V}_{ad}$ is a valid test function, we conclude with the following inequalities:
\begin{equation}
\begin{aligned}
\mathcal{E}_\tau(\phi^{k+1}, \mathbf{c}^{k+1}) &\leq \mathcal{E}_\tau(\phi^{k+1}, \mathbf{c}^{k+1}) + \mathcal{M}_k(\phi^{k+1}) \quad &(\text{Non-negativity of } \mathcal{M}_k) \\
&\leq \mathcal{E}_\tau(\phi^{k}, \mathbf{c}^{k+1}) + \mathcal{M}_k(\phi^{k}) \quad &(\text{Minimality of } \phi^{k+1} \text{ over } \mathcal{V}_{ad}) \\
&= \mathcal{E}_\tau(\phi^{k}, \mathbf{c}^{k+1}) \quad &(\text{Contact property } \mathcal{M}_k(\phi^k) = 0) \\
&\leq \mathcal{E}_\tau(\phi^{k}, \mathbf{c}^{k}). \quad &(\text{Minimality of } \mathbf{c}^{k+1})
\end{aligned}
\end{equation}
This confirms the strict energy dissipation of the volume-constrained scheme.
\end{proof}

\bibliographystyle{siamplain}
\bibliography{references}
\end{document}

%% file: ex_shared.tex
\usepackage{lipsum}
\usepackage{amsfonts}
\usepackage{graphicx}
\usepackage{epstopdf}
\usepackage{algorithmic}
\ifpdf
  \DeclareGraphicsExtensions{.eps,.pdf,.png,.jpg}
\else
  \DeclareGraphicsExtensions{.eps}
\fi

\newsiamremark{remark}{Remark}
\newsiamremark{hypothesis}{Hypothesis}
\crefname{hypothesis}{Hypothesis}{Hypotheses}
\newsiamthm{claim}{Claim}
\newsiamremark{fact}{Fact}
\crefname{fact}{Fact}{Facts}
\usepackage{lineno}
\usepackage{cite}

\headers{weighted quantile filter based framework}{Sihao Cheng, Ziming Shao, and Dong Wang}

\title{A weighted quantile filter based framework for interface optimal design problems\thanks{This work is partially supported by the National Natural Science Foundation of China (Grant No. 12422116), Shenzhen Science and Technology Innovation Program (Grant Nos. QNXMA20250701091827037, RCYX20221008092843046),
and Hetao Shenzhen-Hong Kong Science and Technology Innovation Cooperation Zone Project (No. HZQSWS-KCCYB-2024016).}}

\author{Sihao Cheng\thanks{School of Science and Engineering, The Chinese University of Hong Kong (Shenzhen), Shenzhen, Guangdong 518172, China (\email{sihaocheng@link.cuhk.edu.cn}).}
\and Ziming Shao\thanks{School of Science and Engineering, The Chinese University of Hong Kong (Shenzhen), Shenzhen, Guangdong 518172, China (\email{zimingshao@link.cuhk.edu.cn}).}
\and Dong Wang\thanks{School of Science and Engineering, The Chinese University of Hong Kong (Shenzhen), Shenzhen, Guangdong 518172, China \& Shenzhen International Center for Industrial and Applied Mathematics, Shenzhen Research Institute of Big Data, Guangdong 518172, China \& Shenzhen Loop Area Institute, Guangdong 518048, China (\email{wangdong@cuhk.edu.cn}).}}

\usepackage{amsopn}


%% file: references.bib
@article{Allaire_2004, title={Structural optimization using sensitivity analysis and a level-set method}, volume={194}, ISSN={0021-9991}, DOI={10.1016/j.jcp.2003.09.032}, number={1}, journal={Journal of Computational Physics}, publisher={Elsevier BV}, author={Allaire, Grégoire and Jouve, François and Toader, Anca-Maria}, year={2004}, month=Feb, pages={363–393} }

@inbook{Ambrosio_2000, title={Functions of Bounded Variation}, ISBN={9781383020311}, DOI={10.1093/oso/9780198502456.003.0003}, booktitle={Functions of Bounded Variation and Free Discontinuity Problems}, publisher={Oxford University PressOxford}, author={Ambrosio, Luigi and Fusco, Nicola and Pallara, Diego}, year={2000}, month=Mar, pages={116–210} }

@article{Bendsoe_1989, title={Optimal shape design as a material distribution problem}, volume={1}, ISSN={1615-1488}, DOI={10.1007/bf01650949}, number={4}, journal={Structural Optimization}, publisher={Springer Science and Business Media LLC}, author={Bendsøe, M. P.}, year={1989}, month=Dec, pages={193–202} }

@article{Borrvall_2002, title={Topology optimization of fluids in Stokes flow}, volume={41}, ISSN={1097-0363}, DOI={10.1002/fld.426}, number={1}, journal={International Journal for Numerical Methods in Fluids}, publisher={Wiley}, author={Borrvall, Thomas and Petersson, Joakim}, year={2002}, month=Dec, pages={77–107} }

@article{Chan_2001, title={Active contours without edges}, volume={10}, ISSN={1057-7149}, DOI={10.1109/83.902291}, number={2}, journal={IEEE Transactions on Image Processing}, publisher={Institute of Electrical and Electronics Engineers (IEEE)}, author={Chan, T.F. and Vese, L.A.}, year={2001}, pages={266–277} }

@article{Chan_2006, title={Algorithms for Finding Global Minimizers of  Image Segmentation and Denoising Models}, volume={66}, ISSN={1095-712X}, DOI={10.1137/040615286}, number={5}, journal={SIAM Journal on Applied Mathematics}, publisher={Society for Industrial & Applied Mathematics (SIAM)}, author={Chan, Tony F. and Esedoglu, Selim and Nikolova, Mila}, year={2006}, month=Jan, pages={1632–1648} }

@article{Chen_2024, title={A prediction-correction based iterative convolution-thresholding method for topology optimization of heat transfer problems}, volume={511}, ISSN={0021-9991}, DOI={10.1016/j.jcp.2024.113119}, journal={Journal of Computational Physics}, publisher={Elsevier BV}, author={Chen, Huangxin and Dong, Piaopiao and Wang, Dong and Wang, Xiao-Ping}, year={2024}, month=Aug, pages={113119} }

@article{Chen_2026, title={A robust and stable phase field method for structural topology optimization}, volume={547}, ISSN={0021-9991}, DOI={10.1016/j.jcp.2025.114531}, journal={Journal of Computational Physics}, publisher={Elsevier BV}, author={Chen, Huangxin and Dong, Piaopiao and Wang, Dong and Wang, Xiao-Ping}, year={2026}, month=Feb, pages={114531} }

@article{Chen_2022, title={An Efficient Threshold Dynamics Method for Topology Optimization for Fluids}, volume={3}, ISSN={2708-0579}, DOI={10.4208/csiam-am.so-2021-0007}, number={1}, journal={CSIAM Transactions on Applied Mathematics}, publisher={Global Science Press}, author={Chen, Huangxin and Leng, Haitao and Wang, Dong and Wang, Xiao-Ping}, year={2022}, month=Jun, pages={26–56} }

@article{Chen_2014, title={A one-domain approach for modeling and simulation of free fluid over a porous medium}, volume={259}, ISSN={0021-9991}, DOI={10.1016/j.jcp.2013.12.008}, journal={Journal of Computational Physics}, publisher={Elsevier BV}, author={Chen, Huangxin and Wang, Xiao-Ping}, year={2014}, month=Feb, pages={650–671} }

@article{Choi_2022, title={Second-order stabilized semi-implicit energy stable schemes for bubble assemblies in binary and ternary systems}, volume={27}, ISSN={1553-524X}, DOI={10.3934/dcdsb.2021246}, number={8}, journal={Discrete and Continuous Dynamical Systems - B}, publisher={American Institute of Mathematical Sciences (AIMS)}, author={Choi, Hyunjung and Zhao, Yanxiang}, year={2022}, pages={4649} }

@article{Deckelnick_2005, title={Computation of geometric partial differential equations and mean curvature flow}, volume={14}, ISSN={1474-0508}, DOI={10.1017/s0962492904000224}, journal={Acta Numerica}, publisher={Cambridge University Press (CUP)}, author={Deckelnick, Klaus and Dziuk, Gerhard and Elliott, Charles M.}, year={2005}, month=Apr, pages={139–232} }

@inbook{Du_2020, title={The phase field method for geometric moving interfaces and their numerical approximations}, ISBN={9780444640031}, ISSN={1570-8659}, DOI={10.1016/bs.hna.2019.05.001}, booktitle={Geometric Partial Differential Equations - Part I}, publisher={Elsevier}, author={Qiang Du and Xiaobing Feng}, year={2020}, pages={425–508} }

@article{Esedog_lu_2010, title={Diffusion generated motion using signed distance functions}, volume={229}, ISSN={0021-9991}, DOI={10.1016/j.jcp.2009.10.002}, number={4}, journal={Journal of Computational Physics}, publisher={Elsevier BV}, author={Esedoḡlu, Selim and Ruuth, Steven and Tsai, Richard}, year={2010}, month=Feb, pages={1017–1042} }

@article{Esedo_Lu_2014, title={Threshold Dynamics for Networks with Arbitrary Surface Tensions}, volume={68}, ISSN={1097-0312}, DOI={10.1002/cpa.21527}, number={5}, journal={Communications on Pure and Applied Mathematics}, publisher={Wiley}, author={Esedoḡlu, Selim and Otto, Felix}, year={2014}, month=Jun, pages={808–864} }

@article{Esedo_lu_2024, title={On median filters for motion by mean curvature}, ISSN={1088-6842}, DOI={10.1090/mcom/3940}, journal={Mathematics of Computation}, publisher={American Mathematical Society (AMS)}, author={Esedoḡlu, Selim and Guo, Jiajia and Li, David}, year={2024}, month=Feb }

@article{Feppon_2020, title={Topology optimization of thermal fluid–structure systems using body-fitted meshes and parallel computing}, volume={417}, ISSN={0021-9991}, DOI={10.1016/j.jcp.2020.109574}, journal={Journal of Computational Physics}, publisher={Elsevier BV}, author={Feppon, F. and Allaire, G. and Dapogny, C. and Jolivet, P.}, year={2020}, month=Sep, pages={109574} }

@article{Guo_2025, title={Median Filters for Anisotropic Wetting / Dewetting Problems}, volume={47}, ISSN={1095-7197}, DOI={10.1137/24m1670755}, number={3}, journal={SIAM Journal on Scientific Computing}, publisher={Society for Industrial & Applied Mathematics (SIAM)}, author={Guo, Jiajia and Esedoḡlu, Selim}, year={2025}, month=Jun, pages={A2012–A2039} }

@article{Hartmann_2010, title={The constrained reinitialization equation for level set methods}, volume={229}, ISSN={0021-9991}, DOI={10.1016/j.jcp.2009.10.042}, number={5}, journal={Journal of Computational Physics}, publisher={Elsevier BV}, author={Hartmann, Daniel and Meinke, Matthias and Schröder, Wolfgang}, year={2010}, month=Mar, pages={1514–1535} }

@article{Hirt_1981, title={Volume of fluid (VOF) method for the dynamics of free boundaries}, volume={39}, ISSN={0021-9991}, DOI={10.1016/0021-9991(81)90145-5}, number={1}, journal={Journal of Computational Physics}, publisher={Elsevier BV}, author={Hirt, C.W and Nichols, B.D}, year={1981}, month=Jan, pages={201–225} }

@article{Jacqmin_1999, title={Calculation of Two-Phase Navier–Stokes Flows Using Phase-Field Modeling}, volume={155}, ISSN={0021-9991}, DOI={10.1006/jcph.1999.6332}, number={1}, journal={Journal of Computational Physics}, publisher={Elsevier BV}, author={Jacqmin, David}, year={1999}, month=Oct, pages={96–127} }

@article{Lassonde_2009, title={Asplund Spaces, Stegall Variational Principle and the RNP}, volume={17}, ISSN={1877-0541}, DOI={10.1007/s11228-009-0111-6}, number={2}, journal={Set-Valued and Variational Analysis}, publisher={Springer Science and Business Media LLC}, author={Lassonde, Marc}, year={2009}, month=May, pages={183–193} }

@article{Laux_2018, title={Convergence of the Allen‐Cahn Equation to Multiphase Mean Curvature Flow}, volume={71}, ISSN={1097-0312}, DOI={10.1002/cpa.21747}, number={8}, journal={Communications on Pure and Applied Mathematics}, publisher={Wiley}, author={Laux, Tim and Simon, Theresa M.}, year={2018}, month=Apr, pages={1597–1647} }

@article{Chunming_Li_2008, title={Minimization of Region-Scalable Fitting Energy for Image Segmentation}, volume={17}, ISSN={1057-7149}, DOI={10.1109/tip.2008.2002304}, number={10}, journal={IEEE Transactions on Image Processing}, publisher={Institute of Electrical and Electronics Engineers (IEEE)}, author={Chunming Li and Chiu-Yen Kao and Gore, J.C. and Zhaohua Ding}, year={2008}, month=Oct, pages={1940–1949} }

@inproceedings{Li_2007, title={Implicit Active Contours Driven by Local Binary Fitting Energy}, DOI={10.1109/cvpr.2007.383014}, booktitle={2007 IEEE Conference on Computer Vision and Pattern Recognition}, publisher={IEEE}, author={Li, Chunming and Kao, Chiu-Yen and Gore, John C. and Ding, Zhaohua}, year={2007}, month=Jun, pages={1–7} }

@article{Li_2026, title={High-Order Unconditionally Energy-Stable Decoupled Method for Phase Field Modeling of Pitting Corrosion with Adaptive Implementation}, volume={18}, ISSN={2070-0733}, DOI={10.4208/aamm.oa-2025-0232}, number={5}, journal={Advances in Applied Mathematics and Mechanics}, publisher={Global Science Press}, author={Li, Futuan and Li, Hongwei and Tang, Tao and Yang, Jiang}, year={2026}, month=Jan, pages={1355–1388} }

@book{Lieb_2001, title={Analysis}, ISBN={9781470411435}, ISSN={1065-7339}, DOI={10.1090/gsm/014}, journal={Graduate Studies in Mathematics}, publisher={American Mathematical Society}, author={Lieb, Elliott and Loss, Michael}, year={2001}, month=Mar }

@article{Liu_2022, title={Two-Phase Segmentation for Intensity Inhomogeneous Images by the Allen--Cahn Local Binary Fitting Model}, volume={44}, ISSN={1095-7197}, DOI={10.1137/21m1421830}, number={1}, journal={SIAM Journal on Scientific Computing}, publisher={Society for Industrial & Applied Mathematics (SIAM)}, author={Liu, Chaoyu and Qiao, Zhonghua and Zhang, Qian}, year={2022}, month=Jan, pages={B177–B196} }

@article{Liu_2023, title={An Active Contour Model with Local Variance Force Term and Its Efficient Minimization Solver for Multiphase Image Segmentation}, volume={16}, ISSN={1936-4954}, DOI={10.1137/22m1483645}, number={1}, journal={SIAM Journal on Imaging Sciences}, publisher={Society for Industrial & Applied Mathematics (SIAM)}, author={Liu, Chaoyu and Qiao, Zhonghua and Zhang, Qian}, year={2023}, month=Feb, pages={144–168} }

@article{Loewen_2001, title={A Generalized Variational Principle}, volume={53}, ISSN={1496-4279}, DOI={10.4153/cjm-2001-044-8}, number={6}, journal={Canadian Journal of Mathematics}, publisher={Canadian Mathematical Society}, author={Loewen, Philip D. and Wang, Xianfu}, year={2001}, month=Dec, pages={1174–1193} }

@article{Luo_2024, title={Asymptotically compatible schemes for nonlocal Ohta–Kawasaki model}, volume={40}, ISSN={1098-2426}, DOI={10.1002/num.23143}, number={6}, journal={Numerical Methods for Partial Differential Equations}, publisher={Wiley}, author={Luo, Wangbo and Zhao, Yanxiang}, year={2024}, month=Aug }

@article{Ma_2021, title={A Characteristic Function-Based Algorithm for Geodesic Active Contours}, volume={14}, ISSN={1936-4954}, DOI={10.1137/20m1382817}, number={3}, journal={SIAM Journal on Imaging Sciences}, publisher={Society for Industrial & Applied Mathematics (SIAM)}, author={Ma, Jun and Wang, Dong and Wang, Xiao-Ping and Yang, Xiaoping}, year={2021}, month=Jan, pages={1184–1205} }

@article{Merriman_1994, title={Motion of Multiple Junctions: A Level Set Approach}, volume={112}, ISSN={0021-9991}, DOI={10.1006/jcph.1994.1105}, number={2}, journal={Journal of Computational Physics}, publisher={Elsevier BV}, author={Merriman, Barry and Bence, James K. and Osher, Stanley J.}, year={1994}, month=Jun, pages={334–363} }

@article{miranda2007short,
  title = {Short-Time Heat Flow and Functions of Bounded Variation in {{R N}}},
  author = {Miranda, Michele and Pallara, Diego and Paronetto, Fabio and Preunkert, Marc},
  year = 2008,
  month = dec,
  journal = {Annales de la Facult\'e des sciences de Toulouse : Math\'ematiques},
  volume = {16},
  number = {1},
  pages = {125--145},
  issn = {2258-7519},
  doi = {10.5802/afst.1142},
  urldate = {2026-02-04},
  langid = {english}
}

@article{Mumford_1989, title={Optimal approximations by piecewise smooth functions and associated variational problems}, volume={42}, ISSN={1097-0312}, DOI={10.1002/cpa.3160420503}, number={5}, journal={Communications on Pure and Applied Mathematics}, publisher={Wiley}, author={Mumford, David and Shah, Jayant}, year={1989}, month=Jul, pages={577–685} }

@article{Oberman_2004, title={A convergent monotone difference scheme for motion of level sets by mean curvature}, volume={99}, ISSN={0945-3245}, DOI={10.1007/s00211-004-0566-1}, number={2}, journal={Numerische Mathematik}, publisher={Springer Science and Business Media LLC}, author={Oberman, Adam M.}, year={2004}, month=Nov, pages={365–379} }

@article{Osher_1988, title={Fronts propagating with curvature-dependent speed: Algorithms based on Hamilton-Jacobi formulations}, volume={79}, ISSN={0021-9991}, DOI={10.1016/0021-9991(88)90002-2}, number={1}, journal={Journal of Computational Physics}, publisher={Elsevier BV}, author={Osher, Stanley and Sethian, James A}, year={1988}, month=Nov, pages={12–49} }

@article{Osting_2019, title={A diffusion generated method for orthogonal matrix-valued fields}, volume={89}, ISSN={0025-5718}, DOI={10.1090/mcom/3473}, number={322}, journal={Mathematics of Computation}, publisher={American Mathematical Society (AMS)}, author={Osting, Braxton and Wang, Dong}, year={2019}, month=Sep, pages={515–550} }

@inproceedings{rieffel1967dentable,
  title={Dentable subsets of Banach spaces, with applications to a Radon-Nikodym theorem},
  author={Rieffel, MA},
  booktitle={Proc. Conf. Functional Analysis, Thompson Book Co., Washington, DC},
  pages={71--77},
  year={1967}
}

@article{merriman1992diffusion, title={Diffusion-Generated Motion by Mean Curvature for Filaments}, volume={11}, ISSN={1432-1467}, DOI={10.1007/s00332-001-0404-x}, number={6}, journal={Journal of Nonlinear Science}, publisher={Springer Science and Business Media LLC}, author={Ruuth, S. J. and Merriman, B. and Xin, J. and Osher, S.}, year={2001}, month=Dec, pages={473–493} }

@inbook{saye2020review, title={A Review of Level Set Methods to Model Interfaces Moving under Complex Physics: {{Recent}} Challenges and Advances}, ISBN={9780444640031}, ISSN={1570-8659}, DOI={10.1016/bs.hna.2019.07.003}, booktitle={Geometric Partial Differential Equations - Part I}, publisher={Elsevier}, author={Saye, Robert and Sethian, James A.}, year={2020}, pages={509–554} }

@article{stegall1978optimization, title={Optimization of functions on certain subsets of Banach spaces}, volume={236}, ISSN={1432-1807}, url={http://dx.doi.org/10.1007/bf01351389}, DOI={10.1007/bf01351389}, number={2}, journal={Mathematische Annalen}, publisher={Springer Science and Business Media LLC}, author={Stegall, Charles}, year={1978}, month=Jun, pages={171–176} }

@article{Sussman_1994, title={A Level Set Approach for Computing Solutions to Incompressible Two-Phase Flow}, volume={114}, ISSN={0021-9991}, DOI={10.1006/jcph.1994.1155}, number={1}, journal={Journal of Computational Physics}, publisher={Elsevier BV}, author={Sussman, Mark and Smereka, Peter and Osher, Stanley}, year={1994}, month=Sep, pages={146–159} }

@article{Takezawa_2010, title={Shape and topology optimization based on the phase field method and sensitivity analysis}, volume={229}, ISSN={0021-9991}, DOI={10.1016/j.jcp.2009.12.017}, number={7}, journal={Journal of Computational Physics}, publisher={Elsevier BV}, author={Takezawa, Akihiro and Nishiwaki, Shinji and Kitamura, Mitsuru}, year={2010}, month=Apr, pages={2697–2718} }

@article{van_Gennip_2014, title={Mean Curvature, Threshold Dynamics, and Phase Field Theory on Finite Graphs}, volume={82}, ISSN={1424-9294}, DOI={10.1007/s00032-014-0216-8}, number={1}, journal={Milan Journal of Mathematics}, publisher={Springer Science and Business Media LLC}, author={van Gennip, Yves and Guillen, Nestor and Osting, Braxton and Bertozzi, Andrea L.}, year={2014}, month=Apr, pages={3–65} }

@article{Wang_2022a, title={An Efficient Unconditionally Stable Method for Dirichlet Partitions in Arbitrary Domains}, volume={44}, ISSN={1095-7197}, DOI={10.1137/21m1443406}, number={4}, journal={SIAM Journal on Scientific Computing}, publisher={Society for Industrial & Applied Mathematics (SIAM)}, author={Wang, Dong}, year={2022}, month=Jul, pages={A2061–A2088} }

@article{Wang_2017, title={An efficient iterative thresholding method for image segmentation}, volume={350}, ISSN={0021-9991}, DOI={10.1016/j.jcp.2017.08.020}, journal={Journal of Computational Physics}, publisher={Elsevier BV}, author={Wang, Dong and Li, Haohan and Wei, Xiaoyu and Wang, Xiao-Ping}, year={2017}, month=Dec, pages={657–667} }

@article{Wang_2019a, title={Interface Dynamics for an Allen--Cahn-Type Equation Governing a Matrix-Valued Field}, volume={17}, ISSN={1540-3467}, DOI={10.1137/19m1250595}, number={4}, journal={Multiscale Modeling \& Simulation}, publisher={Society for Industrial & Applied Mathematics (SIAM)}, author={Wang, Dong and Osting, Braxton and Wang, Xiao-Ping}, year={2019}, month=Jan, pages={1252–1273} }

@article{Wang_2022b, title={The iterative convolution–thresholding method ({ICTM}) for image segmentation}, volume={130}, ISSN={0031-3203}, DOI={10.1016/j.patcog.2022.108794}, journal={Pattern Recognition}, publisher={Elsevier BV}, author={Wang, Dong and Wang, Xiao-Ping}, year={2022}, month=Oct, pages={108794} }

@article{Wang_2019b, title={An improved threshold dynamics method for wetting dynamics}, volume={392}, ISSN={0021-9991}, DOI={10.1016/j.jcp.2019.04.037}, journal={Journal of Computational Physics}, publisher={Elsevier BV}, author={Wang, Dong and Wang, Xiao-Ping and Xu, Xianmin}, year={2019}, month=Sep, pages={291–310} }

@article{Xu_2017, title={An efficient threshold dynamics method for wetting on rough surfaces}, volume={330}, ISSN={0021-9991}, DOI={10.1016/j.jcp.2016.11.008}, journal={Journal of Computational Physics}, publisher={Elsevier BV}, author={Xu, Xianmin and Wang, Dong and Wang, Xiao-Ping}, year={2017}, month=Feb, pages={510–528} }

@article{Xu_2019, title={Energy Stable Semi-implicit Schemes for Allen–Cahn–Ohta–Kawasaki Model in Binary System}, volume={80}, ISSN={1573-7691}, DOI={10.1007/s10915-019-00993-4}, number={3}, journal={Journal of Scientific Computing}, publisher={Springer Science and Business Media LLC}, author={Xu, Xiang and Zhao, Yanxiang}, year={2019}, month=Jun, pages={1656–1680} }

@article{Zhang_2026, title={Up to Fourth-Order Structure-Preserving Diagonally Implicit Runge-Kutta Schemes for the Allen-Cahn-Type Equations}, volume={18}, ISSN={2070-0733}, DOI={10.4208/aamm.oa-2025-0119}, number={6}, journal={Advances in Applied Mathematics and Mechanics}, publisher={Global Science Press}, author={Zhang, Wei and Zhu, Xiaohan and Cai, Wenjun and Wang, Yushun}, year={2026}, month=Jun, pages={1914–1938} }
